                    \def\version{May 15, 2020}                       %
\def\@rmrk#1#2{\refstepcounter
    {#1}\@ifnextchar[{\@yrmrk{#1}{#2}}{\@xrmrk{#1}{#2}}}
\makeatletter\@addtoreset{equation}{section}\makeatother
 \newfont{\bfit}{cmbxti10 scaled 1200}
\renewcommand{\d}{{\rm d}}
\newcommand{\cC}{{\mathfrak C}}
\newcommand{\cF}{{\mathcal F}}
\newcommand{\cM}{{\mathcal M}}
\newcommand{\cW}{{\mathcal W}}
 \newcommand{\e}{{\rm e} }
 \newcommand{\eps}{\varepsilon}
 \newcommand{\R}{\mathbb{R}}
  \newcommand{\Q}{\mathbb{Q}}
 \newcommand{\N}{\mathbb{N}}
 \newcommand{\E}{\mathbb{E}}
 \renewcommand{\P}{\mathbb{P}}
 \newcommand{\weak}{\Rightarrow}
 \newcommand{\vague}{\hookrightarrow} 
 \def\1{{\mathchoice {1\mskip-4mu\mathrm l} 
{1\mskip-4mu\mathrm l}
{1\mskip-4.5mu\mathrm l} {1\mskip-5mu\mathrm l}}}
 \newcommand{\Mcal}{{\mathcal M}}
\newcommand{\heap}[2]{\genfrac{}{}{0pt}{}{#1}{#2}}
\newcommand{\ssup}[1] {{\scriptscriptstyle{({#1}})}}
\renewcommand{\subsection}{\secdef \subsct\sbsect}
\newcommand{\subsct}[2][default]{\refstepcounter{subsection}
\vspace{0.15cm}
{\flushleft\bf \arabic{section}.\arabic{subsection}~\bf #1  }
\nopagebreak\nopagebreak}
\newcommand{\sbsect}[1]{\vspace{0.1cm}\noindent
{\bf #1}\vspace{0.1cm}}
\newtheorem{theorem}{Theorem}[section]
\newtheorem{lemma}[theorem]{Lemma}
\newtheorem{cor}[theorem]{Corollary}
\newtheorem{prop}[theorem]{Proposition}
\newtheoremstyle{thm}{1.5ex}{1.5ex}{\itshape\rmfamily}{}
{\bfseries\rmfamily}{}{2ex}{}
\newtheoremstyle{rem}{1.3ex}{1.3ex}{\rmfamily}{}
{\itshape\rmfamily}{}{1.5ex}{}
\theoremstyle{rem}
\newtheorem{remark}{{\slshape\sffamily Remark}}[]
\def\thebibliography#1{\section*{References}
  \list%
  {\arabic{enumi}.}%                          {\star}{\star}{\star} style of reference number {\star}{\star}{\star}
    {\settowidth\labelwidth{[#1]}\leftmargin\labelwidth
    \advance\leftmargin\labelsep
    \parsep0pt\itemsep0pt
    \usecounter{enumi}}
    \def\newblock{\hskip .11em plus .33em minus .07em}
    \sloppy                   % \clubpenalty4000\widowpenalty4000
    \sfcode`\.=1000\relax}
\begin{document}
%%%%%%%%%%%%%%%%%%%%%%%%%%%%%%%%%%%%%%%%%%%%%%%
\title[Identification of the Polaron measure in strong coupling]
{\large Identification of the Polaron measure in strong coupling  and the Pekar variational formula}
\author[Chiranjib Mukherjee and S. R. S. Varadhan]{}
\maketitle
\thispagestyle{empty}
\vspace{-0.5cm}

\centerline{\sc  Chiranjib Mukherjee\footnote{University of M\"unster, Einsteinstrasse 62, M\"unster 48149, Germany, {\tt chiranjib.mukherjee@uni-muenster.de}}
and S. R. S. Varadhan \footnote{Courant Institute of Mathematical Sciences, 251 Mercer Street, New York, NY 10012, USA, {\tt varadhan@cims.nyu.edu}}}
\renewcommand{\thefootnote}{}
\footnote{\textit{AMS Subject
Classification:} 60J65, 60J55, 60F10.}
\footnote{\textit{Keywords:} Polaron problem, strong coupling limit, large deviations, Pekar process}

\vspace{-0.5cm}
\centerline{\textit{University of M\"unster and Courant Institute New York}}
\vspace{0.2cm}

\begin{center}
\version
\end{center}

\begin{quote}{\small {\bf Abstract: }
The path measure corresponding to the {\it Fr\"ohlich Polaron} appearing in quantum statistical mechanics is defined as the tilted measure $$ \widehat{\mathbb P}_{\varepsilon,T}= \frac{1}{Z(\varepsilon,T)}\exp\bigg(\frac{1}{2}\int_{-T}^T\int_{-T}^T \frac{\varepsilon\mathrm e^{-\varepsilon |t-s|}}{|\omega(t)-\omega(s)|} \mathrm d s \,\mathrm d t\bigg)\mathrm d\mathbb P. $$ Here $\varepsilon>0$ is the Kac parameter (or the inverse-coupling), and $\mathbb P$ is the law of $3d$ Brownian increments. In \cite{MV18} it was shown that the (thermodynamic) limit $\lim_{T\to\infty}\widehat{\mathbb P}_{\varepsilon,T}=\widehat{\mathbb P}_\varepsilon$ exists as a process with stationary increments and this limit was identified explicitly as a mixture of Gaussian processes. In the present article, the strong coupling limit or the vanishing Kac parameter limit $\lim_{\varepsilon\to 0} \widehat{\mathbb P}_\varepsilon$ is investigated. It is shown that this limit exists and coincides with the increments of the Pekar process, which is a stationary diffusion process with generator $\frac 12 \Delta+ (\nabla\psi/\psi)\cdot \nabla$, where $\psi$ is the unique (modulo shifts) maximizer of the Pekar variational problem $$ g_0=\sup_{\|\psi\|_2=1} \Big\{\int_{\mathbb R^3}\int_{\mathbb R^3}\,\psi^2(x) \psi^2(y)|x-y|^{-1}\mathrm d x\mathrm d y -\frac 12\|\nabla \psi\|_2^2\Big\}. $$ As shown in \cite{MV14,KM15,BKM15}, the Pekar process is itself approximated by the limiting ``mean-field Polaron measures", and thus, the present identification of the strong coupling Polaron is a rigorous justification of the '``mean-field approximation" (on the level of path measures) conjectured by Spohn in \cite{S87}. This approximation in the vanishing Kac limit ($\varepsilon\to 0$) is also shown to hold for a general class of Kac-Interaction of the form $H(t,x)=\varepsilon \mathrm e^{-\varepsilon|t|} V(x)$ where $V$ is any continuous function vanishing at infinity.}  %The Pekar process was also earlier identified in 
%\cite{MV14}, \cite{KM15} and \cite{BKM15} as the limiting object of the {\it{mean-field Polaron}} measures.}
\end{quote}

\section{Introduction  and summary}\label{sec-intro}

Consider any function $V:\R^d\to \R$ which is rotationally symmetric, continuous and vanishes at infinity.  Alternatively, choose the Coulomb potential $V(x)=\frac 1 {|x|}$ in $\R^3$ which explodes at the origin. For any $T>0$, let $\P=\P_T$ be the law of Brownian increments in the time interval $[-T,T]$ (i.e., $\P$ is defined only on the $\sigma$-algebra generated by $\{\omega(t)-\omega(s)\colon -T\leq s < t \leq T\}$) and consider a transformed path measure which descends from a {\it Kac-interaction} of the form 
\begin{equation}\label{PV}
\d \widehat\P_{\eps,T}= \frac{1}{Z(\eps,T)}\,\, \exp\bigg\{\frac{1}{2}\int_{-T}^T\int_{-T}^T
{\eps\e^{-\eps |t-s|}}{V(\omega(t)-\omega(s))} \,\d s \,\d t\bigg\}\d\P
\end{equation}
where $\eps>0$ is a constant, or the {\it Kac-parameter}. Here $Z_{\eps,T}$ is the total mass of the exponential weight above, or the {\it partition function}, whose asymptotic behavior in the thermodynamic limit $T\to\infty$, followed by the vanishing 
Kac-parameter limit $\eps\to 0$, has been investigated in \cite{DV83}, where it is shown that 
\begin{equation}\label{V}
\begin{aligned}
&\lim_{\eps\to 0}\lim_{T\to\infty} \frac 1 T\log Z_{\eps,T} \\
&= \sup_{\heap{\psi\in H^1(\R^d)}{\|\psi\|_2=1} }
\bigg\{\int_{\R^d}\int_{\R^d}\,\psi^2(x) \psi^2(y)V(x-y) \, \d x\d y -\frac 12\|\nabla \psi\|_2^2\bigg\}.
\end{aligned}
\end{equation}
Assuming that the above variational problem admits a maximizer $\psi$ which is unique modulo spatial translations, the main result of the article shows that, the actual path measures $\lim_{\eps\to 0}\lim_{T\to\infty} \widehat\P_{\eps,T}$ 
converge in the vanishing Kac interaction to the increments of a stationary diffusion process with generator $\frac 12 \Delta+ (\nabla\psi/\psi)\cdot \nabla$. 
The incentive of our present work came from proving the above convergence of the path measures for 
 the particular choice $V(x)=\frac 1 {|x|}$ in $d=3$, for which the above variational problem is known to admit a smooth maximizer which is unique, up to translations \cite{L76}.
This choice corresponds to the {\it Fr\"ohlich Polaron} -- a model which enjoys quite some prominence in quantum statistical mechanics.

\subsection{Fr\"ohlich polaron and its path measures.} 
Physical motivation of the Fr\"ohlich polaron originates from studying the effective 
behavior of a slow moving electron coupled to a crystal. For the physical relevance of this model, we refer to
\cite{L33,P49,F72,S87}.  The probabilistic layout of this problem was also founded by Feynman via the {\it{path integral}}
 formulation  which is captured by studying the behavior of a Gibbs measure supported 
on a three dimensional Brownian motion acting under a self-attractive Coulomb interaction -- see also 
\cite[Sect. 1]{MV18} for a discussion which relates the quantum mechanical background of the polaron to the path integral approach 
which we pursued in \cite{MV18} and continue to do so in the present context. 
This self-attractive interaction then defines the tilted measure of the form

\begin{equation}\label{eq-Polaron-eps}
{\widehat\P}_{\eps,T}(\d \omega)= \frac 1 {Z(\eps,T)} \exp\bigg\{ \frac {\eps} {2}\int_{-T}^T\int_{-T}^T \frac{\e^{-\eps\, |t-s|}}{|\omega(t)- \omega(s)|} \d\sigma \d s\bigg\} \,\, \P(\d \omega).
\end{equation}
In the above display, $\eps>0$  is  a constant and if we set 
\begin{equation}\label{eps-alpha}
\eps= \alpha^{-2},
\end{equation}
$\alpha$ is called the {\it  coupling parameter}. As before and also in \eqref{eq-Polaron-eps}, $\P$ refers to the law of three dimensional white noise
which is defined only on the $\sigma$-field generated by three dimensional Brownian increments $\{\omega(t)-\omega(s)\colon -T\leq s < t \leq T\}$, while
$Z(\eps,T)$ is the normalization constant or the {\it{partition function}}. 

Here we are concerned with the physically relevant regime of the {\it{strong coupling limit}} initiated already by Pekar (\cite{P49}).
This regime corresponds to studying the asymptotic behavior of the interaction \eqref{eq-Polaron-eps} as $T\to\infty$, followed by  $\eps\to 0$ (or $\alpha\to\infty$). 
Note that for any $\eps>0$, replacing $\omega(s)$  by
$\sqrt{\eps}\omega(\frac{s}{\eps})$  and invoking the scaling property of Brownian motion, we get
\begin{align*}
Z(\eps, T)&=\E^{\P}\bigg[\exp\bigg\{\frac{\eps}{2} \int_{-T}^T\int_{-T}^T \frac{ \e^{-\eps |t-s|}\,\d s \,\d t}{|\omega(t)-\omega(s)|}\bigg\}\bigg]\\
&=\E^{\P}\bigg[\exp\bigg\{\frac{1}{2\sqrt{\eps}} \int_{-\eps T}^{\eps T}\int_{-\eps T}^{\eps T} \frac{ \e^{-|t-s|}\,\d s \,\d t}{|\omega(t)-\omega(s)|}\bigg\}\bigg] \notag\\
&=Z\big(\frac{1}{\sqrt \eps}, \eps T\big)\\
&{=}Z\bigg(\alpha, \frac {T}{\alpha^2} \bigg)
\end{align*}
where the last identity follows from \eqref{eps-alpha}. It was conjectured in \cite{P49} that the {\it{ground-state energy}} of the strong coupling Polaron
\begin{equation}\label{DV}
\begin{aligned}
g_0\stackrel{\mathrm{(def)}}{=}\lim_{\eps\to 0}\lim_{T\to\infty}\frac{1}{2T}\log Z(\eps,T)&=\lim_{\alpha\to\infty}\lim_{T\to\infty}\frac{1}{2T}\log Z(\alpha, \alpha^{-2}T)\\
&=\lim_{\alpha\to\infty}\frac{1}{\alpha^2}\lim_{T\to\infty}\frac{1}{2T}\log Z(\alpha, T)
\end{aligned}
\end{equation}
exists and is given by the {\it{Pekar variational formula}} 
\begin{equation}\label{Pekarfor}
g_0=\sup_{\heap{\psi\in H^1(\R^3)}{\|\psi\|_2=1}} 
\Bigg\{\int_{\R^3}\int_{\R^3}\d x\d y\,\frac {\psi^2(x) \psi^2(y)}{|x-y|} -\frac 12\big\|\nabla \psi\big\|_2^2\Bigg\}.
\end{equation}
Here $H^1(\R^3)$ denotes the usual Sobolev space of square integrable
functions with square integrable gradient. Pekar's conjecture  was proved in \cite{DV83} using large deviation theory (see also \cite{LT97} for a different approach) and the Pekar variational formula was analyzed by Lieb (\cite{L76}) who showed  that   the supremum  in \eqref{Pekarfor} is attained  
and the maximizing set consists of only translates 
$$
\mathfrak m= \{\psi_y(\cdot)=\psi_0(\cdot-y): y\in R^3\}
$$  of  a single maximizer $\psi_0$, 
which is rotationally symmetric  around $0$.

Apart from the ground state energy, another relevant physical quantity for the Polaron is its so-called {\it{effective mass}}, whose rigorous definition 
requires investigating the (asymptotic) behavior of the Polaron path measures $\widehat\P_{\eps,T}$. Unlike the partition function, a rigorous analysis of the actual path measures 
$\widehat\P_{\eps,T}$ turned out to be much more subtle and had remained unanswered on a rigorous level. In a recent article \cite{MV18} we have shown that %there exists $\eps_0,\eps_1\in (0,\infty)$ such that for any fixed $\eps\in (0,\eps_0)\cup (\eps_1,\infty)$, 
for $\eps>0$, the limit 
$$
{\widehat \P}_{\eps}=\lim_{T\to\infty}\widehat\P_{\eps,T}
$$
exists and identified the limit $\widehat\P_\eps$ {\it{explicitly}}. 
As a corollary, we have also deduced the central limit theorem for the distributions 
\begin{equation}\label{CLT}
\lim_{T\to\infty}\widehat\P_{\eps,T}\bigg[\frac{\omega(T)-\omega(-T)}{\sqrt{2T}}\in \cdot\bigg]= \lim_{T\to\infty}\widehat\P_{\eps}\bigg[\frac{\omega(T)-\omega(-T)}{\sqrt{2T}}\in \cdot\bigg]
= \mathbf N\big(0,\sigma^2(\eps)\mathbf{Id}\big)
 \end{equation}
of the increment of the process under $\widehat\P_{\eps,T}$ and $\widehat\P_\eps$ as $T\to\infty$ and obtained an explicit formula for the limiting variance $\sigma^2(\eps)\in (0,1)$  which is directly 
related to the aforementioned effective mass $m_{\mathrm{eff}}(\eps)$ of the Polaron.\footnote{The relation $m_{\mathrm{eff}}(\eps)^{-1}=\sigma^2(\eps)$ follows as a direct consequence of our CLT result \eqref{CLT}, see \cite{DS19}. 
It has also been shown recently in \cite{LS19} that $\lim_{\eps\to 0} m_{\mathrm{eff}}(\eps)=\infty$. However, the rate of divergence of the latter quantity is not known.}%{\textcolor{red}{We refer to Section \ref{sec-review} for a quick review of \cite{MV18}}}. 
It is the goal of the present article to investigate and characterize the strong coupling limit 
$$
{\widehat\Q}=\lim_{\eps\to 0} \widehat \P_\eps =\lim_{\eps\to 0}\,\lim_{T\to\infty} \widehat\P_{\eps,T}
$$
of the Polaron measures. As we will see, this limit $\widehat\Q$ will be determined {\it{uniquely}} by {\it{any}} maximizer $\psi$ of the Pekar variational problem \eqref{Pekarfor}.

\subsection{The mean-field Fr\"ohlich Polaron and the Pekar process.}
Before turning to a more formal description of our main results, it is useful to provide an intuitive interpretation of \eqref{DV} and \eqref{Pekarfor}.
We remark that the interaction appearing in the Polaron problem \eqref{eq-Polaron-eps} is {\it{self-attractive}}.
For fixed $\eps>0$, the measure $\widehat\P_{\eps,T}$ favors 
paths which make $|\omega(t)- \omega(s)|$ small, when $|t- s|$ is not large. In other words, these paths tend to clump together on short time scales. 
However, for strong coupling, this interaction becomes more and more smeared out, and on an intuitive level, in this regime (i.e., $\eps\downarrow 0$) one expects the Polaron interaction 
% so that its effective behavior should 
to resemble the {\it mean-field interaction} given by 
\begin{equation}\label{Phat}
\widehat{\P}_T^{\mathrm{(mf)}}(\d \omega)=\frac 1 { Z_T^{\mathrm{(mf)}}}\, \exp\bigg\{\frac 1{T}\int_0^T\int_0^T \d t \d s \,\frac 1{\big|\omega(t)-\omega(s)\big|}\bigg\} \,\P(\omega).
\end{equation}
The earlier result \eqref{DV} indeed justified
this intuition and underlined the parallel behavior  for the partition functions (on a logarithmic scale) of these two models:
$$
g_0= \lim_{\eps\to 0}\lim_{T\to\infty}\frac 1T\log Z(\eps,T)= \lim_{T\to\infty}\frac 1T\log Z_{T}^{\mathrm{(mf)}}.
$$
Based on the above intuition, Spohn (\cite{S87}) conjectured that the
the strong coupling behavior of the actual Polaron measures $\lim_{\eps\to 0}\lim_{T\to\infty} \, \widehat\Q_{\eps,T}$ should be closely related to the behavior of 
its mean-field counterpart $\lim_{T\to\infty}\widehat{\P}_T^{\mathrm{(mf)}}$. Assuming this conjecture to be true, he also heuristically derived  
(\cite{S87}) the actual decay rate (in leading order) of the diffusion constant 
$\sigma^2(\eps)$ of the central limit theorem appearing in \eqref{CLT} as $\eps\to 0$.

A rigorous analysis of the mean-field model \eqref{Phat} was determined in \cite{BKM15} based on the theory developed in \cite{MV14} and its extension \cite{KM15}.
It was shown in (\cite{BKM15}) that the distribution 
$\widehat\P_T^{\mathrm{(mf)}}\, L_T^{-1}$ of the Brownian occupation measures $L_T=\frac 1 T\int_0^T\delta_{\omega_s} \,\d s $ 
under the mean-field model converges to the distribution 
of a random translation $[\psi_0^2\star \delta_X]\,\d z$ of $\psi_0^2 \, \d z$, with the random shift $X$ having a density $c_0\psi_0 $ 
where $\psi_0$ is the maximizer in \eqref{Pekarfor} centered at $0$ and $c_0$ is the normalizing constant. 
Furthermore, it was also shown (\cite{BKM15}) that the mean-field measures themselves converge 
\begin{equation}\label{mf-convergence}
\widehat\P_T^{\mathrm{(mf)}} \Rightarrow c_0 \, \int_{\R^3} \Q_{\psi_y} \, \psi_0(y) \,\,\d y
\end{equation}
to a mixture with weight $c_0\psi_0(y)\d y$ of the  diffusion  processes  $\Q_{\psi_y}$ with
 generator
\begin{equation}\label{OU}
\frac{1}{2}\Delta+\frac{\nabla\psi_y}{\psi_y}\cdot\nabla
\end{equation}
initialized to start from $0$. The heuristic definition of this diffusion process was set forth in \cite{S87} and was called the {\it{Pekar process}}.

\subsection{Main results: Strong coupling / vanishing Kac limit towards increments of Pekar process.}

Note that the distribution of the {\it{increments}} of the stationary versions of the Pekar process 
$\Q_{\psi_y}$ with generator (\ref{OU}) does not depend on $y$ and defines a unique process $\widehat\Q=\widehat\Q_\psi$ on the space of increments. 
In the  present context, our main result is stated as follows:

\begin{theorem}[Convergence of the Fr\"ohlich Polaron measure in strong coupling]\label{thm1} 
Let $\widehat{\P}_{\eps,T}$ be the path measures for the Fr\"ohlich polaron defined in \eqref{eq-Polaron-eps}, and let $\widehat\Q_\psi$ be the common distribution of the  increments of the stationary Pekar process 
$\Q_{\psi_y}$ with generator \eqref{OU}. Then 
$$
\lim_{\eps\to 0}\, \lim_{T\to\infty} \, \widehat\P_{\eps,T}(\cdot)=\lim_{\eps\to 0}\widehat\P_\eps(\cdot)= \widehat\Q_\psi(\cdot)
$$
\end{theorem} 

Thus, the above result justifies the conjecture posed in \cite{S87} regarding the parallel behavior of 
the measures $\lim_{\eps\to 0}\lim_{T\to\infty}\widehat\P_{\eps,T}$ and $\lim_{T\to\infty}\widehat\P_T^{\mathrm{(mf)}}$.

While we are intrinsically interested in analyzing the case of Fr\"ohlich Polaron, the method of our proof is robust enough to show quite generally that path measures coming from any  Kac-interaction of the  form  \eqref{PV} 
with translation invariance in space converge to the increments of the corresponding mean-field model. More precisely, we have as our second main result:

\begin{theorem}[Convergence of general Kac-interactions towards increments of Pekar-type process]\label{thm2}
Let $V:\R^d \to \R$ be any rotationally symmetric and continuous function vanisihing at infinity and assume that the corresponding variational problem \eqref{V} admits a smooth, strictly positive maximizer $\psi^{\ssup V}\in H^1(\R^d)$ which is unique modulo spatial translations. 
Then we have the following identification of the path measures defined in \eqref{PV} in the vanishing Kac limit: 
$$
\lim_{\eps\to 0}\, \lim_{T\to\infty}\,  \widehat\P_{\eps,T}^{\ssup V}(\cdot)= \widehat\Q_{\psi^{\ssup V}}(\cdot)
$$
where $\widehat\Q_{\psi^{\ssup V}}$ is the common distribution of the  increments of the stationary process 
with generator $\frac 12 \Delta+ \frac{\nabla\psi^{\ssup V}}{\psi^{\ssup V}}\cdot\nabla$.
\end{theorem} 

Let us first briefly outline the method developed here for proving the above results. 
% justifying 
%the ``mean-field approximation" of the Polaron problem on the level of path measures, which is perhaps of significance for other statistical mechanical models. 

\subsection{Outline of the proof:} In order to provide some guidelines for the reader, we will conclude the introduction with an outline of the proof of our main result. %convergence of $\widehat\P_\eps$ to the increment process $\Q_\psi$. 

Let $\Omega_0$ denote the space of continuous functions on $\omega:\R\to\R^d$ vanishing at the origin.  Then for any $t\in \R$, we have a shift $\theta_t:\Omega_0\to \Omega_0$ defined via $(\theta_t\omega)(\cdot)=\omega(t+\cdot)-\omega(\cdot)$, and 
we can denote by $\Mcal_{\mathrm{si}}(\Omega_0)$  the space of $\theta_t$-invariant probability measures 
on $\Omega_0$, or the space of {\it processes with stationary increments}. Note that we also have an action 
$\theta_t:\Omega_0\otimes \R^d\to \Omega_0\otimes \R^d$ by $\theta_t(\omega,x)=(\omega(t+\cdot)-\omega(t), x+ \omega(t))$. Then we can denote by $\Mcal_{\mathrm{s}}(\Omega_0\otimes \R^d)$ to be the space of $\theta_t$-invariant probability measures on $\Omega_0\otimes\R^d$, or the space of {\it stationary processes}. 

The first main step for our proof is to show that $\widehat\P_\eps\in \mathfrak m_\eps$, where $\mathfrak m_\eps$ is the set of maximizers of the variational problem (for the particular case $d=3$)
\begin{equation}\label{geps}
g(\eps)= \sup_{\Q\in \Mcal_{\mathrm{si}}(\Omega_0)}\bigg[ \E^\Q\bigg(\int_0^\infty \frac{\eps \e^{-\eps t} \, \d t}{|\omega(t)-\omega(0)|}\bigg)- H(\Q|\P)\bigg]
\end{equation}
As mentioned earlier, a  variational formula of the above form for $\lim_{T\to\infty} \frac 1 T \log Z_{\eps,T}$  was first obtained in
\cite{DV83} where the supremum above was taken over all stationary processes in $\Mcal_{\mathrm{s}}(\Omega_0\otimes\R^3)$.
This result
was a consequence of a weak large deviation principle (LDP) for the
empirical process of Brownian motion.
However, in this case, the supremum 
may not be attained. This issue is 
resolved if we exploit the underlying i.i.d. structure of the noise which provides exponential tightness and  a full LDP for the emprical process of Brownian increments. 
In this set up, uniform relative entropy estimates then show that
the variational formula \eqref{geps} is coercive
which gurantees
existence of (at least one) maximizer in $\mathfrak m_\eps$ and moreover $\cup_{\eps<\eps_0}\mathfrak m_\eps$ is also tight. The above strong LDP, combined with the existence of the actual limit $\lim_{T\to\infty}\widehat\P_{\eps,T}=\widehat\P_\eps$ then also shows that $\widehat\P_\eps\in \mathfrak m_\eps$. 

It remains to show that if $\eps_n\to 0$ and $(\Q_n)\subset \mathfrak m_{\eps_n}$ is any sequence of maximimizers such that $\Q_n\weak \Q$ weakly, then $\Q\in \Mcal_{\mathrm{si}}(\Omega_0)$ must be  the distribution of the increments of the stationary Pekar process. The task then splits into two further steps. 
First note that not every process with stationary increments appear as the {\it increments} of another stationary process. For our purposes, we first provide a general criterion that determines when any $\Q\in \Mcal_{\mathrm{si}}(\Omega_0)$ admits this cocycle representation (i.e. any $\Q\in \Mcal_{\mathrm{si}}(\Omega_0)$ appears as the increments of some
$\Q^\prime\in \Mcal_{\mathrm{s}}(\Omega_0\otimes\R^d)$),  see Theorem \ref{lemma:cocycle}. This criterion is formulated in terms of convergence of integrals of continuous  functions vanishing at infinity w.r.t. measures on the function space $\Omega_0$. However, since $\Omega_0$ is not even locally compact, there is no notion of usual vague convergence of measures on this space (determined by convergence of integrals w.r.t. continuous functions vanishing at infinity). We therefore formulate a notion of {\it wea-gue} convergence 
on measures on $\Omega_0\otimes \R^d$, see section \ref{sec:weague} which is conceptually important for the proof of Theorem \ref{lemma:cocycle}.  

To this end, however  we encounter another fundamental problem. The cocycle representation obtained from Theorem \ref{lemma:cocycle} is not unique-- any $\Q\in \Mcal_{\mathrm{si}}(\Omega_0)$ can be written
 as the increments of  an {\it entire orbit} $\widetilde\Q^\prime=\{\Q\star \delta_a\colon a\in \R^d\}$  
for some $\Q^\prime\in \Mcal_{\mathrm{s}}(\Omega_0\otimes\R^d)$, where for any $A\subset \Omega_0$ and $B\subset \R^d$, we define $(\Q^\prime\star\delta_a)[A\otimes B]=\Q^\prime[A\otimes (B-a)]$. 
While identifying any limiting maximizer as the increments of the Pekar process, this non-uniqueness leads to the following 
obstacle. For any sequence $(\Q_n)_n\subset \Mcal_{\mathrm s}(\Omega_0\otimes \R^d)$ even if we assume that 
its marginals $\Q_n^{\ssup 1}$ on $\Omega_0$ form a uniformly tight family, its marginals $\Q_n^{\ssup 2}$ on $\R^d$ might still fail to have a convergent subsequence. Its mass may split and escape into two or more different directions (for instance, $\sum_i p_i\delta_{a_n^{\ssup j}}$ such that $|a_n^{\ssup i}-a_n^{\ssup j}|\to \infty$ for $i\ne j$) or it could totally disintegrate into dust like a Gaussian with large variance, or it could form a mixture of all these {\it widely separated} components. By taking spatial shifts and recovering one such component at a time, in the limit we only imagine an empty, finite or countable collection of orbits of sub-probability meaures, while possibly allowing some mass to totally disintegrate into dust. This intuition leads to a refinement of the method developed in \cite{MV14}. We define the space 
$\mathscr X=\{\Theta=[\xi,\beta]\}$ of all collections $[\xi,\beta]$ where $\xi=\{\widetilde\lambda_j\}$ is am empty, finite or countable collection of orbits 
of sub-probability measures on $\Omega_0\otimes\R^d$ and $\beta$ is a probability measure on $\R^d$ such that $\sum_j \widetilde\lambda^{\ssup 1}(\cdot)\leq \beta(\cdot)$, where $\widetilde\lambda^{\ssup 1}$ is the common marginal of $\lambda$ on $\Omega_0$, see Section \ref{sec:compactify}. We metrize the space $\mathscr X$ which provides a topology that ensures the following crucial property: Let $K\subset\mathscr X$ be such that as $\Theta=[\xi,\beta]$ varies over $K$, $\beta$ varies over a uniformly tight family of probability measures on $\Omega_0$. Then every sequence in $K$ finds a convergent subsequence in that metric in $\mathscr X$. On a technical level, the above recipe then leads to a generalization of the theory developed in \cite{MV14} where the compactification was earlier carried out for measures only in $\R^d$.
In the present context,  this refined compactness result 
for the space $\mathscr X$ holds the key for the identification of any limiting maximizer $\lim_{\eps\to 0}\widehat\P_\eps$ as the distribution of increments of the stationary Pekar process.

%By construction, this metric also captures continuity properties of the maps $[\xi,\beta]\mapsto \sum_{\widetilde\lambda\in\xi} \int_{\R^{2d}}V(x-y)\widetilde\lambda^{\ssup 2}(\d x)\widetilde\lambda^{\ssup 2}(\d y)$ for any continuous function $V:\R^{2d}\to\R$ vanishing at infinity. On a technical level, the above recipe then leads to a generalization of the theory developed in \cite{MV14} where the compactification was carried out for measures only in $\R^d$.  In the present context, this refinement is essential for the identification of any limiting maximizer $\lim_{\eps\to 0}\widehat\P_\eps$ as the distribution of increments of the stationary Pekar process.

\noindent{\it Organization of the  article:} The rest of the article is organized as follows. Section \ref{notation} is entirely devoted to defining topologies on measures 
on suitable spaces, their quotient spaces under group actions and deriving properties of the aforementioned space $\mathscr X$. We will derive several useful properties regarding processes with stationary increments and their cocycle repesentations in Section \ref{sec-stationarity}, while Section \ref{sec-entropy} is devoted to proving relative entropy estimates that provide coercivity properties of the variational formula $g(\eps)$, while
 enabling us to circumvent singularity of the Coulomb potential. Combining all previous arguments, in Section \ref{sec-mainresult} and Section \ref{sec-result} 
we prove our key results showing that $\widehat\P_\eps\in \mathfrak m_\eps$, while as $\eps\to 0$,  any limiting maximizer in $\mathfrak m_\eps$ appears as the distribution of increments of the Pekar process.

\section{Compactification of quotient spaces under group actions}\label{notation}

\subsection{The wea-gue topology.}\label{sec:weague}

In the sequel, for any topological space $Y$, we will write for $\Mcal_{1}(Y)$ and $\Mcal_{\leq 1}(Y)$ to be the spaces of all probability and sub-probability measures on $Y$ respectively. 
We now fix a complete separable metric space $X$. Then both $\Mcal_1(X\otimes \R^d)$ and $\Mcal_{\leq 1}(X\otimes \R^d)$ are equipped with the 
weak topology for which a sequence $\lambda_n$ of (sub)-probability measures converges to $\lambda$, written $\lambda_n\weak\lambda$, 
if and only if 
\begin{equation}\label{convergence}
\int F(x,y) \lambda_n(\d x \, \d y)\to \int F(x,y) \lambda(\d x\d y)
\end{equation}
for any continuous and bounded function $F: X\otimes \R^d \to \R$. The same notion of weak convergence holds also for the space $\Mcal_{\leq 1}(\R^d)$.
However, for the latter case we also have the notion of {\it{vague convergence}} (written $\lambda_n\vague \lambda$) which demands \eqref{convergence} to hold 
for continuous functions $f:\R^d\to \R$ vanishing at infinity or for continuous functions with compact support. Equivalently, the vague convergence can be also obtained by considering the weak topology on (sub-)probability 
measures on the one-point compactification $\overline\R^d=\R^d\cup \{\infty\}$ and removing the mass at $\infty$.  Note that any sequence of sub-probability measures 
on $\R^d$ has a vaguely convergent subsequence, while the weak convergence fails to possess this property.

Since the space $X$ need not be locally compact, there is no notion of vague convergence for measures on
$X\otimes\R^d$. However,  we can again consider the weak topology on $\Mcal_{\leq 1}(X\otimes\overline\R^d)$ and remove the mass at $\infty$, 
which 
leads us to the following notion of {\it{wea-gue}} convergence in this set up: We say that a sequence $\lambda_n$ 
converges to $\lambda$ {\it{weaguely}} in the space $\Mcal_{\leq 1}(X\otimes \R^d)$ if and only if 
$\int_{X\otimes\R^d} F(x,y) \lambda_n(\d x \d y)\to \int_{X\otimes\R^d} F(x,y) \lambda(\d x \d y)$ for all continuous functions $F:X\otimes \R^d\to \R$ such that 
$$
\lim_{|y|\to\infty}\,\sup_{x\in X}\, F(x,y)=0.
$$
We will have several occasions to use the following elementary result, which is an immediate consequence of the aforementioned compactness of vague topology and Prohorov's theorem.
\begin{lemma}\label{lemma-weague}
Any sequence of probability measures on $X\otimes R^d$ with uniformly tight marginals on $X$ will have a subsequence that converges weaguely.\qed
\end{lemma}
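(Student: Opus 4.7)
My plan is to reduce the statement to an application of Prohorov's theorem on the larger Polish space $X\otimes \overline{\R^d}$, where $\overline{\R^d}=\R^d\cup\{\infty\}$ denotes the one-point compactification, and then to verify that weak convergence on this compactification descends to wea-gue convergence on $X\otimes\R^d$ after stripping the mass at $\infty$.

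First I would view each $\lambda_n$ as a probability measure on the Polish space $X\otimes\overline{\R^d}$ (supported on $X\otimes\R^d$). Using uniform tightness of the marginals on $X$, for any $\eps>0$ I can pick a compact $K_\eps\subset X$ with $\lambda_n^{\ssup 1}(K_\eps)\geq 1-\eps$ for every $n$; then $K_\eps\times\overline{\R^d}$ is compact in $X\otimes\overline{\R^d}$ and carries mass at least $1-\eps$ under each $\lambda_n$, so the sequence is tight on $X\otimes\overline{\R^d}$. Prohorov's theorem then yields a subsequence $\lambda_{n_k}\weak\lambda$ on $X\otimes\overline{\R^d}$ for some $\lambda\in\Mcal_1(X\otimes\overline{\R^d})$, and I would set $\widetilde\lambda:=\lambda|_{X\otimes\R^d}\in\Mcal_{\leq 1}(X\otimes\R^d)$ as the candidate wea-gue limit.

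To verify wea-gue convergence, given a bounded continuous $F:X\otimes\R^d\to\R$ with $\lim_{|y|\to\infty}\sup_{x\in X}F(x,y)=0$, I would extend $F$ to $\widetilde F$ on $X\otimes\overline{\R^d}$ by $\widetilde F(x,\infty):=0$. The key observation is that the hypothesis $\lim_{|y|\to\infty}\sup_{x\in X}F(x,y)=0$ is precisely what makes $\widetilde F$ continuous at every point of the form $(x,\infty)$: for any sequence $(x_k,y_k)\to(x,\infty)$ in $X\otimes\overline{\R^d}$ one has $|y_k|\to\infty$, so $|\widetilde F(x_k,y_k)|\leq\sup_{x\in X}|F(x,y_k)|\to 0=\widetilde F(x,\infty)$. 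Weak convergence on the compactification then yields
\[
\int F\,\d\lambda_{n_k}=\int\widetilde F\,\d\lambda_{n_k}\longrightarrow\int\widetilde F\,\d\lambda=\int F\,\d\widetilde\lambda,
\]
which is the definition of wea-gue convergence $\lambda_{n_k}\to\widetilde\lambda$.

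There is no substantive obstacle here beyond matching the hypothesis $\lim_{|y|\to\infty}\sup_{x\in X}F(x,y)=0$ to continuity of the extension $\widetilde F$ at the fiber $\{(x,\infty):x\in X\}$; without the uniform supremum in $x$ one could only define $\widetilde F$ as a Borel (not continuous) extension, and the standard weak-convergence machinery would not apply. The tightness of the $X$-marginals combined with compactness of $\overline{\R^d}$ feeds directly into Prohorov's theorem in the usual way, so the argument is essentially a bookkeeping exercise built on this one observation.
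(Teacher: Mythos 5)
Your proof is correct and follows exactly the route the paper intends: the paper states the lemma as an immediate consequence of passing to the one-point compactification $\overline{\R^d}$ and applying Prohorov's theorem, and your argument fills in precisely those details (tightness on $X\otimes\overline{\R^d}$ from the tight $X$-marginals and compactness of $\overline{\R^d}$, followed by the observation that the uniform-in-$x$ decay of $F$ is exactly continuity of its zero-extension at the fiber over $\infty$). No gaps.
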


\subsection{Quotient space of $\Mcal_{\leq 1}(X\otimes \R^d)$ and the metric space $(\mathscr X, \mathscr D)$.}\label{sec:compactify}

Note that the translation group $\{T_a: a\in \R^d\} $ acts on $X\otimes \R^d$ by mapping 
\begin{equation}\label{eq:translation}
(x,y)\mapsto (x,y+a),
\end{equation}
This action then also induces a map on the space  of measures $\Mcal_{\leq 1}(X\otimes \R^d)$ which we denote by 
$$
\lambda(\d x\, \d y)\mapsto(\lambda\star\delta_a)(\d x \, \d y)\stackrel{\mathrm{(def)}}{=}\lambda\big(\d x \,\, \d(y+a)\big) \qquad x \in X, \,\, a,y\in \R^d.
$$
We can then define an equivalence relation on $\Mcal_{\leq 1}(X\otimes \R^d)$ by setting 
$$
\lambda\sim\lambda^\prime\qquad\mbox{if}\quad \lambda^\prime=\lambda\star\delta_a\qquad\mathrm{for \,\,\, some}\quad a\in \R^d,
$$
which leads to the notion of equivalence classes or {\it{orbits}} which we denote by 
$\widetilde\lambda=\{\lambda\star\delta_a\colon a\in \R^d\}$ and the corresponding quotient space by 
$$
\widetilde\Mcal_{\leq 1}(X\otimes \R^d)= \Mcal_{\leq 1}(X\otimes \R^d)\big/ \sim.
$$
For any $\lambda\in \Mcal_{\leq 1}(X\otimes \R^d)$, $\lambda^{\ssup 1}$
and $\lambda^{\ssup 2}$ are its marginals on $X$ and $\R^d$ respectively. If ${\widetilde\lambda}\in \widetilde\Mcal_{\leq 1}(X\otimes \R^d)$ is  an orbit, 
${\widetilde\lambda}^{\ssup 1}$ is well defined as the common marginal, whereas ${\widetilde\lambda}^{\ssup 2}$ is an orbit of sub-probability measures on $\R^d$.

We enlarge the space $\widetilde\Mcal_{\leq 1}(X\otimes \R^d)$ to
\begin{equation}\label{mathscr-X}
\begin{aligned}
\mathscr X&= \bigg\{\Theta=[\xi,\beta]\colon \xi=\{\widetilde\lambda_j\}_j,\,\,\lambda_j\in \Mcal_{\leq 1}(X\otimes\R^d),\,\beta \in \Mcal_1(X), \\
&\qquad\qquad \qquad\qquad\sum_{{\widetilde\lambda}\in \xi}{\widetilde\lambda}^{\ssup 1}(\cdot)\le \beta(\cdot)\bigg\}.
\end{aligned}
\end{equation}

In other words, $\mathscr X$ consists of all collections of $\Theta=[\xi, \beta]$, where $\beta$ is a probability measure on $X$ and $\xi$ is an empty, finite or countable collection $\{\widetilde\lambda_j\}$ of 
orbits $\widetilde \lambda_j \in \widetilde\Mcal_{\leq 1}(X\otimes \R^d)$ with the property that 
$$
\sum_{{\widetilde\lambda}\in \xi}{\widetilde\lambda}^{\ssup 1}(\cdot)\leq \beta(\cdot).
$$ 
Clearly, we have an embedding 
$$
\widetilde\Mcal_{1}(X\otimes \R^d) \hookrightarrow \mathscr X,
$$
since for any single orbit ${\widetilde\lambda}$ of a probability measure, we have 
 $$
\big[\{\widetilde\lambda\}, {\widetilde\lambda}^{\ssup 1)}\big] \in \mathscr X.
 $$ %where $S$ is the single orbit $\widetilde\lambda$ and $\beta$ is the marginal ${\widetilde\lambda}^{(1)}$ on $X$.

\medskip
We will now define a metric on $\mathscr X$. For any $\Theta_1=[\xi_1,\beta_1], \theta_2=[\xi_2,\beta_2] \in \mathscr X$, we set % is given as the sum
\begin{equation}\label{metric1}
\mathscr D(\Theta_1,\Theta_2)= \mathbf D^\star(\xi_1,\xi_2)+\d(\beta_1,\beta_2)
\end{equation}
where $\mathbf D^\star$ and $\d$ are defined as follows. The definition of $\d$ as a metric on $\Mcal_1(X)$ is straightforward. 
Indeed, we choose a  countable set of continuous functions $\mathfrak F= \{f_j(x)\}$ with $\sup_j\sup_x|f_j(x)|\le 1$ such that 
the existence of the limit $\lim_{n\to\infty}\big|\int f_j(x)\beta_n(\d x)-\int f_j(x)\beta(\d x)|=0$ for every $j$ is equivalent to the weak convergence of $\beta_n\weak\beta$.  On ${\cM_1}(X)$, we then have the metric 
\begin{equation}\label{metric2}
\d(\beta_1, \beta_2)=\sum_j \frac{1}{2^j}\big|\int_X f_j(x)\beta_1(\d x)-\int_X f_j(x)\beta_2(\d x)\big|
\end{equation}
We define $\mathbf D^\star$ as follows. For each $k\ge 2$ we denote by  ${\cC}_k$ the space of functions $W(y_1,\ldots,y_k)$ that satisfy $W(y_1+a,\ldots, y_k+a)=
W(y_1,\ldots, y_k)$ for all $a\in \R^d$  and $\lim_{\sup_{i,j}|y_i-y_j|\to\infty}|W(y_1,\ldots, y_k)|=0$. Since ${\cC}_k$
is separable in the uniform metric, we can choose a countable collection ${\cW}_k\subset {\cC}_k$  of functions $W$
such that they  are uniformly bounded by $1$ and their linear combinations  are dense in ${\cC}_k$. We denote by
${\cW}$ the countable set $\cup_{k\ge 2} {\cW}_k$ and list it as $\{W_j\}$. Then with $\mathfrak F=\{f_j\}_j$ being the basis 
for the metric $\d$ on $\Mcal_1(X)$ above, we can enumerate all the combinations $\{f_j,W_{j^\prime}\}$
as single sequence $\{f_r, W_r\}$, so that each $W_r$ is a function $W(y_1,\ldots, y_{k(r)})$ of $k(r)$ variables. Then for any $\xi=\{\widetilde \lambda_j\}$, we set 
 \begin{equation}\label{metric4}
%\Lambda_{i,r}
\Lambda_{f,W}(\xi)= \sum_{{\widetilde\lambda}\in \xi_i}\int f(x_1)\cdots f(x_{k(r)}) W(y_1,\ldots, y_{k(r)})\prod_{i=1}^{k_r}\lambda(\d x_i, \d y_i)%\cdots\lambda(dx_{k(r)},dy_{k(r)})
\end{equation} 
and define 
 \begin{equation}\label{metric3}
\mathbf D^\star(\xi_1,\xi_2)=\sum_{r=1}^\infty \frac{1}{2^r}\big|\Lambda_{f_r,W_r}(\xi_1)-\Lambda_{f_r,W_r}(\xi_2)\big|
\end{equation}
%where for $i=1,2$ 
%\begin{equation}\label{metric4}
%\Lambda_{i,r}
%\Lambda_{i,r}(f_r,W_r)= \sum_{{\widetilde\lambda}\in \xi_i}\int_{(X\otimes\R^d)^{k_r}} f_r(x_1)\cdots f_r(x_{k(r)}) W_r(y_1,\ldots, y_{k(r)})\prod_{i=1}^{k_r}\lambda(\d x_i, \d y_i)%\cdots\lambda(dx_{k(r)},dy_{k(r)})
%\end{equation}
Note that the integrals in (\ref{metric4}) depend only on the orbit $\widetilde \lambda$ and therefore $\mathbf D^\star$ is well-defined. 
We now need to justify that $\mathscr D(\theta_1,\theta_2)= \mathbf D^\star(\xi_1,\xi_2) + \d(\beta_1,\beta_2)$ defined in \eqref{metric1} is a metric on $\mathscr X$. 
Since non-negativity and triangle inequality is obvious, we only need to verify that $\mathscr D(\Theta_1,\Theta_2)=0$ implies $\Theta_1=\Theta_2$. 
However, since $\d$ is already a metric in $\Mcal_1(X)$, we only need to verify that $\mathbf D^\star(\xi_1,\xi_2)=0$ forces $\xi_1=\xi_2$. 
The following lemma will guarantee the validity of the last statement. 
\begin{lemma}\label{lemma-metric}
Let $\lambda$, $\gamma$ be  two probability measures on $X\otimes \R^d$ such that for any $k\geq 2$, 
\begin{equation}\label{eq-lemma-metric}
\begin{aligned}
&\int W(y_1,y_2,\ldots,y_k) \prod_{i=1}^k f(x_i) \prod_{i=1}^k \lambda(\d x_i,\d y_i) %\cdots\lambda(\d x_k,\d y_k)\\
\\
&=
\int W(y_1,y_2,\ldots,y_k) \prod_{i=1}^k f(x_i)\prod_{i=1}^k\gamma(\d x_i,\d y_i)
\end{aligned}
\end{equation}
for all  functions $f\in \mathfrak F$  and  $W\in \mathcal W$.
Then there is $a\in \R^d$ such that $\gamma (A\times B)=\lambda (A\times (B+a)\big)$ for all  measurable $A\subset X$ and $B\subset \R^d$.
\end{lemma}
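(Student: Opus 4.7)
My plan combines the $k=2$ and $k=3$ cases of the hypothesis with a short Fourier analysis on $\R^d$, followed by a polarization in $f$.

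First, by density of $\mathfrak F$ in $C_b(X)$ and of $\mathrm{span}(\cW_k)$ in $\cC_k$, the identity extends to every $f\in C_b(X)$ and $W\in \cC_k$. Testing it against a bounded sequence $W_n\in \cC_k$ converging pointwise to $1$ (for example $W_n=\prod_{i<j}\chi(|y_i-y_j|/n)$ with $\chi\in C_c(\R)$, $\chi(0)=1$) and invoking dominated convergence shows that $\lambda$ and $\gamma$ share the same $X$-marginal, which I denote by $\beta$.

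For each nonnegative $f\in C_b(X)$ with $\int f\,d\beta>0$, let $m^f_\lambda$ be the finite positive measure on $\R^d$ defined by $m^f_\lambda(A)=\int f(x)\mathbf{1}_A(y)\,\lambda(dx,dy)$, and analogously for $m^f_\gamma$. Taking $k=2$ with $W(y_1,y_2)=h(y_1-y_2)$, $h\in C_0(\R^d)$, yields $m^f_\lambda\star \check{m^f_\lambda}=m^f_\gamma\star \check{m^f_\gamma}$, hence $|\widehat{m^f_\lambda}|=|\widehat{m^f_\gamma}|$. Taking $k=3$ with $W(y_1,y_2,y_3)=h(y_2-y_1,y_3-y_1)$, $h\in C_0(\R^{2d})$, yields
\[
\widehat{m^f_\lambda}(\xi_1)\widehat{m^f_\lambda}(\xi_2)\widehat{m^f_\lambda}(\xi_3)=\widehat{m^f_\gamma}(\xi_1)\widehat{m^f_\gamma}(\xi_2)\widehat{m^f_\gamma}(\xi_3)\qquad\text{on}\ \xi_1+\xi_2+\xi_3=0.
\]
Writing $\widehat{m^f_\gamma}=e^{i\theta_f(\xi)}\widehat{m^f_\lambda}$ on the neighbourhood of $0$ where $\widehat{m^f_\lambda}\ne 0$, these relations combine to the Cauchy equation $\theta_f(\xi_1)+\theta_f(\xi_2)+\theta_f(-\xi_1-\xi_2)\equiv 0\pmod{2\pi}$ with $\theta_f(0)=0$, so continuity forces $\theta_f(\xi)=a_f\cdot\xi$ for some $a_f\in\R^d$.

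The crucial step is to show that $a_f$ does not depend on $f$. Linearity of $f\mapsto m^f_\cdot$ applied to $g_1$, $g_2$ and $g_1+g_2$ combined with the representation just obtained produces
\[
(e^{ib_1\cdot\xi}-1)\,\widehat{m^{g_1}_\lambda}(\xi)+(e^{ib_2\cdot\xi}-1)\,\widehat{m^{g_2}_\lambda}(\xi)=0\qquad\forall\xi\in\R^d,
\]
with $b_j=a_{g_j}-a_{g_1+g_2}$. Expansion at $\xi=0$ forces $b_1\int g_1\,d\beta+b_2\int g_2\,d\beta=0$. The hard part is to rule out $b_1\ne 0$: on each hyperplane $H_n=\{b_1\cdot\xi=2\pi n\}$ with $n\ne 0$ the first factor vanishes, requiring $\widehat{m^{g_2}_\lambda}$ to vanish there wherever the second factor does not; combined with the continuity of $\widehat{m^{g_2}_\lambda}$ and $\widehat{m^{g_2}_\lambda}(0)=\int g_2\,d\beta>0$, and letting $(g_1,g_2)$ range over a rich family (e.g.\ indicators $\mathbf{1}_{A_1}$, $\mathbf{1}_{A_2}$ of measurable subsets of $X$ chosen so that the resulting conditional marginals on $\R^d$ are sufficiently different), one reaches a contradiction. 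Hence $b_1=b_2=0$, so $a_{g_1}=a_{g_2}=a_{g_1+g_2}$, and there is a single $a\in\R^d$ with $a_f\equiv a$.

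Finally, $\widehat{m^f_\gamma}(\xi)=e^{ia\cdot\xi}\widehat{m^f_\lambda}(\xi)$ is the Fourier statement that $m^f_\gamma(B)=m^f_\lambda(B+a)$ for every Borel $B\subset\R^d$; specialising $f=\mathbf{1}_A$ by monotone approximation and varying $A$ and $B$ yields $\gamma(A\times B)=\lambda(A\times(B+a))$, as required.
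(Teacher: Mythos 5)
Your overall architecture matches the paper's: first let $W\to 1$ to identify the $X$-marginals, then for each fixed $f$ use the $k=2$ and $k=3$ identities to produce a shift $a_f$ with $\widehat{m^f_\gamma}=\e^{\mathbf i\langle \xi,a_f\rangle}\widehat{m^f_\lambda}$ (the paper simply cites the proof of Theorem 3.1 of \cite{MV14} for this step, which you essentially re-derive), and finally polarize in $f$ to show $a_f$ is constant. Your polarization identity $(\e^{\mathbf i b_1\cdot\xi}-1)\widehat{m^{g_1}_\lambda}+(\e^{\mathbf i b_2\cdot\xi}-1)\widehat{m^{g_2}_\lambda}=0$ and the first-order relation $b_1\int g_1\,\d\beta+b_2\int g_2\,\d\beta=0$ are exactly the paper's $2c=a+b$ step.

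However, the decisive step --- ruling out $b_1\neq 0$ --- is a genuine gap, and the route you sketch cannot be completed as stated. For a \emph{single} pair $(g_1,g_2)$ the identity above is perfectly consistent with $b_1\neq 0$: take $m^{g_2}_\lambda=m^{g_1}_\lambda\star\delta_d$ with equal masses and $b_1=d=-b_2$; then the bracket $\e^{\mathbf i d\cdot\xi}-1+(\e^{-\mathbf i d\cdot\xi}-1)\e^{\mathbf i d\cdot\xi}$ vanishes identically, so no contradiction is available from hyperplane zero sets or from continuity near the origin, and ``letting $(g_1,g_2)$ range over a rich family'' is not an argument. What the paper actually does with this identity is the opposite of seeking a contradiction: using $b_1\parallel -b_2$ it \emph{solves} the identity to conclude $\psi_{g_2}(t)=\psi_{g_1}(t)\e^{\mathbf i\langle t,d\rangle}$ for some $d$, i.e.\ all the conditional measures $\lambda_f$ are translates of a single one, say $\lambda_1$, with relative shift $a(f)$. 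It then applies the polarization a second time to the specific convex combination $\frac{1+f}{2}$, obtaining $\e^{\mathbf i\langle t,a(\frac{1+f}{2})\rangle}=\frac12\big[1+\e^{\mathbf i\langle t,a(f)\rangle}\big]$ for $t$ near $0$; taking the modulus forces $\e^{\mathbf i\langle t,a(f)\rangle}=1$ for all small $t$, hence $a(f)=0$ for every $f$, i.e.\ a common shift. You would need to supply this (or an equivalent) argument to close your proof; as written, the key claim is asserted rather than proved.
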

\begin{proof}
Given \eqref{eq-lemma-metric}, if we let $W\to 1$, the bounded convergence theorem implies that the marginals of $\lambda$ and $\gamma$ on $X$ are the same.  
For any $f\ge 0$, let us define two measures $\lambda_f$ and $\gamma_f$ on $\R^d$ by
$$
\lambda_f(B)={1\over c(f)} \int_{X\otimes B} f(x)\lambda (\d x,\d y), 
\qquad
\gamma_f(B)={1\over c(f)} \int_{X\otimes B} f(x)\gamma (\d x,\d y)
$$
where $c(f)$ is the normalizing constant
$$
c(f)=\int_{X\otimes \R^d} f(x) \lambda(\d x,\d y)=\int_{X\otimes \R^d} f(x) \gamma(\d x,\d y).
$$

It follows that for every $f\in \mathfrak F$, $W\in\mathcal W$ and $k \geq 2$, 
$$
\int  W(y_1,y_2,\ldots,y_k) \lambda_f(\d y_1)\cdots\lambda_f(\d y_k)=
\int W(y_1,y_2,\ldots,y_k) \gamma_f(\d y_1)\cdots\gamma_f(\d y_k)
$$
which implies from \cite[Proof of Theorem 3.1]{MV14} that for each $f\in \mathfrak F$, there is an $a=a(f)\in \R^d$ such that 
\begin{equation}\label{eq1-lemma-metric}
\gamma_f(B)=\lambda_f(A+a).
\end{equation}
We need to show that $a(f)$ is independent of $f$. 
We choose another $g\in \mathfrak F$ and let $\phi_f$, $\phi_g$ and $\phi_{(f+g)/ 2}={1\over 2} [\phi_f+\phi_g]$ denote the characteristic functions of $\lambda_f$, $\lambda_g$ and ${1\over 2}[\lambda_f+\lambda_g]$, respectively.
Likewise, $\psi_f$, $\psi_g$ and $\psi_{(f+g)/ 2}={1\over 2} [\psi_f+\psi_g]$ will denote the characteristic functions of $\gamma_f$, $\gamma_g$ and ${1\over 2}[\gamma_f+\gamma_g]$, respectively. 
Then \eqref{eq1-lemma-metric} implies 
$$
\phi_f (t)=\psi_f(t) \e^{\mathbf i\langle t,a\rangle };\quad  \phi_g (t)=\psi_g(t) \e^{\mathbf i\langle t,b\rangle }; \quad \phi_{f+g\over 2} (t)=\psi_{f+g\over 2}(t) \e^{\mathbf i\langle t,c\rangle }
$$
Thus,
$$
[\psi_f(t)+\psi_f(t)]\, \e^{i\langle t,c\rangle }=  \phi_f(t)+\phi_g(t)     =\psi_f(t) \, \e^{\mathbf i\langle t,a\rangle }+\psi_g(t) \e^{\mathbf i\langle t,b\rangle }
$$
or, equivalently, 
$$
[\psi_f(t)+\psi_f(t)][\e^{\mathbf i\langle t,c\rangle }-1]=\psi_f(t) [\e^{\mathbf i\langle t,a\rangle }-1]+\psi_g(t) [\e^{\mathbf i\langle t,b\rangle }-1]
$$
Dividing both sides by $t$ and letting $t\to 0$, we obtain $2c=a+b$, implying 
$$
\psi_f(t)[\e^{\mathbf i\langle t,c\rangle }-\e^{\mathbf i\langle t,a\rangle }]+\psi_g(t)[\e^{\mathbf i\langle t,c\rangle }-\e^{\mathbf i\langle t,b\rangle }]=0,
$$
or, 
$$
\psi_f(t)[\e^{\mathbf i\langle t,a-c\rangle }-1]+\psi_g(t)[\e^{\mathbf i\langle t,b-c\rangle }-1]=0
$$
Since $a-c=c-b$,
$$
\psi_g(t)=\psi_f(t){ \e^{\mathbf i\langle t,b-c\rangle }-1\over 1-\e^{-\mathbf i\langle t,a-c\rangle }} =\psi_f(t)\e^{\mathbf i\langle t,b-c\rangle }
$$
Starting from $f=1$, it follows that for every $f$, there is an $a(f)$ such that 
$$
\psi_f(t)=\psi_1(t)\e^{\mathbf i\langle t,a(f)\rangle }
$$
and
$$
\psi_1(t)\, \e^{i\langle t,a({1+f\over 2})\rangle }=\psi_{1+f\over 2}(t)={1\over 2} [\psi_1(t)+\psi_f(t))]={1\over 2}[1+\e^{\mathbf i\langle t,a(f\rangle }]\psi_1(t)
$$
Since $\psi_1(0)=1$ we have  for $t$ near zero
$$
\e^{\mathbf i\langle t,a({1+f\over 2})\rangle }={1\over 2}[1+\ e^{\mathbf i \langle t,a(f)\rangle }],
$$
which forces $a(f)=0$ or $a=b=c$, proving the requisite claim. 
\end{proof}

\subsection{The compactness result.} 

The metric $\mathscr D$ and the resulting topology on $\mathscr X$ provides the following compactness result. 

\begin{theorem}\label{main2}
Let $K\subset \mathscr X$ be such that as $\Theta=[\xi,\beta]$ varies over $K$, $\beta$ ranges over a uniformly tight family (in the usual weak topology) 
in ${\cM}_1(X)$. Then every sequence from $K$ has a subsequence that converges in the metric space $(\mathscr X,\mathscr D)$.
\end{theorem}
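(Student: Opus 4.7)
The plan is to produce, along a subsequence of $\{\Theta_n=[\xi_n,\beta_n]\}\subset K$, a limit point $\Theta^*=[\xi^*,\beta]\in\mathscr X$ with $\mathscr D(\Theta_n,\Theta^*)\to 0$. Since $\{\beta_n\}$ is uniformly tight in $\Mcal_1(X)$, Prohorov's theorem gives a subsequence (still indexed by $n$) along which $\beta_n\weak\beta$, hence $\d(\beta_n,\beta)\to 0$ by construction of $\d$. The remaining work is to extract a suitable $\xi^*$ and to verify convergence in $\mathbf D^\star$.

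For each $n$, enumerate $\xi_n=\{\widetilde\lambda_{n,j}\}_{j\ge 1}$ in non-increasing order of the masses $m_{n,j}=\widetilde\lambda_{n,j}^{\ssup 1}(X)$ (padding by null orbits if the collection is finite). Because $\sum_j m_{n,j}\le 1$, a diagonal extraction yields a subsequence along which $m_{n,j}\to m_j$ for each $j$. I would then run a concentration-compactness extraction inside each orbit, mirroring the approach of \cite{MV14} but in the enlarged state space $X\otimes\R^d$. For each $j$ with $m_j>0$, choose representatives $\lambda_{n,j}$ of $\widetilde\lambda_{n,j}$ whose $\R^d$-marginal maximizes mass in a fixed ball around the origin. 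Since $\lambda_{n,j}^{\ssup 1}\le\beta_n$ is uniformly tight on $X$, Lemma~\ref{lemma-weague} yields a weaguely convergent subsequence $\lambda_{n,j}\to\nu_{j,1}\in\Mcal_{\le 1}(X\otimes\R^d)$. If $\|\nu_{j,1}\|<m_j$, the missing mass is either recoverable via alternative representatives $\lambda_{n,j}\star\delta_{a_{n,j,k}}$ with $|a_{n,j,k}|\to\infty$ (producing further weague limits $\nu_{j,k}$), or it has dispersed into ``dust,'' meaning its $\R^d$-marginal spreads so that no ball retains positive mass in the limit. Because $\sum_{j,k}\|\nu_{j,k}\|\le 1$, only countably many pieces are non-trivial, and a final diagonal extraction organizes everything into a single subsequence.

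Set $\xi^*=\{\widetilde\nu_{j,k}:\|\nu_{j,k}\|>0\}$ and $\Theta^*=[\xi^*,\beta]$. Passing $\sum_j\lambda_{n,j}^{\ssup 1}\le\beta_n$ to the limit (monotone in $(j,k)$, weague on each piece, weak for $\beta_n$) yields $\sum_{j,k}\widetilde\nu_{j,k}^{\ssup 1}\le\beta$, so $\Theta^*\in\mathscr X$. It remains to verify $\Lambda_{f_r,W_r}(\xi_n)\to\Lambda_{f_r,W_r}(\xi^*)$ for every $r$. For fixed $r$, the function $W_r(y_1,\dots,y_{k(r)})$ vanishes as $\sup_{i,j}|y_i-y_j|\to\infty$; this decay is precisely what causes \eqref{metric4} to decouple in the limit, killing both the cross-terms coming from $k(r)$-tuples drawn from distinct orbits or from widely separated pieces of one original orbit, and the contributions from mass that went to dust (since a bounded tuple of points is unlikely to lie in a diffuse density). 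Each recaptured piece $\widetilde\nu_{j,k}$ contributes its own $\Lambda_{f_r,W_r}$-integral, giving the required convergence.

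\textbf{Main obstacle.} The delicate step is the simultaneous concentration-compactness iteration: countably many orbits, each possibly splitting into countably many limit pieces, must be handled through a single coherent diagonal choice of representatives, translations, and subsequences. The extra space $X$ is tamed by the weague topology of Section~\ref{sec:weague} together with the domination $\lambda_{n,j}^{\ssup 1}\le\beta_n$, but the fine decomposition in the $\R^d$-direction essentially generalizes the machinery of \cite{MV14}. Verifying that the chosen representatives yield convergence of every $\Lambda_{f_r,W_r}$—not merely existence of weague limits orbit by orbit—is the crux, since one must rule out spurious cross-contributions produced by mass exchange between orbits during the extraction.
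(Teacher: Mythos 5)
Your proposal follows essentially the same route as the paper: Prohorov for $\beta_n$, a concentration--compactness decomposition of each orbit into widely separated pieces plus possible ``dust'' (handled via the weague topology and the domination $\widetilde\lambda^{\ssup 1}\le\beta$), diagonal extraction, and decoupling of the $\Lambda_{f_r,W_r}$ functionals via the wide-separation lemma, with the countably-many-orbits issue controlled by mass ordering (the paper instead truncates at mass $\eps$ and uses $k(r)\ge 2$ to bound the tail by $\eps$). Both arguments ultimately defer the finest details to the machinery of \cite{MV14}, so your sketch is at the same level of rigor as the paper's own proof.
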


The proof of the above result needs the following notion of wide separation of measures. 
We remind the reader that for any measure $\gamma\in \Mcal_{\leq 1}(X\otimes \R^d)$, $\gamma^{\ssup 2}$ is the marginal of $\gamma$ on $\R^d$. 
Then two sequences of sub-probability measures $\gamma_n$  and $\delta_n$ on $X\otimes R^d$  are said to be {\it{widely separated}} if 
$$\lim_{n\to\infty}\int V(y_1-y_2)\gamma_n^{\ssup 2}(\d y_1)\delta_n^{\ssup 2}(\d y_2)=0$$
for some strictly positive continuous $V$ on $\R^d$ that tends to $0$ at $\infty$. This requirement is equivalent to requiring (see \cite[Lemma 2.4]{MV14})
\begin{equation}
 \lim_{n\to\infty}\int W(y_1, y_2)\gamma_n^{\ssup 2}(\d y_1)\delta_n^{\ssup 2}(\d y_2)=0 \label{decay}
\end{equation}
 for all $W\in {\cC}_2$.

\begin{lemma}\label{wide}
Let $(\gamma_n)_n$  and $(\delta_n)_n$  be widely separated sequences in $\Mcal_{\leq 1}(X\otimes \R^d)$. If $f$ is bounded and continuous on $X$ and $W\in {\cC}_k$
for some $k\ge 2$,
\begin{align}
\lim_{n\to\infty}\bigg|\int& f(x_1)\cdots f(x_k) W(y_1,\ldots, y_k)[\gamma_n+\delta_n](\d x_1, \d y_1)\cdots[\gamma_n+\delta_n](\d x_k,\d y_k)\notag\\
&-\int f(x_1)\cdots f(x_k) W(y_1,\ldots, y_k)\gamma_n(\d x_1, \d y_1)\cdots\gamma_n(\d x_k,\d y_k)\notag\\
&-\int f(x_1)\cdots f(x_k) W(y_1,\ldots, y_k)\delta_n(\d x_1, \d y_1)\cdots \delta_n(\d x_k,\d y_k)\bigg|=0
\end{align}
\end{lemma}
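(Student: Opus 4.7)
The plan is to expand $[\gamma_n+\delta_n]^{\otimes k}$ via the multinomial identity
$$[\gamma_n+\delta_n]^{\otimes k}=\sum_{I\subseteq \{1,\ldots,k\}}\,\bigotimes_{i\in I}\gamma_n\otimes \bigotimes_{j\notin I}\delta_n.$$
The two extreme terms $I=\{1,\ldots,k\}$ and $I=\emptyset$ reproduce the $\gamma_n^{\otimes k}$ and $\delta_n^{\otimes k}$ contributions which are subtracted off. Hence the target expression equals, in absolute value, at most the sum over the $2^k-2$ mixed subsets $\emptyset\subsetneq I\subsetneq\{1,\ldots,k\}$ of the integrals
$$A_n(I)=\int f(x_1)\cdots f(x_k)\,W(y_1,\ldots,y_k)\,\bigotimes_{i\in I}\gamma_n(\d x_i,\d y_i)\bigotimes_{j\notin I}\delta_n(\d x_j,\d y_j).$$
It therefore suffices to prove $A_n(I)\to 0$ for every such mixed $I$, and sum the resulting finitely many bounds.

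Fix such an $I$ and pick a pair of distinguished indices $i_0\in I$ and $j_0\in I^{c}$. Since $|f|\le M:=\|f\|_\infty$ and, after a harmless rescaling, $|W|\le 1$, integrating out the $x$-coordinates yields
$$|A_n(I)|\le M^k\,J_n(I),\qquad J_n(I):=\int |W(y_1,\ldots,y_k)|\,\bigotimes_{i\in I}\gamma_n^{\ssup 2}(\d y_i)\bigotimes_{j\notin I}\delta_n^{\ssup 2}(\d y_j).$$
Given $\varepsilon>0$, use the defining property of $W\in \mathcal C_k$: there exists $R>0$ such that $|W(y_1,\ldots,y_k)|<\varepsilon$ whenever $|y_{i_0}-y_{j_0}|>R$ (this forces $\sup_{i,j}|y_i-y_j|>R$). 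Splitting $J_n(I)$ over the regions $\{|y_{i_0}-y_{j_0}|>R\}$ and its complement and bounding the superfluous integrations against the sub-probability masses by $1$, one obtains
$$J_n(I)\le \varepsilon \,+\,\int \mathbf 1\big\{|y_{i_0}-y_{j_0}|\le R\big\}\,\gamma_n^{\ssup 2}(\d y_{i_0})\,\delta_n^{\ssup 2}(\d y_{j_0}).$$

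To bound the indicator, invoke the wide separation hypothesis directly: choose a strictly positive continuous $V$ on $\R^d$ vanishing at infinity with $\int V(y_1-y_2)\,\gamma_n^{\ssup 2}(\d y_1)\,\delta_n^{\ssup 2}(\d y_2)\to 0$. Since $c:=\inf_{|y|\le R} V(y)>0$, we have $\mathbf 1\{|y_{i_0}-y_{j_0}|\le R\}\le c^{-1}\,V(y_{i_0}-y_{j_0})$, so the last integral tends to $0$ as $n\to\infty$. Consequently $\limsup_n J_n(I)\le \varepsilon$, and since $\varepsilon$ was arbitrary, $J_n(I)\to 0$. The main conceptual point — the only place that is more than routine — is the reduction from the fully $k$-dimensional decay of $W$ to a purely two-point statement about the marginals $\gamma_n^{\ssup 2},\delta_n^{\ssup 2}$; this reduction works because the $\mathcal C_k$ condition is that $W$ vanishes once \emph{any} pair of coordinates separates, which lets us use the chosen pair $(i_0,j_0)$ exclusively. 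Summing the bounds $|A_n(I)|\le M^k J_n(I)$ over the finitely many mixed $I$'s completes the proof.
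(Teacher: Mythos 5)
Your proof is correct and follows essentially the same route as the paper: expand the $k$-fold product, observe that the two pure terms cancel, and show that each mixed cross term vanishes because $W\in\cC_k$ decays as soon as any single pair of coordinates separates, which reduces the estimate to the marginals $\gamma_n^{\ssup 2},\delta_n^{\ssup 2}$. The only difference is that where the paper dominates $|W(y_1,\dots,y_k)|$ by $V(y_i-y_j)$ for some $V\in\cC_2$ and invokes the equivalence \eqref{decay} from \cite[Lemma 2.4]{MV14}, you re-derive the needed implication directly from the single strictly positive $V$ in the definition of wide separation via an $\varepsilon$--$R$ splitting, making the argument self-contained.
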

\begin{proof}
Let us expand the product $[\gamma_n+\delta_n](\d x_1, \d y_1)\cdots[\gamma_n+\delta_n](\d x_k,\d y_k)$. We need to show  that the cross  terms  that  involve any  pair  $\gamma_n (\d x_i, \d y_i)\cdots\delta_n(\d x_j, \d y_j)$ tend to $0$ with $n$.  For any $k$, any $W\in{\cC}_k$ and  $1\le i<j\le k$, there is a  $V\in{\cC}_2$ such that  $|W(y_1,\ldots, y_k)|\le V(y_i-y_j)$. Then from equation \eqref{decay} it follows that 
\begin{align*}
\bigg |\int  f(x_1)\cdots f(x_k) &W(y_1,\ldots, y_k)\cdots\gamma_n (\d x_i, \d y_i)\cdots\delta_n(\d x_j, \d y_j)\cdots\bigg|\\
&\le \int
V(y_i-y_j)\gamma_n ^{\ssup 2}(\d y_i)\delta_n^{(2)}(\d y_j)\to 0
\end{align*}

\end{proof}
We now turn to 
\begin{proof}[\bf{Proof of Theorem \ref{main2}}:]The proof is carried out in several steps. We will choose subsequences repeatedly and use  the diagonalization process.  We will not use multilevel subscripts but use $n$ as the index in the sequences all the time. Let $\Theta_n=[\xi_n, \beta_n]$ be given so that $(\beta_n)_n$ is uniformly tight in the weak topology in $\Mcal_1(X)$. 

\medskip\noindent
{\bf Step 1.}  By our assumption, choosing a subsequence we can assume that $\beta_n$ converges weakly to $\beta$ as probability distributions on $X$.
Assume that $\xi_n$ consists of a single orbit ${\widetilde\lambda}_n$.  Let $q_n(\ell)=\sup_{a\in \R^d}
\lambda_n^{\ssup 2}[B(a,\ell)]$ where $\lambda_n\in {\widetilde\lambda}_n$ is any measure on the orbit and $B(a,\ell)=\{y \in \R^d:|y-a|\le \ell\}$. Without loss of generality, by taking subsequences,  we can assume that $\lim_{n\to\infty}q_n(\ell)=q(\ell)$ exists for every positive integer $\ell$. Let $q=\lim_{\ell\uparrow \infty}q(\ell)$. Assume that $p=\lim_{n\to\infty}\lambda_n(X\otimes \R^d)$ exists. Then $q\le p$.

\medskip\noindent
{\bf Step 2.}  If $q=0$, and $\lambda_n^{\ssup 2}\in {\widetilde\lambda}_n$ then $\lambda_n^{\ssup 2}\vague 0$ in the vague topology and 
$$\lim_{n\to\infty}\int W(y_1,\ldots, y_k)\lambda_n^{\ssup 2}(dy_1)\cdots\lambda_n^{\ssup 2}(dy_k)=0$$
for every $W\in{\cW}_k$. From the definition of $\mathscr D$ It is now easy to check that $\mathscr D(\Theta_n,\Theta)\to 0$ where $\Theta=[\emptyset,\beta]$.

\medskip\noindent
{\bf Step 3.} If $q=p$, then $\lambda_n\star\delta_{a_n}\weak\lambda$ weakly for a suitable choice of $a_n$. Again we can verify that $\mathscr D(\Theta_n,\Theta)\to 0$ where $\Theta=[\xi,\beta]$ with $\xi$ consisting of the single orbit ${\widetilde\lambda}=\{\lambda\star\delta_a\}$, $a\in \R^d$.

\medskip\noindent
{\bf Step 4.} Assume  $0<q<p$. Choose $\ell_0$ so that $q(\ell_0) >\frac{3q}{4}$.
Pick $n_0$ large enough such  that  for $n\ge n_0$, $q_n(\ell_0)>\frac{3q}{4}$. Since $q_n(\ell)=\sup_{\lambda\in {\widetilde\lambda_n}}\lambda(B(0,\ell))$, for $n\ge n_0$,  we can find   $\lambda_n\in {\widetilde\lambda}$  such that $\lambda_n^{\ssup 2} [B(0,\ell_0)]\ge \frac{q}{2}$.  Assume, by selecting a subsequence, that $\lambda_n^{\ssup 2}$
has a vague limit $\alpha$. Clearly  $\frac{q}{2}\le \alpha(\R^d)\le q$. 

Since $\lim_{n\to\infty}\lambda_n^{\ssup 2}(B(0, r))=\alpha(B(0,r))\le \alpha(\R^d)$, for  any given $r$, we can find $n_r$ such that  for $n\ge n_r$, $\lambda_n^{\ssup 2}(B(0, r))\le \alpha(\R^d)+\frac{1}{r}$. We can assume without loss of generality that $n_r$ is increasing in $r$. 

If $\rho_n=\{\sup r: n_r\le n\}$, then $\rho_n\to\infty$ and $\lambda_n^{\ssup 2}[B(0,\rho_n)]\to \alpha(\R^d)$. The restriction of $\lambda_n$ to $B(0,\rho_n)$ will converge weakly to $\alpha$. Let us denote the restrictions of  $\lambda_n$ to $X\otimes B(0,\rho_n)$ and its complement $X\otimes [B(0,\rho_n)]^c$ by $\gamma_{1,n}$ and $\delta_{1,n}$ respectively, so that $\lambda_n=\gamma_{1,n}+\delta_{1,n}$. Then a suitable subsequence of $\gamma_{1,n}$ will have a weak limit $\gamma_1$ on $X\otimes \R^d$ and $\lim_{n\to\infty}\int V(y_1-y_2)\delta_{1,n}(\d y_1)\gamma_{1,n}(\d y_2)=0$.

The orbit ${\widetilde\gamma}_1$ of $\gamma_1$  is a member of the set $\xi$. We work with $\delta_{1,n}$ and extract in the same way a limit ${\widetilde\gamma}_2$ and a leftover piece $\delta_{2,n}$. The process ends when $q=0$ at some stage and we end up with a finite set $\xi$ of $\{{\widetilde\gamma}_j\}$. If we do not end at a finite stage we end up with a countable set. In either case $\sum_{{\widetilde\gamma}_j\in \xi}\gamma_j^{\ssup 1}\le \beta$.

\medskip\noindent
{\bf Step 5.}  In estimating the distances $\mathbf D^\ast(\xi_1, \xi_2)$ if  either  $\xi_1$ or $\xi_2$ consists of several orbits $\{{\widetilde\lambda}\}$, we can afford to  ignore orbits of measures with  total mass at most $\eps$. Since $k(r)\ge 2$, their total contribution to $\mathbf D^\star$ is at most

$$
\sum_{r\ge 2}\sum_{{\widetilde \lambda\colon}\atop  { {\widetilde \lambda}(X\otimes \R^d)\le \eps}}
\frac{[{\widetilde \lambda}(X\otimes \R^d)   ]^{k(r)}}{2^r}\le\sum_{ {{\widetilde \lambda\colon}\atop  { {\widetilde \lambda}(X\otimes \R^d)\le \eps}}}  [{\widetilde \lambda}(X\otimes \R^d)   ]^2\le \eps \sum_{ {{\widetilde \lambda\colon}\atop  { {\widetilde \lambda}(X\otimes \R^d)\le \eps}}}  {\widetilde \lambda}(X\otimes \R^d)   \le \eps
$$

We need to examine  at most $\eps^{-1}$ orbits  in each $\xi_n$. Taking a subsequence we can assume that  the   number of such  orbits $m$  is  the same  in each $\xi_n$ and link them as ${\widetilde\lambda}_{i,n}$ and deal with each sequence on its own. As limit along subsequences they will each generate a  collection of orbits and their union, denoted by $\xi_\eps$, will again be a collection of orbits.  In the end we can let $\eps\to 0$ 
so that $\xi_\eps$ increases to a limiting collection of orbits $\xi$. Since we have uniform control over the combined  contributions of all orbits with small masses, we can now pass to the limit  
$\eps\to 0$ . Lemma \ref{wide} will now allow us to show that we have convergence in the metric $\mathscr D$. 
From now on the details are identical to the proof of 
\cite[Theorem 3.2]{MV14} except we now have $X\otimes \R^d$  instead of $\R^d$. We omit the details to avoid repetition. 

\end{proof}

The following result is an immediate consequence of Theorem \ref{main2}.
\begin{cor}\label{cor:main2}
Let $\{\Pi_n\}$ be a sequence of probability distributions on $\mathscr X$. For any $\Theta=[\xi,\beta] \ in \mathscr X$ if 
the distributions $\{\widehat{\Pi}_n\}$  of $\beta$ under $\Pi_n$ is uniformly  tight as a subset of ${\cM}_1(X)$ then  so is $\{\Pi_n\}$ as a subset of  ${\cM}_1(\mathscr X)$. 
\end{cor}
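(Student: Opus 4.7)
The plan is to reduce the corollary to a direct application of Theorem \ref{main2} via Prokhorov's characterization of tightness. First I note that, directly from the definition \eqref{metric1} of the metric $\mathscr D = \mathbf D^\star + \mathrm{d}$, the projection
\[
\pi:\mathscr X\longrightarrow \Mcal_1(X),\qquad [\xi,\beta]\longmapsto \beta,
\]
is $1$-Lipschitz, hence continuous (and so Borel measurable). Consequently, for every sequence of probability measures $\Pi_n$ on $\mathscr X$, the image measures $\widehat\Pi_n=\Pi_n\circ\pi^{-1}$ are well-defined as Borel probability measures on $\Mcal_1(X)$, and they coincide with the distributions of $\beta$ under $\Pi_n$ as in the statement.

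Next, given $\eps>0$, I use the hypothesized uniform tightness of $\{\widehat\Pi_n\}$ to choose a compact set $C_\eps\subset \Mcal_1(X)$ (in the weak topology on $\Mcal_1(X)$) with $\widehat\Pi_n(C_\eps)\ge 1-\eps$ for every $n$. The natural candidate tight set in $\mathscr X$ is then
\[
K_\eps:=\pi^{-1}(C_\eps)=\bigl\{\Theta=[\xi,\beta]\in\mathscr X\colon \beta\in C_\eps\bigr\},
\]
which is closed in $(\mathscr X,\mathscr D)$ by continuity of $\pi$. As $\Theta=[\xi,\beta]$ runs through $K_\eps$, the second coordinate $\beta$ stays inside the compact, hence uniformly tight, set $C_\eps\subset \Mcal_1(X)$; therefore Theorem \ref{main2} applies and guarantees that every sequence in $K_\eps$ admits a subsequence convergent in the metric $\mathscr D$. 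Since $\mathscr X$ is a metric space, sequential relative compactness together with closedness gives that $K_\eps$ is compact in $(\mathscr X,\mathscr D)$.

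Finally, I read off tightness directly: for every $n$,
\[
\Pi_n(K_\eps)\;=\;\Pi_n\bigl(\pi^{-1}(C_\eps)\bigr)\;=\;\widehat\Pi_n(C_\eps)\;\ge\;1-\eps,
\]
so $\{\Pi_n\}$ is uniformly tight in $\Mcal_1(\mathscr X)$, as desired. The main conceptual content of the argument is Theorem \ref{main2} itself; the only thing to check here is the continuity of the marginal map $\pi$, after which the corollary follows by a standard pullback-of-tight-sets argument. No technical obstacle remains, provided one is willing to take for granted that $(\mathscr X,\mathscr D)$ is a bona fide metric space (already verified via Lemma \ref{lemma-metric}) and that the projection onto the $\beta$-coordinate is continuous, which is immediate from the defining formula of $\mathscr D$.
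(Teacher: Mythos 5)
Your proof is correct and follows exactly the route the paper intends: the paper states the corollary as an immediate consequence of Theorem \ref{main2}, and your argument (continuity of the marginal projection from the additive form of $\mathscr D$, pullback of a tight compact family $C_\eps$, closedness plus the sequential compactness from Theorem \ref{main2} to get compactness of $K_\eps=\pi^{-1}(C_\eps)$, and the identity $\Pi_n(K_\eps)=\widehat\Pi_n(C_\eps)\ge 1-\eps$) is precisely the standard filling-in of that claim. No gaps.
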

\begin{remark}\label{rmk:main2}
From the definition (\ref{metric1}) and (\ref{metric3}) of the metric $\mathscr D$ on $\mathscr X$ it follows that for any continuous function $V$ on $\R^d$ that  tends to $0$  at $\infty$ the  function 
$$\Psi(V, \Theta)=\sum_{\widetilde\lambda\in \xi}\int V(y_1-y_2){\widetilde\lambda}^{\ssup 2}(\d y_1){\widetilde\lambda}^{\ssup 2}(\d y_2)$$
is a bounded continuous function of $\Theta\in \mathscr X$, where $\Theta=[\xi,\beta]$, $\xi=\{\widetilde \lambda_j\}$ and $\widetilde\lambda^{\ssup 2}$ is the marignal of any $\widetilde\lambda\in \xi$ on $\R^d$.\qed
\end{remark}

\section{Stationary Processes and Processes with Stationary Increments}\label{sec-stationarity}

\subsection{Some notation.}

We start with the space $\Omega=\{\omega: (-\infty, \infty)\to \R^d: \omega(\cdot) \,\,\mathrm{continuous}\}$ of $\R^d$-valued continuous functions on $(-\infty,\infty)$, which, 
equipped with the topology of uniform convergence on bounded intervals, 
 is a complete separable metric space. The Borel $\sigma$-field of $\Omega$, denoted by $\cF$, is generated by $\{\omega(t): -\infty<t<\infty\}$.
If $-\infty<a<b<\infty$ the $\sigma$-field  ${\cF}_{[a,b]}$ is the one generated by the increments $\{\omega(t)-\omega(s)\}$, $a\le s<t\le b$,
and ${\cF}_{\mathrm{inc}}=\sigma(\cup_{-\infty<a<b<\infty}{\cF}_{[a,b]})$ is the $\sigma$-field of increments of $\omega$.
If we denote by
 $$
 \Omega_0=\{\omega\in \Omega \in\Omega: \omega(0)=0\}\subset \Omega, 
 $$
then $\Omega_0$ can be identified (via a one-to-one map) with the {\it{space of increments}}, i.e. continuous functions 
$$
h(s,t): \R\times \R\to \R^d\qquad \mbox{such that}\quad h(s,t)+h(t,u)=h(s,u).
$$
Alternatively, we can also view the space of such functions $h(\cdot,\cdot)$ as equivalence classes $\Omega/\sim$ modulo constants, i.e., 
for $\omega,\omega^\prime \in\Omega$, we declare $\omega\sim \omega^\prime$ if for some constant $a\in \R^d$, $\omega(t)=\omega^\prime(t)+a$ for all $t\in \R$. 
Topology on $\Omega_0$ is the natural one of uniform convergence on bounded subsets. 

Next we remark that we can identify  $\Omega $ with $\Omega_0\otimes \R^d$ by  mapping  
$$
\Omega\ni \omega\leftrightarrow (\omega^\prime,a) \qquad\mbox{where}\qquad a=\omega(0),\,\,\omega^\prime(t)=\omega(t)-\omega(0).
$$
Thus, a sub-probability measure $\Q$ on $\Omega$ can be viewed as a measure on $\Omega_0\times \R^d$ and  will then have marginals ${\Q}^{\ssup 1} \in \Mcal_{\leq 1}(\Omega_0)$, 
${\Q}^{\ssup 2} \in \Mcal_{\leq 1}(\R^d)$, respectively. Note that the marginal ${\Q}^{\ssup 1}$ is just the distribution of the increments of a process that has  $\Q$ for its distribution on $\cF$,

\subsection{Stationary cocycles and group actions of $(S_t)_{t\in\R}$ on $\Omega$ and $(\theta_t)_{t\in\R}$ on $\Omega_0\otimes \R^d$.}

We have the group of time translations  $(S_t)_{t\in \R}$ acting on $\Omega$ as
\begin{equation}\label{eq:St}
S_t\colon \Omega\to \Omega\qquad (S_t \omega)(s)=\omega(s+t)
\end{equation}
while the group $(\theta_t)_{t\in \R}$ acts on $\Omega_0\times R^d$ as well as $\Omega_0$ alone as
\begin{equation}\label{eq:thetat}
\begin{aligned}
&\theta_t:\Omega_0\otimes \R^d\to \Omega_0\otimes \R^d\quad \mbox{with}\,\,(\theta_t)(\omega,x)=(\omega_t, x_t) \\
&\mbox{where}\quad \omega_t(s)=\omega(t+s)-\omega(t),\,\,\mbox{and}\,\,\,x_t=x+\omega(t) \quad\mbox{and} \\
&\theta_t:\Omega_0\to \Omega_0 \qquad \mbox{with}\,\,(\theta_t\omega)(s)=\omega_t(s).
\end{aligned}
\end{equation}
Note that $S_t$-invariant probability measures on $\Omega$ are precisely $\theta_t$-invariant probability measures on $\Omega_0\otimes \R^d$,
and are called {\it{stationary processes}} and are denoted by $\Mcal_{\mathrm{s}}(\Omega_0\otimes\R^d)$. On the other hand, 
$\theta_t$-invariant probability measures on $\Omega_0$ are {\it{processes with stationary increments}}, and are denoted by $\Mcal_{\mathrm{si}}(\Omega_0)$.  
We can have a probability measure on $\Omega_0\otimes \R^d$ whose marginal on $\Omega_0$ is $\theta_t$-invariant but it is not $\theta_t$-invariant on $\Omega_0\times R^d$. 
These are precisely {\it{non-stationary processes with stationary increments}}.  The following lemma provides a useful  
criterion that determines if a process $\beta$ with stationary increments is the process of increments of a stationary process $\Q$. 
We will see that even if it is, it is not unique, and in fact,  it is the {\it{entire orbit}} $\{\Q\star\delta_a\colon \, a\in \R^d\}$ where $\Q\star\delta_a$ is given by
$(\Q\star\delta_a)[A\otimes B]=\Q[A\otimes (B-a)]$on $\Omega_0\otimes \R^d$ and by $(\Q\star\delta_a)[A]=\Q[A-a]$ on $\Omega$.

\begin{theorem}\label{lemma:cocycle}
Let $\beta$ be an ergodic process with stationary increments (i.e., $\beta\in \Mcal_{\mathrm{si}}(\Omega_0)$ is a $\theta_t$-invariant and ergodic probability distribution on  $\Omega_0$). Then either
\begin{equation}\label{eq:cocycle}
\lim_{\eps\to 0}\E^{\beta}\bigg[\eps \int_0^\infty \e^{-\eps t} V(\omega(t)-\omega(0)) \d t\bigg]=0
\end{equation}
for all continuous  functions $V: \R^d\to \R$ with $\lim_{|x|\to\infty} |V(x)|=0$
or there is a $\theta_t$-invariant probability distribution $\Q\in \Mcal_{\mathrm s}(\Omega_0\otimes\R^d)$ such that ${\beta}={\Q}^{\ssup 1}$, i.e. $\Q$ is a stationary process on $\Omega$ with 
$\beta$ being the distribution of its increments.
\end{theorem}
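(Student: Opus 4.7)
The plan is to construct a candidate stationary extension $\Q$ as a weague subsequential limit of an explicit resolvent family of probability measures, and to use ergodicity of $\beta$ together with Lemma \ref{lemma-weague} to establish the dichotomy. For each $\eps>0$, define the probability kernel
\[
\mu_\eps(\omega,\d a)=\eps\int_0^\infty \e^{-\eps t}\,\delta_{-\omega(t)}(\d a)\,\d t,\qquad \omega\in\Omega_0,
\]
and the probability measure $\Q_\eps(\d\omega,\d a)=\beta(\d\omega)\,\mu_\eps(\omega,\d a)$ on $\Omega_0\otimes\R^d$. Then $\Q_\eps^{\ssup 1}=\beta$, and the expression in \eqref{eq:cocycle} equals $\int V(-a)\,\Q_\eps^{\ssup 2}(\d a)$, so that the first alternative is exactly the statement $\Q_\eps^{\ssup 2}\vague 0$ vaguely on $\R^d$.

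\textbf{Almost-invariance.} The substitution $u=t-s$ in the integral defining $\mu_\eps$, combined with the cocycle identity $\omega(t)-\omega(s)=(\theta_s\omega)(t-s)$, yields for any bounded continuous $g$ on $\R^d$
\[
\int g\big(a+\omega(s)\big)\,\mu_\eps(\omega,\d a)=\e^{-\eps s}\int g(a)\,\mu_\eps(\theta_s\omega,\d a)+R_\eps(\omega,s,g),
\]
where the remainder comes from the integral over $u\in[-s,0]$ and satisfies $|R_\eps(\omega,s,g)|\le \|g\|_\infty(1-\e^{-\eps s})$. Integrating against $F(\theta_s\omega)\beta(\d\omega)$ for a bounded continuous $F$ on $\Omega_0$ and using $\theta_s$-invariance of $\beta$ to change variables $\omega'=\theta_s\omega$ gives
\[
\int F(\omega')g(a)\,(\theta_s^*\Q_\eps)(\d\omega',\d a)=\e^{-\eps s}\int F(\omega')g(a)\,\Q_\eps(\d\omega',\d a)+O(1-\e^{-\eps s}),
\]
so $\Q_\eps$ is approximately $\theta_s$-invariant as $\eps\downarrow 0$.

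\textbf{Weague limit and first marginal.} Since $\Q_\eps^{\ssup 1}=\beta$ is a fixed probability measure, the family $\{\Q_\eps\}$ has uniformly tight marginals on $\Omega_0$, and Lemma \ref{lemma-weague} supplies a sequence $\eps_n\downarrow 0$ along which $\Q_{\eps_n}\to \Q$ weaguely for some $\Q\in\Mcal_{\le 1}(\Omega_0\otimes\R^d)$. Functions of product form $F(\omega)g(a)$ with $F$ bounded continuous on $\Omega_0$ and $g$ continuous on $\R^d$ vanishing at infinity are valid weague test functions, so the almost-invariance identity passes to the limit to give $\theta_s^*\Q=\Q$. The same class of test functions, together with the pointwise bound $\int F g\,\d\Q_\eps\le \|g\|_\infty\int F\,\d\beta$ and monotone convergence $g\uparrow 1$, gives $\Q^{\ssup 1}\le \beta$. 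Hence $\Q^{\ssup 1}$ is a $\theta_t$-invariant sub-probability measure whose Radon--Nikodym derivative with respect to $\beta$ lies in $[0,1]$ and is $\theta_t$-invariant $\beta$-a.s.; by ergodicity of $\beta$ it is $\beta$-a.s.\ a constant, so $\Q^{\ssup 1}=c\beta$ for some $c\in[0,1]$.

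\textbf{Conclusion of the dichotomy.} If $c>0$, then $\widetilde\Q:=c^{-1}\Q$ is a $\theta_t$-invariant probability measure on $\Omega_0\otimes\R^d$ with first marginal exactly $\beta$, giving the second alternative of the theorem. If on the other hand $c=0$ for every weaguely convergent subsequence of $(\Q_\eps)_\eps$, the standard subsequence principle forces the full family $\Q_\eps^{\ssup 2}$ to converge vaguely to $0$ on $\R^d$, so $\int V(-a)\,\Q_\eps^{\ssup 2}(\d a)\to 0$ for every continuous $V$ vanishing at infinity, which is \eqref{eq:cocycle}. The main obstacle is the use of ergodicity to pin down the first marginal as $c\beta$: identifying $\Q^{\ssup 1}$ as dominated by $\beta$ requires a careful use of product-form weague test functions $F\otimes g$, and it is only through this identification that a mere sub-probability weague limit can be promoted, after normalization, to a genuine stationary extension of $\beta$.
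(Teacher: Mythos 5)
Your construction is, up to a harmless time reversal, the same as the paper's: your $\Q_\eps$ is the law of $(\omega,-\omega(T_\eps))$ with $T_\eps$ an independent exponential, while the paper's resolvent measure $\nu_\eps=\eps\int_0^\infty\e^{-\eps t}(\Q_0\theta_t^{-1})\,\d t$ is the law of $(\theta_{T_\eps}\omega,\omega(T_\eps))=(\omega',-\omega'(-T_\eps))$ after using $\theta_t$-invariance of $\beta$. The weague compactness via Lemma \ref{lemma-weague}, the domination $\Q^{\ssup 1}\le\beta$, the ergodicity step giving $\Q^{\ssup 1}=c\beta$, and the subsequence dichotomy are all as in the paper and are correct.

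There is, however, one genuine gap, and it sits exactly at the step the paper spends most of its effort on. You assert that the almost-invariance identity ``passes to the limit'' because product functions $F\otimes g$ are valid weague test functions. That justifies convergence of the \emph{right-hand} side, $\int F\otimes g\,\d\Q_{\eps_n}\to\int F\otimes g\,\d\Q$. But the \emph{left-hand} side is
$\int F(\theta_s\omega)\,g(a+\omega(s))\,\Q_{\eps_n}(\d\omega,\d a)$, and the integrand $G(\omega,a)=F(\theta_s\omega)g(a+\omega(s))$ is \emph{not} a weague test function: since $\omega(s)$ is unbounded as $\omega$ ranges over $\Omega_0$, one has $\sup_{\omega}|G(\omega,a)|=\|F\|_\infty\|g\|_\infty$ for every $a$, so the required decay $\lim_{|a|\to\infty}\sup_\omega|G(\omega,a)|=0$ fails and weague convergence does not give $\int G\,\d\Q_{\eps_n}\to\int G\,\d\Q$. (Equivalently: $\theta_s^*$ does not automatically preserve weague convergence.) The repair is the truncation the paper carries out in \eqref{E3.2}--\eqref{E3.3}: pick a compactly supported $0\le\chi\le1$ with $\int(1-\chi(\omega(s)))\,\beta(\d\omega)\le\delta$, note that $\chi(\omega(s))G(\omega,a)$ \emph{is} a weague test function, and control the truncation error uniformly in $n$ using $\Q_{\eps_n}^{\ssup 1}=\beta$ and, for the limit, $\Q^{\ssup 1}\le\beta$ (which you do establish beforehand, so the order of your argument supports this fix). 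Without this step the proof is incomplete; with it, your argument coincides with the paper's.
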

\begin{remark}
For the proof of the above result it is enough to check the condition \eqref{eq:cocycle} for one strictly positive $V$.
\end{remark}
\begin{proof}[\bf{Proof of Theorem \ref{lemma:cocycle}}:] 
Define ${\Q}_0 (\d \omega \d y)={\beta}(\d \omega)\otimes \delta_0(\d y) \in \Mcal_1(\Omega_0\otimes \R^d)$ and set
\begin{equation}\label{eq:Lambda}
 {\nu}_\eps (\d \omega\d y)=\eps\int_0^\infty \d t \, \e^{-\eps t}\,\, ({\Q}_0 \theta_t^{-1})(\d\omega \d y)
\end{equation}
Since the marginal ${\nu}_\eps^{\ssup 1}$  on $\Omega_0$ of ${\nu}_\eps$ is $\beta$ for every $\eps>0$, the family $\{\nu_\eps^{\ssup 1}\}$ is (weakly) uniformly tight on $\Omega_0$ 
and by Lemma \ref{lemma-weague}, $\{\nu_\eps\}$ has wea-gue limit points. Let $ \Q $ be any nonzero  {\it weague} limit point of ${\nu}_\eps$ as $\eps\to 0$. 
We will show that for any $\tau>0$,  $\theta_\tau \Q= {\Q}$. Then  $\Q$ on $\Omega$   will be stationary and its marginal ${\Q}^{\ssup 1}$ on $\Omega_0$ will  be dominated by  $\beta$.  

To show that $\theta_\tau\Q=\Q$ it is enough to verify  that for any $\tau>0$ and any continuous  $F:\Omega_0\otimes \R^d\to \R$ satisfying 
\begin{equation}\label{eq:F}
\lim_{|x|\to\infty}\sup_{\omega\in \Omega_0}|F(\omega,x)|=0
\end{equation}
we have
\begin{equation}\label{E3.1}
\int_{\Omega_0}\int_{\R^d} F( \theta_\tau \omega, y+\omega(\tau))  {\Q}(\d \omega,\d y)=\int_{\Omega_0}\int_{\R^d}F( \omega, y) {\Q}(\d \omega \d y)
\end{equation}
Actually it is sufficient to prove the claim by taking $F$ of the form $G(\omega)H(y)$ where $G$ is bounded and continuous on $\Omega_0$ and $H$ is continuous and compactly supported on $\R^d$.
%By construction, note that 
 
By construction, we note that for any continuous and bounded function $F$ on $\Omega_0\otimes \R^d$, 
$$
\begin{aligned}
&\bigg| \int_0^\infty F\big(\theta_s(\cdot)\big) \,\,\eps \e^{-\eps s}\,\, \d s - \int_0^\infty F\big(\theta_{\tau+s}(\cdot)\big) \,\,\eps \e^{-\eps s}\,\, \d s \bigg| \\
&=\bigg| \int_0^\tau F\big(\tau_s(\cdot)\big) \,\,\eps \e^{-\eps s}\,\, \d s - \int_t^\infty F\big(\tau_{s}(\cdot)\big) \,\,\big[\eps \e^{-\eps (s-\tau)}- \eps \e^{-\eps s}\,\, \d s \bigg|  \\
&\leq \eps \tau \e^{\eps\tau}\|F\|_\infty+ \|F\|_\infty [\e^{\eps \tau}-1 ],
\end{aligned}
$$
which, together with the definition \eqref{eq:Lambda}, imply that $\| {\nu}_\eps- \theta_\tau  {\nu}_\eps\|\le  |\e^{\tau \eps}-1|+\tau\eps \e^{\tau \eps}$. %and%$\|\mathbf P_\eps- \tau_t\mathbf P_\eps\|\to 0$
Since $ {\nu}_\eps $ converges  wea-guely ( along a subsequence)   to $ \Q$, to show that $\theta_\tau\Q=\Q$, 
it suffices to check that $ {\nu}_\eps \theta_\tau^{-1} \to {\Q}\theta_\tau^{-1}$ wea-guely, which is equivalent to showing
\begin{equation}\label{E3.2}
\int_{\Omega_0}\int_{\R^d} F(\theta_\tau \omega, y+\omega(\tau))  {\nu}_\eps (\d \xi, \d y) \to\int_{\Omega_0}\int_{\R^d} F(\theta_\tau \omega, y+\omega(\tau)) {\Q}(\d \omega, \d y)
\end{equation}
for any continuous  $F:\Omega_0\times \R^d\to \R$ satisfying \eqref{eq:F}. The distribution of  $\omega(\tau)$ under ${\nu}_\eps$ is that of the original increment $\omega(\tau)-\omega(0)$  under $\beta$ and so  given $\delta>0$, there is a function $g(z)$ with compact support in $\R^d$ with $0\le g\le 1$ such that  for all $\eps>0$
\begin{equation}\label{E3.3}
\int (1-g(\omega(\tau)))  {\nu}_\eps^{\ssup 1} (\d\omega)=\int (1-g(\omega(\tau)))\beta(\d \omega)\le \delta
\end{equation} 
and since ${\Q}^{\ssup 1}\le \beta$ we have  $\int (1-g(\omega(\tau)))  {\Q}^{\ssup 1}(\d \omega)\le \delta$.
If  we replace $F(\theta_\tau \omega, y+\omega(\tau)) $ by $G(\omega,a)=g(\omega(\tau)) F(\theta_\tau \omega, y+\omega(\tau))$,  since $\omega(\tau)$ is now  restricted to a bounded set,
we have
\begin{equation*}
\lim_{|y|\to\infty}\sup_\omega |G(\omega,y)|=0
\end{equation*}
and (\ref{E3.2}) follows with the help of (\ref{E3.1}) and (\ref{E3.3}). Thus $\theta_\tau\Q=\Q$ for all $\tau>0$. 

We will now conclude. Note that 
the measures $ {\nu}_\eps $ all have marginals ${\nu}_\eps^{\ssup 1}=\beta$  and the wea-gue limit $\Q $ of $\nu_\eps$ is $\theta_t$-invariant and ${\Q}^{\ssup 1}$  is  dominated by $\beta$. 
Since $\beta$ is  ergodic it is equal to $c\beta$ for some $0\le c\le 1$ and ${1\over c}{\Q}$ is a stationary process with $\beta={1\over c}{\Q}^{\ssup 1}$ provided we can show  that $c> 0$. However if $c=0$, we have 
\begin{equation*}
\lim_{\eps\to 0}\int_{\Omega_0}\int_{\R^d} F(\omega, y) {\nu}_\eps(\d \omega, \d y)=0
\end{equation*}
for continuous functions $F$ satisfying \eqref{eq:F}. In particular $F(\omega,y)$ can be any continuous function $V(y)$ satisfying 
$\lim_{|y|\to\infty}|V(y)|=0$ and hence, 

\begin{equation*}
\begin{aligned}
\int_{\Omega_0}\int_{\R^d} V(y){\nu}_\eps(\d \omega, \d y)&=\int_{\R^d} V(y){\nu}_\eps^{\ssup 2}( \d y)\\
&=\E^\beta\bigg[\eps\int_0^\infty \e^{-\eps t} V(\omega(t)-\omega(0)) \d t\bigg]\to 0
\end{aligned}
\end{equation*}
as $\eps\to 0$. 
\end{proof}

In order to apply Theorem \ref{lemma:cocycle} in the present context, we need another fact, whose proof can be found in \cite[Theorem 4.1]{DV83}.
\begin{lemma}\label{flow}
Let $\mathcal Y$ be a Polish space and $(S_t)_{t\geq 0}$  a one-parameter family of homeomorphisms of $\mathcal Y$. For each $\mathrm y\in \mathcal Y$
let 
\begin{equation*}
\nu_{\eps}(\mathrm y,\cdot)=\eps\int_0^\infty \e^{-\eps t} \delta_{S_t(\mathrm y)}  \d t,
\end{equation*}
i.e., for any $\mathrm y\in \mathcal Y$ and $A\subset \mathcal Y$, $\nu_\eps(\mathrm y,A)= \int_0^\infty \d t \, \eps \e^{-\eps t} \1\{S_t(\mathrm y) \in A\}$. Then the following implications hold.
\begin{itemize}
\item For each $\eps>0$, $\mathrm y\to \nu_{\eps}(\mathrm y,\cdot)$ is a map from $X\to {\mathcal M}_1(\mathcal Y)$ and any weak limit point $\nu=\lim_{\eps\to 0} \nu_\eps$  
is invariant under $S_t$. 
\item Let ${\mu}_n$ be a sequence of  $S_t$-invariant probability 
measures on $\mathcal Y$ converging weakly to $\mu$,  and ${\lambda}_{\eps,\mu_n}$ be the distribution  of 
$\nu_{\eps}(\mathrm y, \cdot)$ on ${\mathcal M}_1(\mathcal Y)$, i.e., 
$$
\lambda_{\eps,\mu_n}(G)= \mu_n\big[\mathrm y \in \mathcal Y\colon \nu_\eps(y,\cdot)\in G\big] \qquad\forall \,\, G\subset \Mcal_1(\mathcal Y).
$$
Then $(\lambda_{\eps,\mu_n})_n$ is uniformly tight on ${\mathcal M}_1(\mathcal Y)$ and for any sequence $\eps_n\to 0$, any weak limit point $\lambda$ of 
${\lambda}_{\eps_n,\mu_n}$ is supported on the space ${\mathcal M}_{\mathrm{inv}}(\mathcal Y)$ of $S_t$-invariant probability measures on $\mathcal Y$. Moreover, 
\begin{equation*}
\int_{{\mathcal M}_{\mathrm{inv}}(\mathcal Y)} \nu\  \lambda(\d \nu)=\mu.\qed
\end{equation*}
\end{itemize}
\end{lemma}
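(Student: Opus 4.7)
The proof hinges on one elementary total-variation estimate together with a baricenter identity that follows from $S_t$-invariance of $\mu_n$. I would begin by verifying that, uniformly in $\mathrm y \in \mathcal Y$,
$$
\bigl|\nu_\eps(\mathrm y,\cdot)S_\tau^{-1}(A)-\nu_\eps(\mathrm y,A)\bigr|\;\le\;1-e^{-\eps\tau}\qquad\text{for every Borel }A\subset\mathcal Y,
$$
obtained by splitting the time integral in $\nu_\eps(\mathrm y,S_\tau^{-1}A)=\eps\int_\tau^\infty e^{-\eps(s-\tau)}\1_{S_s\mathrm y\in A}\,ds$ at $\tau$ and comparing with $\nu_\eps(\mathrm y,A)$. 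Since $S_\tau$ is a homeomorphism, $\nu\mapsto \nu S_\tau^{-1}$ is weakly continuous on $\mathcal M_1(\mathcal Y)$, so passing to a weak limit point $\nu$ of $\nu_\eps(\mathrm y,\cdot)$ as $\eps\to 0$ yields $\nu S_\tau^{-1}=\nu$ for every $\tau>0$. This settles the first bullet.

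For the second bullet I read $\lambda_{\eps,n}\in\mathcal M_1(\mathcal M_1(\mathcal Y))$ as the law of the random measure $\nu_\eps(\mathrm y,\cdot)$ when $\mathrm y\sim\mu_n$. The key identity, using only $\mu_n\circ S_t^{-1}=\mu_n$, is the baricenter formula
$$
\int_{\mathcal M_1(\mathcal Y)}\nu(A)\,\lambda_{\eps,n}(d\nu)\;=\;\eps\int_0^\infty e^{-\eps t}\mu_n(S_t^{-1}A)\,dt\;=\;\mu_n(A).
$$
Uniform tightness of $\{\lambda_{\eps,n}\}$ would then be deduced by a layered Markov bound: for $\delta>0$ choose compacts $K_k\subset\mathcal Y$ with $\mu_n(K_k^c)\le\delta\,2^{-2k}$ for all $n$ (possible by uniform tightness of the weakly convergent family $\{\mu_n\}$); combining this with the baricenter identity gives $\lambda_{\eps,n}(\{\nu:\nu(K_k^c)>2^{-k}\})\le 2^k\mu_n(K_k^c)\le\delta\,2^{-k}$, so the weakly compact set $\{\nu:\nu(K_k^c)\le 2^{-k}\ \forall k\}$ carries $\lambda_{\eps,n}$-mass at least $1-\delta$ uniformly in $\eps$ and $n$.

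To place any weak limit $\lambda$ of $\lambda_{\eps_n,n}$ on $\mathcal M_{\mathrm{inv}}(\mathcal Y)$, I would apply the bounded continuous functional $\Phi_{\tau,f}(\nu)=|\nu S_\tau^{-1}(f)-\nu(f)|$ and invoke the first estimate to obtain $\int\Phi_{\tau,f}\,d\lambda_{\eps,n}\le 2\|f\|_\infty(1-e^{-\eps\tau})$; passing to the limit forces $\int\Phi_{\tau,f}\,d\lambda=0$, and running $\tau$ over positive rationals and $f$ over a countable separating family in $C_b(\mathcal Y)$ yields $\lambda(\mathcal M_{\mathrm{inv}})=1$. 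For the final identity $\int\nu\,\lambda(d\nu)=\mu$, note that for each $f\in C_b(\mathcal Y)$ the map $\nu\mapsto\nu(f)$ is bounded and continuous on $\mathcal M_1(\mathcal Y)$, so the baricenter identity combined with $\mu_n\weak\mu$ gives
$$
\int\nu(f)\,\lambda(d\nu)\;=\;\lim_n\int\nu(f)\,\lambda_{\eps_n,n}(d\nu)\;=\;\lim_n\mu_n(f)\;=\;\mu(f).
$$

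The step I expect to require most care is the meta-level tightness in $\mathcal M_1(\mathcal M_1(\mathcal Y))$: since $\mathcal M_1(\mathcal Y)$ is not locally compact, compact subsets must be exhibited via Prohorov's criterion, and the uniform tightness has to be transferred from $\{\mu_n\}$ through the random pushforward $\mathrm y\mapsto\nu_\eps(\mathrm y,\cdot)$. The baricenter identity together with the layered Markov bound is precisely what makes this lift uniform in both $\eps$ and $n$.
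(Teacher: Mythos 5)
Your proof is correct, and it is essentially the standard Krylov--Bogolyubov-type argument (total-variation estimate on the Abel averages, barycenter identity from $S_t$-invariance of $\mu_n$, and the layered Markov bound to lift tightness to $\mathcal M_1(\mathcal M_1(\mathcal Y))$) that the paper itself does not reproduce but defers to \cite[Theorem 4.1]{DV83}, where the same route is taken. The only point to make explicit is that passing from invariance under a countable set of $\tau$'s to all $\tau>0$ on a single $\lambda$-null set uses continuity of the flow in $t$, which is implicit in the hypothesis.
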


The following corollary is an immediate consequence of Lemma \ref{flow} for the particular case $\mathcal Y=\Omega_0\otimes \R^d$.

\begin{cor}\label{cor:cocycle}
Let ${\Q}_n$ be a  sequence of invariant probability measures on $\Omega_0\otimes \R^d$ such that
their marginals ${\Q}_n^{\ssup 1}\in \Mcal_1(\Omega_0)$ converge weakly to a limit $\beta\in \Mcal_1(\Omega_0)$. For any $\omega\in\Omega_0$ and $\eps>0$ let
\begin{equation}\label{eq:nu}
\nu_\eps(\omega, \cdot)= \nu_\eps\big((\omega,0),\cdot\big)=\eps\int_0^\infty \e^{-\eps t}\delta_{\theta_t(\omega,0)} \d t\in {\cM}_1(\Omega_0\otimes \R^d)
\end{equation}
Let $\eps_n$ be any sequence such that $\eps_n\to 0$ and  $\Pi_n\in {\cM}_1({\cM}_1(\Omega_0\times \R^d))$ is the distribution of $\nu_{\eps_n}$ under $\Q_n$, i.e., $(\omega,0)\in \Omega_0\otimes \R^d$
is sampled according to $\Q_n$ and for any $\mathscr A\subset \Mcal_1(\Omega_0\otimes\R^d)$ we set
\begin{equation}\label{eq:Pi}
\Pi_n(\mathscr A)\stackrel{(def)}= \Q_n \, \big[\nu_{\eps_n}\big((\omega,0),\cdot\big)\in \mathscr A\big].
\end{equation}
Then the projection  ${\widehat\Pi}_n\in {\cM}_1({\cM}_1(\Omega_0))$ of $\Pi_n$ is uniformly tight  and any weak limit point $\widehat\Pi=\lim_{n\to\infty} \widehat\Pi_n$  
is supported on the space of $\theta_t$-invariant (but not necessarily  ergodic) distributions ${\cM}_{\mathrm{si}}(\Omega_0)$ and moreover, 
\begin{equation*}
\lim_{n\to\infty} \Q^{\ssup 1}_n= \beta=\int_{{\cM}_{\mathrm{si}}} \nu\, {\widehat\Pi}(\d \nu).
\end{equation*}
\end{cor}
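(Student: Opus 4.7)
The plan is to deduce Corollary \ref{cor:cocycle} as an application of Lemma \ref{flow} with $\mathcal Y = \Omega_0 \otimes \R^d$ and flow $(S_t)_t = (\theta_t)_t$ defined in (\ref{eq:thetat}), followed by projection from $\Omega_0 \otimes \R^d$ onto $\Omega_0$. The initial point $(\omega,0) \in \mathcal Y$ is sampled from $\Q_n^{\ssup 1}(d\omega) \otimes \delta_0(dy)$; the $\Omega_0$-marginal of $\nu_{\eps_n}(\omega, \cdot)$ is
\begin{equation*}
\hat\nu_{\eps_n}(\omega,\cdot) \;=\; \eps_n \int_0^\infty e^{-\eps_n t}\,\delta_{\theta_t \omega}\, dt \;\in\; \Mcal_1(\Omega_0),
\end{equation*}
and $\widehat\Pi_n$ is the law of $\omega \mapsto \hat\nu_{\eps_n}(\omega,\cdot)$ under $\Q_n^{\ssup 1}$.

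\textbf{Tightness of $\{\widehat\Pi_n\}$.} I would use the standard fact that a family of random probability measures on a Polish space is uniformly tight (as a family in $\Mcal_1(\Mcal_1(\Omega_0))$) once their mean measures form a uniformly tight family in $\Mcal_1(\Omega_0)$. The mean of $\widehat\Pi_n$ is
\begin{equation*}
m_n \;=\; \eps_n \int_0^\infty e^{-\eps_n t}\,\Q_n^{\ssup 1}\theta_t^{-1}\,dt \;\in\; \Mcal_1(\Omega_0).
\end{equation*}
Since $\Q_n^{\ssup 1}\weak \beta$, the family $\{\Q_n^{\ssup 1}\}$ is uniformly tight on $\Omega_0$, and continuity of $\theta_t$ together with the exponential averaging in $t$ transfers this tightness to $\{m_n\}$.

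\textbf{Support and barycenter of any weak limit $\widehat\Pi$.} The first item of Lemma \ref{flow} applied with $\mathcal Y = \Omega_0$ and $S_t = \theta_t$ gives, for each fixed $\omega\in\Omega_0$, that every weak limit point of $\hat\nu_\eps(\omega,\cdot)$ as $\eps\to 0$ is $\theta_t$-invariant. Since $\Mcal_{\mathrm{si}}(\Omega_0)$ is closed in $\Mcal_1(\Omega_0)$, any weak limit $\widehat\Pi$ of $\widehat\Pi_n$ must be supported there. For the barycenter, continuity of $\widehat\Pi\mapsto \int \nu\,\widehat\Pi(d\nu)$ on uniformly tight sets yields $\int \nu\,\widehat\Pi(d\nu)=\lim_k m_{n_k}$ along a subsequence with $\widehat\Pi_{n_k}\weak\widehat\Pi$. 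The estimate $\|m_n-\theta_\tau m_n\|\leq |e^{\eps_n \tau}-1|+\eps_n\tau e^{\eps_n\tau}$ (exactly as in the proof of Theorem \ref{lemma:cocycle}) shows any weak limit of $m_n$ is $\theta_\tau$-invariant for every $\tau>0$; combined with the weak convergence $\Q_n^{\ssup 1}\weak \beta$ one identifies $\lim m_n$ with $\beta$, which is therefore $\theta_t$-invariant and equals the barycenter.

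\textbf{Main obstacle.} The nontrivial part is the tightness of $\{m_n\}$: the exponential weight concentrates mass at times $t$ of order $1/\eps_n\to\infty$, where $\theta_t\omega=\omega(t+\cdot)-\omega(t)$ depends on the path $\omega$ at large times, and this is not a priori controlled by weak convergence $\Q_n^{\ssup 1}\weak\beta$ (whose tightness is only a local, uniform-on-bounded-intervals statement on $\Omega_0$). Exploiting the specific structure of $\theta_t$ as a shift of the origin in the increment space, together with the damping factor $\eps_n e^{-\eps_n t}$, is what makes this tightness go through and is the main technical step of the proof.
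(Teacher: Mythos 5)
Your overall route---Lemma \ref{flow} with $\mathcal Y=\Omega_0\otimes\R^d$, projection to $\Omega_0$, tightness of $\{\widehat\Pi_n\}$ via tightness of the mean measures $m_n$, and the uniform bound $\|m_n-\theta_\tau m_n\|\le|\e^{\eps_n\tau}-1|+\eps_n\tau\,\e^{\eps_n\tau}$---is exactly the one the paper intends (the paper gives no proof, declaring the corollary an immediate consequence of Lemma \ref{flow}). However, the step you single out as the ``main obstacle'' is left unproved, and your diagnosis of why it ``goes through'' is not correct. The missing ingredient is that the marginals $\Q_n^{\ssup 1}$ must be $\theta_t$-invariant on $\Omega_0$, i.e.\ processes with stationary increments. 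This hypothesis is suppressed in the corollary's wording, but it is required by Lemma \ref{flow} (whose second assertion assumes the $\mu_n$ are $S_t$-invariant) and it holds in the only place the corollary is used, namely for $\Q_n\in\mathfrak m_{\eps_n}\subset\Mcal_{\mathrm{si}}(\Omega_0)$ in Theorem \ref{thmain}. Under this hypothesis $\Q_n^{\ssup 1}\theta_t^{-1}=\Q_n^{\ssup 1}$ for every $t$, hence
$$
m_n=\eps_n\int_0^\infty\e^{-\eps_n t}\,\Q_n^{\ssup 1}\theta_t^{-1}\,\d t=\Q_n^{\ssup 1},
$$
so the tightness of $\{m_n\}$ and the identification $\lim_n m_n=\lim_n\Q_n^{\ssup 1}=\beta=\int\nu\,\widehat\Pi(\d\nu)$ are immediate; there is nothing left to prove at that step.

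Without the invariance the statement is simply false, and no appeal to ``the structure of $\theta_t$ together with the damping factor'' can rescue it: the weight $\eps_n\e^{-\eps_n t}$ concentrates on $t\asymp 1/\eps_n$, and for non-invariant $\Q_n^{\ssup 1}$ the pushforwards $\Q_n^{\ssup 1}\theta_t^{-1}$ at such times are not controlled by the (local, uniform-on-compact-time-intervals) tightness coming from $\Q_n^{\ssup 1}\weak\beta$. For instance, taking $\Q_n^{\ssup 1}=\delta_{\omega_0}$ for a fixed path whose increments oscillate ever more wildly at infinity, $\{m_n\}$ need not be tight; and even when $m_n$ converges, its limit is $\theta_t$-invariant by your own shift estimate, so it cannot equal $\beta$ unless $\beta$ is itself invariant---which does not follow from the stated hypotheses. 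You should therefore add the hypothesis $\Q_n^{\ssup 1}\in\Mcal_{\mathrm{si}}(\Omega_0)$ and observe $m_n=\Q_n^{\ssup 1}$. One further small point: the support claim for $\widehat\Pi$ should not be derived from the first bullet of Lemma \ref{flow} applied ``for each fixed $\omega$'' plus closedness of $\Mcal_{\mathrm{si}}(\Omega_0)$, since the starting points are random and vary with $n$; the correct tool is the uniform-in-$\omega$ estimate $\|\hat\nu_{\eps_n}(\omega,\cdot)-\hat\nu_{\eps_n}(\omega,\cdot)\theta_\tau^{-1}\|\le|\e^{\eps_n\tau}-1|+\eps_n\tau\,\e^{\eps_n\tau}$, which you already invoke for the barycenter and which passes to any weak limit of the laws $\widehat\Pi_n$. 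With these two corrections the argument is complete.
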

\qed
\begin{remark}Note that the space ${\cM}_{\mathrm{si}}(\Omega_0)$, the space of processes with stationary increments consists of two sets ${\cM}_{\mathrm{coc}}$ and ${\cM}_{\mathrm{ncoc}}$ corresponding to those that are increments of a stationary process and those that are not.
Thus Corollary \ref{cor:cocycle} implies that the limit $\beta$ (in Corollary \ref{cor:cocycle}) can be written as the sum $\beta=\beta_{\mathrm{coc}}+\beta_{\mathrm{ncoc}}$ where
\begin{equation*}
\beta_{\mathrm{coc}}=\int_{{\cM}_{\mathrm{coc} }} \nu\, \Pi(\d \nu)\ \ {\rm and}\ \ \beta_{\mathrm{ncoc}}=\int_{{\cM}_{\mathrm{ncoc}}} \nu\,\Pi(\d \nu)
\end{equation*}
For any $\nu\in {\cM}_{\mathrm{coc}}$ we have a stationary distribution ${\widetilde\alpha}_\nu=\{\alpha_\nu\star \delta_a\colon a\in \R^d\}$ that are the possible marginals. 
\end{remark}
\qed

\section{Relative entropy estimates and variational arguments.}\label{sec-entropy}
%. The following  lemma will help us  control the singularity of the Coulomb Potential. $\P$ is the process  of Brownian increments. 

\subsection{Relative entropy of processes with stationary increments.}

From now on we shall assume that $d=3$. Let us recall  that for any $a<b$,  we  denote by $\mathcal F_{[a,b]}$  the 
$\sigma$-algebra  generated by all increments $\{\omega(s)-\omega(r)\colon a \leq r < s \leq b\}$ and $\P$ refers to the law of three-dimensional Brownian increments.  
For any process $\Q\in \Mcal_{\mathrm{si}}(\Omega_0)$  with stationary increments, if $\Q_{0,\omega}$ denotes the regular conditional probability distribution  of $\Q$ given $\mathcal F_{(-\infty,0)}$, which is
 the $\sigma-$field generated by $\{(\omega(t)-\omega(s)): -\infty<s<t\le 0\}$ then 
\begin{equation}\label{eq1:Ht}
H_t(\Q| \P)= \E^{\Q}\bigg[h_{\mathcal F_{[0,t]}}(\Q_{0,\omega} \big | \P)\bigg]
\end{equation}
defines the entropy of the increments of the process $\Q$ with respect to $\P$ over the $\sigma$-field  ${\cF}_{[0,t]}$. Here for
 any two probability measures $\mu$ and $\nu$ on any $\sigma$-algebra of the form ${\cF}=\mathcal F_{[a,b]}$ on $\Omega$, we denote by 
\begin{equation}\label{eq2:Ht}
h_{\mathcal F}(\mu|\nu)=\sup_f \bigg\{\int f \d\mu- \log\bigg(\int \e^f \d\nu\bigg)\bigg\}
\end{equation}
the relative entropy of the probability measure $\mu$ with respect to $\nu$ on $\mathcal F$ and the supremum above is taken over all 
continuous, bounded and $\mathcal F$-measurable functions. For our purposes,  
it is useful to collect some well-known properties of $H_t(\mathbb Q| \P)$. %which were established in (Part IV, \cite{DV75}):
\begin{lemma}\label{lemma-entropy}
Either $H_t(\mathbb Q| \P)\equiv\infty$ for all $t>0$, or, $H_t(\mathbb Q|\P)= t H(\mathbb Q|\P)$, where the map  
$H(\cdot| \P): \Mcal_{\mathrm{si}} \to [0,\infty]$, called the {\it ( specific) relative entropy of a process  $\Q$ with stationary increments} satisfies the following properties: 
\begin{itemize}
\item $H(\cdot|\P)$ is convex and lower-semicontinuous in the usual weak topology. 
\item$H(\cdot|\P)$ is coercive (i.e., for any $\ell\geq 0$, 
the sub-level sets $\{\mathbb Q\colon H(\mathbb Q| \P)\leq \ell\}$ are weakly compact as measures on $\Omega_0$.
\item The map $\mathbb Q\mapsto H(\mathbb Q|\P)$ is in fact linear. In particular, for any probability measure $\Gamma$ on $\Mcal_{\mathrm{si}}$, 
\begin{equation}\label{linear}
H\bigg(\int \mathbb Q \, \Gamma(\d\mathbb Q)\bigg|\P\bigg)= \int H(\mathbb Q|\P) \,\Gamma(\d\mathbb Q).
\end{equation}
\end{itemize}
\end{lemma}
\begin{proof}
We refer to \cite[Section 3]{DV4} for the proofs of the above assertions. Indeed, the fact that either $H_t(\mathbb Q| \P)\equiv\infty$ for all $t>0$, or, $H_t(\mathbb Q|\P)= t H(\mathbb Q|\P)$
can be found in \cite[Theorem 3.1]{DV4}. The convexity and lower semicontinuity of the $H(\cdot| \P)$ is proved in  \cite[Theorem 3.3]{DV4}, while coercivity of this map 
is a result of the variational representation of $H_t(\cdot|\mathbb P)$ proved in \cite[Theorem 3.2]{DV4}. Finally, the linearity of $H(\cdot|\P)$ is shown in \cite[Theorem 3.5]{DV4}.
\end{proof}
\medskip

Finally we remark that if $\Q\in \Mcal_{\mathrm s}(\Omega_0\otimes \R^d)$ is a stationary process, we can also consider the $\sigma-$ fields $\Sigma_{[a,b]}$ generated by $\{\omega(t): a\le t\le b\}$ and define
$$
{\widehat H}_t(\Q| \P)= \E^{\Q}\bigg[h_{ \Sigma_{[0,t]}}(\Q_{0,\omega} \big | \P)\bigg]
$$
where $\Q_{0,\omega}$ is now the conditional probability given $\Sigma_{(-\infty,0]}$. Since a stationary process is also one with stationary increments both $H_t(\Q| \P)$ (defined in \eqref{eq1:Ht}) and ${\widehat H}_t(\Q| \P)$ make sense for $\Q\in {\cM}_{s}$ and the same statements as in Lemma \ref{lemma-entropy} continue to hold in this case too, and in fact it is not difficult to see that these two objects coincide, i.e., 
\begin{lemma}
For any $\Q\in \Mcal_{\mathrm s}(\Omega_0\otimes \R^d)$, 
\begin{equation}\label{equality-entropy}
H(\Q|\P)  =\widehat H(\Q|\P)
\end{equation}
\end{lemma}
\begin{proof}
Indeed, since by \eqref{linear} in Lemma \ref{lemma-entropy} both $H$ and $\widehat H$ are linear we can assume that $\Q$ is stationary and ergodic (else $\Q$ can be written as a mixture 
of stationary ergodic measures). If either $H(\Q)$ or ${\widehat  H}(\Q)$ is finite, then \eqref{eq1:Ht}-\eqref{eq2:Ht} imply that $\E^{\Q}[|\omega(1)-\omega(0)|]<\infty$. By the ergodic theorem, we have 
$$
\omega(0)=\lim_{T\to\infty}\frac{1}{T}\int_0^T (\omega(0)-\omega(t)) dt+E^{\Q}[\omega(0)]
$$
and since there is no essential difference between the two $\sigma-$fields $\mathcal F$ and $\Sigma$, we have the desired equality \eqref{equality-entropy}. 
\end{proof}

\subsection{Relative entropy estimates.}
The following lemma allows us to apply the results from Section \ref{notation} and Section \ref{sec-stationarity} to the singular function $x\mapsto 1/|x|$ in $\R^3$. 
%The next lemma is needed to take care of the singularity of the Coulomb potential $\frac{1}{|x|}$.
\begin{lemma}\label{lemma-Coulomb1}
For any $\eta>0$, let 
\begin{equation}\label{Y_eps}
\begin{aligned}
&V(x)=\frac 1 {|x|} \qquad \,\,\, V_\eta(x)= \frac 1 {(\eta^2+|x|^2)^{1/2}} \quad \mbox{and}\\
&Y_\eta(x)= (V-V_\eta)(x)
\end{aligned}
\end{equation}
Then  
\begin{equation}\label{E4.3}
\lim_{\eta\to 0} \,\,\sup_{\mathbb Q\in \Mcal_{\mathrm{si}}\atop H(\mathbb Q|\mathbb {\mathbb P})\le C} \,\,\sup_{0< \eps\le 1} \,\,\eps\,\,\E^{\mathbb Q} \bigg[\int_0^\infty \d t \,\, \e^{-\eps t}\,\,Y_\eta\big(\omega(t)- \omega(0)\big)\bigg]=0.
\end{equation}
where in the supremum above $C<\infty$ is any finite constant. 
Moreover, given any $\delta>0$, there is a constant $C(\delta)$ such that for all $\eps\le 1$ and $\Q\in{\cM}_{\mathrm{si}}(\Omega_0)$ with $H(\Q|\P)<\infty$, 
\begin{equation}\label{E4.2}
\E^{\mathbb {\mathbb Q}} \bigg[\eps\int_0^\infty  \,\,  \frac{\e^{-\eps t}}{|\omega(t)-\omega(0)|} \d t\bigg]\le \delta H({\mathbb Q}|{\mathbb P})+C(\delta).
\end{equation}

\end{lemma}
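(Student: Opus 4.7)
The main tool is the Donsker--Varadhan entropy inequality, an immediate consequence of the variational formula \eqref{eq2:Ht} together with Lemma~\ref{lemma-entropy}: for any $\Q\in\Mcal_{\mathrm{si}}(\Omega_0)$ and any $\mathcal F_{[0,T]}$-measurable bounded $F$,
\begin{equation*}
\E^{\Q}[F]\le T\,H(\Q|\P)+\log\E^{\P}[e^{F}].
\end{equation*}
This will be combined with explicit resolvent / Gaussian computations under $\P$ and carefully chosen Lagrange multipliers; the guiding idea is that for each fixed $\eta>0$ the functions $V_\eta$ and $Y_\eta$ are bounded (by $1/\eta$), so the entropy inequality yields finite estimates that can be optimized in $\eta$.

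First, the bound under $\P$. By stationarity of increments, the law of $\omega(t)-\omega(0)$ under $\P$ is Gaussian $N(0,tI)$, and the $3$D Brownian resolvent identity yields $\eps\int_0^\infty e^{-\eps t}p_t(x)\,dt=\frac{\eps}{2\pi|x|}e^{-\sqrt{2\eps}|x|}$. Combined with the elementary identity $\int_{\R^3}Y_\eta(x)/|x|\,dx=4\pi\eta$ (direct spherical integration), this gives
\begin{equation*}
\eps\,\E^{\P}\bigl[\textstyle\int_0^\infty e^{-\eps t}Y_\eta(\omega(t)-\omega(0))\,dt\bigr]\;\le\;\tfrac{\eps}{2\pi}\int\tfrac{Y_\eta(x)}{|x|}\,dx\;=\;2\eps\eta,
\end{equation*}
which is the desired $\P$-version of \eqref{E4.3}. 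To transfer it to an arbitrary $\Q\in\Mcal_{\mathrm{si}}$ with $H(\Q|\P)\le C$, I apply the entropy inequality to the $\mathcal F_{[0,T]}$-measurable functional $F_\lambda=\lambda\,\eps\int_0^T e^{-\eps t}Y_\eta(\omega(t)-\omega(0))\,dt$, bounded by $\lambda/\eta$, and estimate the exponential moment $\log\E^{\P}[e^{F_\lambda}]$ via the Taylor bound $e^{F_\lambda}\le 1+F_\lambda+\tfrac12 F_\lambda^2\,e^{\|F_\lambda\|_\infty}$ together with the variance estimate $\E^{\P}[F_\lambda^2]\le\|F_\lambda\|_\infty\E^{\P}[F_\lambda]\le 2\lambda$. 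A coupled choice of $T$ and $\lambda$ (in coordination with $\eta\to 0$) then drives all error terms to $0$ uniformly in $\eps\le 1$ and $H\le C$, and monotone convergence extends the result to the infinite-horizon integral.

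For \eqref{E4.2}, decompose $1/|x|=V_\eta(x)+Y_\eta(x)$. Since $V_\eta$ is bounded by $1/\eta$, the entropy inequality applied with the critical Lagrange multiplier $\mu=T/\delta$ gives
\begin{equation*}
\E^{\Q}\bigl[\eps\,\textstyle\int_0^T e^{-\eps t}V_\eta\,dt\bigr]\;\le\;\delta\,H(\Q|\P)+\tfrac{\delta}{T}\log\E^{\P}\bigl[e^{(T/\delta)\cdot\eps\int_0^T e^{-\eps t}V_\eta\,dt}\bigr]\;\le\;\delta\,H(\Q|\P)+\tfrac{1}{\eta},
\end{equation*}
the final bound using $\eps\int_0^T e^{-\eps t}V_\eta\,dt\le 1/\eta$ pointwise. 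Monotone convergence as $T\to\infty$ preserves the estimate. The $Y_\eta$-piece is handled by a refinement of the argument used for \eqref{E4.3}, in which the Lagrange multiplier is tuned to concede a contribution of the form $(\delta/2)H(\Q|\P)$ on the right-hand side. Choosing $\eta=\eta(\delta)$ small enough and combining yields \eqref{E4.2} with a constant $C(\delta)$ depending only on $\eta(\delta)$.

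The principal obstacle is the inherent tension between the extensive entropy penalty $T\,H(\Q|\P)$ in the Donsker--Varadhan inequality and the intensive, bounded-in-$T$ target quantity on the left. The choice $\mu=T/\delta$ in the $V_\eta$-step converts the $T$-growth of the entropy penalty into the desired $\delta H$-controlled contribution, and succeeds precisely because of the $L^\infty$-bound $V_\eta\le 1/\eta$. The analogous coupled triple limit $\eta\to 0$, $T\to\infty$, $\lambda\to 0$ needed to control the singular $Y_\eta$ tail in \eqref{E4.3} is the technical heart of the argument.
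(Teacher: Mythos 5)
There is a genuine gap, and it sits at the heart of your argument: the claim that ``for each fixed $\eta>0$ the functions $V_\eta$ and $Y_\eta$ are bounded (by $1/\eta$)'' is false for $Y_\eta$. Indeed $V_\eta\le 1/\eta$, but $Y_\eta(x)=|x|^{-1}-(\eta^2+|x|^2)^{-1/2}$ still carries the full Coulomb singularity: $Y_\eta(x)\ge |x|^{-1}-\eta^{-1}\to\infty$ as $x\to 0$. Consequently $\|F_\lambda\|_\infty=\infty$, and both the Taylor bound $e^{F_\lambda}\le 1+F_\lambda+\tfrac12F_\lambda^2e^{\|F_\lambda\|_\infty}$ and the variance estimate $\E^{\P}[F_\lambda^2]\le\|F_\lambda\|_\infty\E^{\P}[F_\lambda]$ are vacuous. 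The entire difficulty of the lemma is precisely that $Y_\eta$ is the \emph{unbounded} piece of the decomposition $V=V_\eta+Y_\eta$ (it is small only in an integrated sense), so an $L^\infty$-based control of the exponential moment cannot work. Your treatment of the bounded piece $V_\eta$ in \eqref{E4.2} (entropy inequality with multiplier $T/\delta$, pointwise bound $1/\eta$, $T\to\infty$) is correct, but the $Y_\eta$ contribution to \eqref{E4.2} is deferred to the broken argument for \eqref{E4.3}, so \eqref{E4.2} is not established either.

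The missing device is Khasminskii's lemma: for $Y_\eta\ge 0$ one has $\E^{\P}[\exp(\rho\int_0^\infty Y_\eta(\omega(s)-\omega(0))\,\d s)]\le (1-\rho\,h(\eta))^{-1}$ provided $\rho\,h(\eta)<1$, where $h(\eta)=\sup_{x}\E^{\P}[\int_0^\infty Y_\eta(x+\omega(s)-\omega(0))\,\d s]$. This converts a \emph{first-moment} Green's-function bound into the \emph{exponential-moment} bound that the entropy inequality requires, with no $L^\infty$ hypothesis. Your resolvent computation is in fact exactly the needed input: taking $\eps\to 0$ in it gives $h(\eta)=\frac{1}{2\pi}\int Y_\eta(x)|x|^{-1}\d x=2\eta\to 0$ (the paper instead uses $Y_\eta\le c\sqrt{\eta}\,|x|^{-3/2}$ to get $h(\eta)\le c'\sqrt{\eta}$; either suffices). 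With this, the entropy inequality on $\cF_{[0,t]}$ gives $F(t):=\E^{\Q}[\int_0^tY_\eta]\le\frac{t}{\rho}H(\Q|\P)-\frac1\rho\log(1-\rho h(\eta))$, and integrating by parts, $\eps\E^{\Q}[\int_0^\infty e^{-\eps t}Y_\eta\,\d t]=\eps^2\int_0^\infty e^{-\eps t}F(t)\,\d t\le\frac{H(\Q|\P)}{\rho}-\frac{\eps}{\rho}\log(1-\rho h(\eta))$, uniformly in $\eps\le 1$; letting $\eta\to 0$ and then $\rho\to\infty$ yields \eqref{E4.3}, and choosing $\rho=1/\delta$ with $\eta=\eta(\delta)$ so that $h(\eta)<\delta/2$, combined with your $V_\eta$ estimate, yields \eqref{E4.2}. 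So the skeleton of your proof (entropy inequality plus explicit Gaussian/resolvent computations) is the right one and matches the paper's, but the step from first moments to exponential moments must go through Khasminskii rather than an $L^\infty$ Taylor expansion.
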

\begin{proof}
With any $\rho>0$, we estimate: %and some $h(\eta)$ that tends to $0$ as $\eta\to 0$,
\begin{equation}\label{est:entropy}
\begin{aligned}
F(t)&\stackrel{\mathrm{(def)}}{=}\E^{\Q}\bigg[\int_0^t  Y_\eta(\omega(t)-\omega(0)) \d t \bigg]\\
&\le\frac{t}{\rho}H({\Q}|{\P})+\frac{1}{\rho}\log \E^{\P}\bigg[\exp \bigg(\rho\int_0^tY_\eta(\omega(s)-\omega(0))\d s\bigg)\bigg]\\ 
&\le\frac{t}{\rho}H({\Q}|{\P})+\frac{1}{\rho}\log \E^{\P}\bigg[\exp \bigg(\rho\int_0^\infty Y_\eta(\omega(s)-\omega(0))\d s\bigg)\bigg]\\ 
&\le \frac{t}{\rho}H({\Q}|{\P})+\frac{1}{\rho}\log \bigg[\frac 1 {1-\rho \sup_{x\in \R^3} \E^{{\P}}\big[\int_0^\infty Y_\eta(x+\omega(s)-\omega(0))\d s\big]}\bigg]\\
&=\frac{t}{\rho}H({\Q}|{\P})-\frac{1}{\rho}\log [1-\rho h(\eta)]
\end{aligned}
\end{equation}
In the first upper bound in the above display we have used the relative entropy inequality,
$$
\E^\Q\bigg[\int_0^t V(\omega(s)-\omega(0))\d s\bigg ]\le \frac{t \,H(\Q|\P)}{\rho}+\frac{1}{\rho}\log \E^\P\bigg[\exp\bigg(\rho\int_0^tV(\omega(s)-\omega(0))\d s\bigg)\bigg]
$$
 and in the third upper bound we have used Khasminski's lemma  stating that for $V\ge 0$,
 $$
\E^{\P}\bigg[\exp \bigg(\rho\int_0^\infty V(\omega(s)-\omega(0))\d s\bigg)\bigg]\le \frac 1 {1-\rho\sup_{x\in \R^3} \E^{\P}\big[\int_0^\infty V(x+\omega(s)-\omega(0))\d s\big]}.
 $$
  It is not hard to check that the supremum in
 $$ 
 \sup_{x\in \R^3} \E^{\P}\big[\int_0^\infty \frac{1}{|x+\omega(s)-\omega(0)|^\frac{3}{2}}ds\big]
 $$
 is attained at $x=0$. Furthermore with \eqref{Y_eps} we can estimate $0\le Y_\eta(x)\le \frac {c{\sqrt \eta}}{ |x|^\frac{3}{2}}$, and therefore 
 $$
 \begin{aligned}
h(\eta) =E^{\P}[\int_0^\infty Y_\eta(\omega(s)-\omega(0))ds]&\le c{\sqrt\eta}\int_{R^3}\int_0^\infty \frac{1}{|x|^\frac{3}{2}}
\frac{1}{(2\pi t)^\frac{3}{2}}e^{-\frac{|x|^2}{2t}} \d t \d x\\
&=c^\prime {\sqrt\eta}\to 0
\end{aligned}
 $$
as   $\eta\to 0$.  For 
$\eps\le 1$ and $H({\mathbb Q}|{\mathbb P})\le C$, the expectation  on the left hand side in \eqref{E4.3} can be estimated, with the help of \eqref{est:entropy} and repeated 
integration by parts, which yields 
\begin{align*}
\E^{\mathbb Q}\bigg[\eps\int_0^\infty \d t \,\, \e^{-\eps t}\,\,Y_\eta\big(\omega(t)- \omega(0)\big)\bigg]
&=\eps\int_0^\infty \,\, \e^{-\eps t}\,\,F'(t)\, \d t\\
&=\eps^2\int_0^\infty e^{-\eps t} F(t) \d t\\
&\le \frac{1}{\rho} H({\mathbb Q}|{\mathbb P})-\frac{\eps}{\rho}\log  [1-\rho h(\eta)]\\
&\le \frac{C}{\rho}-\frac{1}{\rho}\log  [1-\rho h(\eta)].
\end{align*}
We can now let $\eta\to 0$ followed by $\rho\to\infty$ to deduce \eqref{E4.3}. 
To prove \eqref{E4.2}, given any $\delta>0$ in the previous step we can take $\eta$ to be small enough so that $h(\eta)<\frac{\delta}{2}$. Then with $\rho=\delta^{-1}$ and $V_\eta(x)\le \frac{1}{\eta}$, we have
\begin{align*}
\E^{\mathbb Q} \bigg[\eps\int_0^\infty  \,\,  \frac{\e^{-\eps t}}{|\omega(t)-\omega(0)|} \d t\bigg]&
\le \delta H({\Q}|{\P})+ \frac{1}{\eta}+\delta\log 2\\
&= \delta H({\Q}|{\P})+C(\delta)
\end{align*}
which proves \eqref{E4.2}.
\end{proof}
Combining the above result with Theorem \ref{lemma:cocycle} we now have
\begin{cor}\label{cor:cocycle}
Let $\Q$ be an ergodic process with stationary increments such that $H(\Q|\P)<\infty$. Then either
\begin{equation}\label{eq:cocycle}
\lim_{\eps\to 0}\E^{\Q}\bigg[\eps \int_0^\infty \frac{\e^{-\eps t}}{|\omega(t)-\omega(0)|} \d t\bigg]=0
\end{equation}
or there is a stationary process on $\Omega$ with 
$\Q$ being the distribution of its increments. \qed
\end{cor}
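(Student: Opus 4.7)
The plan is to derive this corollary from Theorem \ref{lemma:cocycle} by approximating the singular Coulomb potential $V(x)=1/|x|$ via the smooth cutoff $V_\eta(x)=(\eta^2+|x|^2)^{-1/2}$ from Lemma \ref{lemma-Coulomb1}. Although $V$ itself is continuous on $\R^3\setminus\{0\}$ and vanishes at infinity, its singularity at the origin prevents a direct application of Theorem \ref{lemma:cocycle}; however, each $V_\eta$ is bounded, continuous on all of $\R^3$, and vanishes at infinity, so it lies in the class of test functions for which the theorem applies.

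The argument proceeds as a straightforward dichotomy. First I invoke Theorem \ref{lemma:cocycle} for the ergodic stationary-increment process $\Q$. If the second alternative there holds, then $\Q$ is already the distribution of the increments of a stationary process on $\Omega$, which is precisely the second alternative asserted in the corollary, and we are done. Otherwise, the first alternative gives
\begin{equation*}
\lim_{\eps\to 0}\E^{\Q}\bigg[\eps\int_0^\infty \e^{-\eps t} V_\eta(\omega(t)-\omega(0))\,\d t\bigg]=0
\end{equation*}
for every fixed $\eta>0$.

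I then split the expectation of interest as
\begin{equation*}
\E^{\Q}\bigg[\eps\int_0^\infty \frac{\e^{-\eps t}}{|\omega(t)-\omega(0)|}\d t\bigg]
=\E^{\Q}\bigg[\eps\int_0^\infty \e^{-\eps t} V_\eta(\omega(t)-\omega(0))\d t\bigg]
+\E^{\Q}\bigg[\eps\int_0^\infty \e^{-\eps t} Y_\eta(\omega(t)-\omega(0))\d t\bigg].
\end{equation*}
By the previous step the first summand tends to $0$ as $\eps\to 0$ for each fixed $\eta>0$. By the bound \eqref{E4.3} in Lemma \ref{lemma-Coulomb1} applied with $C=H(\Q|\P)<\infty$, the second summand is bounded uniformly in $0<\eps\le 1$ by a quantity depending only on $\eta$ that tends to $0$ as $\eta\to 0$. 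Taking $\limsup_{\eps\to 0}$ first and then letting $\eta\to 0$ yields the vanishing limit in \eqref{eq:cocycle}.

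I do not anticipate any serious obstacle here; the substantive content is already packaged in Theorem \ref{lemma:cocycle} and Lemma \ref{lemma-Coulomb1}, and the corollary amounts to stitching them together. The only point requiring care is that the tail estimate \eqref{E4.3} is uniform over $0<\eps\le 1$, which is precisely what licenses the interchange of the two limit operations $\eps\to 0$ and $\eta\to 0$ and thereby transfers the dichotomy from regular continuous test functions to the genuinely singular Coulomb potential.
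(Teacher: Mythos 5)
Your proposal is correct and is exactly the argument the paper intends (the paper states the corollary with only the phrase ``combining Lemma \ref{lemma-Coulomb1} with Theorem \ref{lemma:cocycle}'' and no written proof): the dichotomy of Theorem \ref{lemma:cocycle} applied to the truncations $V_\eta$, followed by the splitting $1/|x|=V_\eta+Y_\eta$ and the estimate \eqref{E4.3}, which is uniform in $0<\eps\le 1$ over $\{H(\Q|\P)\le C\}$ and so justifies taking $\limsup_{\eps\to 0}$ before $\eta\to 0$. No gaps.
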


\begin{lemma}\label{lemma:entropy}
If $\Q \in \Mcal_{\mathrm s}(\Omega_0\times \R^3)$  is a stationary process with marginal $\mu\in \Mcal_1(\R^3)$ and $H({\Q}|\P)<\infty $, then 
there exists a non-negative function $\psi\in H^1(\R^3)$ with $\|\psi\|_2=1$ such that
$\d\mu=[\psi (x)]^2 \d x$. Moreover, if $\psi$ is strictly positive and $\log \psi$ is twice continuously differentiable with bounded derivatives, we also have

\begin{equation}\label{E4.4}
H({\Q}|\P)=\frac{1}{2}\int_{\R^3} |(\nabla\psi)(x)|^2 \d x + H(\Q| \widetilde\Q_\psi)
\end{equation}
where ${\widetilde\Q}_\psi\in \Mcal_1(\Omega_0\otimes \R^3)$, the stationary Markov process with generator $\frac{1}{2}\Delta+ \frac{\nabla \psi}{\psi} \cdot \nabla$  and marginal $\psi^2(x) \d x\in \Mcal_1(\R^3)$.
\end{lemma}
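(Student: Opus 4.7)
The plan is to extract an adapted drift from $\Q$ via Girsanov, read off the density of $\mu$ from the stationary Fokker--Planck equation that $\mu$ must satisfy, and then split this drift into its symmetric and antisymmetric parts to obtain the entropy decomposition.

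\textbf{Drift and density.} Since $H(\Q|\P)<\infty$ and $H_t(\Q|\P)=tH(\Q|\P)$ by Lemma \ref{lemma-entropy}, the restriction of $\Q$ to each $\mathcal F_{[0,t]}$ is absolutely continuous with respect to $\P$, and Girsanov's theorem supplies a predictable drift $(b_s)_{s\ge 0}$ such that $\omega(t)-\omega(0)-\int_0^t b_s\,ds$ is a $\Q$-Brownian motion and
\[tH(\Q|\P)\;=\;\tfrac12\,\E^\Q\!\!\int_0^t|b_s|^2\,ds.\]
Stationarity then yields $H(\Q|\P)=\tfrac12\E^\Q[|b_0|^2]$. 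Define $B(x):=\E^\Q[b_0\mid\omega(0)=x]$, so that $B\in L^2(\mu)$ with $\int|B|^2\,d\mu\le 2H(\Q|\P)$. Applying It\^o to $f\in C_c^\infty(\R^3)$ along the semimartingale $\omega$, taking $\Q$-expectations over $[0,t]$, using stationarity of $\mu$, and conditioning on $\omega(0)$ yields the distributional identity
\[\int_{\R^3}\bigl(\tfrac12\Delta f+B\cdot\nabla f\bigr)\,d\mu=0\qquad\forall f\in C_c^\infty(\R^3),\]
i.e.\ the stationary Fokker--Planck equation $\tfrac12\Delta\mu=\nabla\!\cdot(B\mu)$ in the distributional sense, with $B\mu\in L^1_{\mathrm{loc}}$. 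Elliptic regularity then forces $\mu$ to possess a density $\rho$ with $\psi:=\sqrt\rho\in H^1(\R^3)$, $\|\psi\|_2=1$, together with the Fisher-information bound $\tfrac12\int|\nabla\psi|^2\,dx\le H(\Q|\P)$.

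\textbf{Entropy decomposition.} Write $B=B_s+B_a$ with $B_s:=\nabla\psi/\psi$ and $B_a:=B-B_s$. Since $\tfrac12\Delta(\psi^2)=\nabla\!\cdot(B_s\,\psi^2)$ is automatic, the Fokker--Planck equation collapses to the divergence-free condition $\nabla\!\cdot(B_a\,\psi^2)=0$. Testing this identity against a suitable truncation/regularization of $\log\psi$ yields the orthogonality
\[\int B_s\cdot B_a\,d\mu=0,\qquad\text{hence}\qquad\int|B|^2\,d\mu=\int|\nabla\psi|^2\,dx+\int|B_a|^2\,d\mu.\]
Under the assumed regularity of $\psi$ ensuring that the stationary Markov diffusion $\P_\psi$ with generator $\tfrac12\Delta+(\nabla\psi/\psi)\cdot\nabla$ and invariant measure $\psi^2\,dx$ is well-defined, a second application of Girsanov comparing $\Q$ with $\P_\psi$ gives
\[H(\Q|\P_\psi)=\tfrac12\E^\Q|b_0-B_s(\omega(0))|^2=H(\Q|\P)-\int B\cdot B_s\,d\mu+\tfrac12\int|B_s|^2\,d\mu.\]
Using the orthogonality (so that $\int B\cdot B_s\,d\mu=\int|B_s|^2\,d\mu=\int|\nabla\psi|^2\,dx$), this telescopes to the desired identity \eqref{E4.4}.

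\textbf{Main obstacle.} The most delicate point is extracting a genuine $H^1$ density of $\mu$ from the distributional stationary Fokker--Planck equation driven by only an $L^2(\mu)$ drift, and rigorously justifying the use of $\log\psi$ as a test function in the divergence-free step on the set where $\psi$ may vanish. These low-regularity issues typically require either a careful truncation/mollification scheme or an appeal to the Dirichlet-form machinery for diffusions with singular drifts.
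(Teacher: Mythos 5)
Your argument is correct and arrives at \eqref{E4.4}, but it is organized along a genuinely different route than the paper's. The paper factorizes the conditional Radon--Nikodym derivative as $\frac{\d \Q^\omega}{\d \P}=\frac{\d \Q^\omega}{\d {\widetilde\Q}^\omega_\psi}\,\frac{\d {\widetilde\Q}^\omega_\psi}{\d \P}$, so that \eqref{E4.4} reduces to showing $\E^{\Q}\big[\log \frac{\d {\widetilde\Q}^\omega_\psi}{\d \P}\big]=\frac{1}{2}\int|\nabla\psi|^2\,\d x$; the key trick is that after applying It\^o's formula to the Girsanov exponent of the \emph{known} diffusion ${\widetilde\Q}_\psi$, the stochastic integral is traded for $\log\psi(\omega(1))-\log\psi(\omega(0))$ minus a $\Delta\log\psi$ time-integral, so the whole expression depends on the path only through its one-time marginals, and its $\Q$-expectation is computed purely from stationarity of $\Q$, $\d\mu=\psi^2\d x$, and one integration by parts. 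This never requires constructing the drift of the abstract process $\Q$. You instead invoke the entropy--drift representation of $\Q$ itself (a F\"ollmer-type theorem: finite entropy relative to Wiener measure yields an adapted drift $b$ with $H=\frac12\E\int|b|^2$ --- true, but a nontrivial input that deserves a citation), project the drift onto $\omega(0)$, and obtain the orthogonality $\int B_a\cdot\nabla\log\psi\,\d\mu=0$ from the stationary Fokker--Planck equation tested against $\log\psi$. At bottom the two computations coincide: your orthogonality identity is exactly the paper's integration by parts $-\frac12\int(\Delta\log\psi)\psi^2\,\d x=\int|\nabla\psi|^2\,\d x$ in disguise, and both need precisely the regularity and positivity of $\log\psi$ that the lemma hypothesizes (and which holds for the Pekar maximizer, the only case used later). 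Your route has the added merit of actually proving the first assertion ($\d\mu=\psi^2\d x$ with $\psi\in H^1$ and the Fisher-information bound), which the paper declares immediate; the elliptic-regularity step you appeal to for invariant measures with only $L^2(\mu)$ drift is standard (Donsker--Varadhan's $I$-function contraction, or Bogachev--Krylov--R\"ockner regularity) but should be referenced rather than asserted.
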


\begin{remark}
The notation $\widetilde\Q_\psi$ for the process with generator $\frac{1}{2}\Delta+ \frac{\nabla \psi}{\psi} \cdot \nabla$ will be justified later, see Theorem \ref{thmain} and Remark \ref{rmk:thmain} below. \qed
\end{remark}

\begin{proof}[{\bf Proof of Lemma \ref{lemma:entropy}}]
Note that the first statement of the lemma is immediate. Assume that $\psi$ is strictly positive and $\log\psi$ has the required regularity. Let ${\widetilde\Q}^\omega_\psi $ and ${\Q}^\omega$ be respectively the conditional distributions of $\widetilde\Q_\psi$ and $\Q$  on ${\mathcal F}_{[0,1]}$  given ${\cF}_{(-\infty,0)}$. If we write 
$$
\frac{\d \Q^\omega}{\d \P} = \bigg(\frac{\d \Q^\omega}{\d {\widetilde\Q}^\omega_\psi}\bigg)\, \bigg( \frac{\d {\widetilde\Q}^\omega_\psi}{\d \P} \bigg),
$$
 we need to check that 
\begin{equation}\label{E4.5}
\E^{\Q}\bigg[\log \bigg(\frac{\d {\widetilde\Q}^\omega_\psi}{\d \P}\bigg)\bigg]=\frac{1}{2}\int_{R^3} |(\nabla\psi)(x)|^2 \d x
\end{equation}
Applying  Girsanov's formula, followed by It\^o's formula, we have 
\begin{align}\label{E4.6}
\log  \bigg(\frac{\d {\widetilde\Q}_\psi}{\d \P}\bigg) &=\int_0^1\bigg(\frac{\nabla\psi}{\psi}\bigg)(\omega(t)) \cdot \d\omega(t)
 -\frac{1}{2}\int_0^1 \bigg|\frac{\nabla\psi}{\psi}\bigg|^2(\omega(t))\d t\notag\\
&=\log\psi(\omega(1))-\log \psi(\omega(0))\\
&\qquad-\frac{1}{2}\int_0^1( \Delta\log \psi)(\omega(t))\d t-\frac{1}{2}\int_0^1 \bigg|\frac{\nabla\psi}{\psi}\bigg|^2(\omega(t))\d t
\end{align}
Then \eqref{E4.5} follows by taking expectation with respect to the stationary process with marginal density $\psi^2(x) \d x$ and integrating by parts. 
To derive the second assertion in the lemma, note that the quantity in \eqref{E4.6} reduces to
\begin{align*}
-\frac{1}{2}&\int_{\R^3}( \Delta\log\psi)(x)\psi^2(x) \d x-\frac{1}{2}\int_{\R^3}\frac {|\nabla\psi|^2(x)}{\psi^2(x)}\psi^2(x) \d x\\
&=\frac{1}{2}\int_{\R^3} |\nabla\psi|^2(x)\d x. 
\end{align*}
\end{proof}
\begin{remark}
In the context of proving Theorem \ref{thm1} for the Fr\"ohlich polaron, we will use Lemma \ref{lemma:entropy} when $\psi$ is the maximizer $\psi_0$ of the Pekar variational problem \eqref{Pekarfor}  and in this case 
$\log \psi_0$ has the required regularity  according to \cite{L76}.
Indeed, let us check the assumption on the maximizer $\psi_0$. Recall that
$$
g_0= \sup_{\heap{\psi\in H^1(\R^3)}{\|\psi\|_2=1}}\bigg\{\int\int_{\R^3\times\R^3} \frac{\psi^2(x)\psi^2(y)} {|x-y|} \, \d x \, \d y\,\, - \, \frac 12 \int_{\R^3} \d x |\nabla\psi(x)|^2 \bigg\}
$$
A simple perturbation argument shows that the maximizing function $\psi_0\in H^1(\R^3)$ satisfies the Euler-Lagrange equation 
\begin{equation}\label{EL}
\bigg(\Delta + 4 \int_{\R^3}\frac {\psi_0^2(y)}{|x-y|}\, \d y\bigg)\psi_0(x)= \lambda \psi_0(x).
\end{equation}
We multiply \eqref{EL} on both sides by $\psi_0(x)$, integrate over $\R^3$ and recall that $\int_{\R^3}\psi_0^2=1$, to see that
$$
\begin{aligned}
\lambda&=4 \int\int_{\R^3\times\R^3}\frac{\psi_0^2(x)\psi_0^2(y)}{|x-y|}-  \|\nabla\psi_0\|_2^2 \\
&=2 \bigg( 2\int\int_{\R^3\times\R^3}\frac{\psi_0^2(x)\psi_0^2(y)}{|x-y|}- \frac 12 \|\nabla\psi_0\|_2^2\bigg)
\geq 2 g_0 >0.
\end{aligned}
$$
It has been shown by Lieb \cite[Theorem 8, (iii)]{L76} that if $\lambda>0$ in \eqref{EL}, then the maximizing function $\psi_0\in C^\infty$, goes to zero at infinity and hence $\psi_0$ is a strong solution of \eqref{EL}. \qed
\end{remark}

\subsection{Coercivity of the variational formulas and tightness of maximizers.}

\begin{lemma}\label{lemma:epsPekar}
Consider the variational problem
\begin{equation}\label{epsPekar}
g(\eps)=\sup_{{\Q}\in{\cM}_{\mathrm{si}(\Omega_0)}}\bigg[ \E^{\Q}\bigg(\eps \int_0^\infty e^{-\eps t} \frac{1}{|\omega(t)-\omega(0)|} \d t\bigg)-H({\Q}|{\P})\bigg]
\end{equation}
Then the supremum is attained over a nonempty set ${\mathfrak m}_\eps$ of processes with stationary increments.
\end{lemma}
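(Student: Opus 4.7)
My approach would be the direct method of the calculus of variations: establish coercivity and weak compactness along maximizing sequences, then verify upper semicontinuity of the functional, with the only real subtlety being the Coulomb singularity.

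First I would check that $g(\eps)\in(0,\infty)$. Taking $\Q=\P$ gives $H(\P|\P)=0$ and a strictly positive interaction integral (equal to a positive multiple of $\sqrt{\eps}$ by the three-dimensional Brownian Green function), so $g(\eps)>0$. Upper finiteness and coercivity both come from \eqref{E4.2} with $\delta=\tfrac12$: writing $F(\Q)$ for the objective functional,
$$F(\Q)\le -\tfrac12 H(\Q|\P)+C(\tfrac12).$$
Hence $g(\eps)\le C(\tfrac12)<\infty$, and any maximizing sequence $(\Q_n)\subset\Mcal_{\mathrm{si}}(\Omega_0)$ satisfies the uniform entropy bound $H(\Q_n|\P)\le 2C(\tfrac12)=:K$ for all $n$ large enough (since $F(\Q_n)\to g(\eps)\ge 0$).

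Next, by the coercivity of the specific entropy in Lemma \ref{lemma-entropy}, the sub-level set $\{\Q\in\Mcal_{\mathrm{si}}(\Omega_0):H(\Q|\P)\le K\}$ is weakly compact (note that $\theta_t$-invariance is a closed condition, so $\Mcal_{\mathrm{si}}(\Omega_0)$ is closed in the weak topology). I can therefore extract $\Q_{n_k}\weak\Q_\ast\in\Mcal_{\mathrm{si}}(\Omega_0)$, and the lower semicontinuity of $H(\cdot|\P)$ yields $H(\Q_\ast|\P)\le K$. It remains to show $F(\Q_\ast)\ge g(\eps)$. Lower semicontinuity handles the entropy contribution; the delicate point is the passage to the weak limit in the interaction term, whose kernel $V(x)=1/|x|$ is singular and unbounded. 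I would handle this by the regularization $V_\eta$, $Y_\eta=V-V_\eta$ from \eqref{Y_eps}. For each fixed $\eta>0$ the functional
$$\omega\;\longmapsto\;\eps\int_0^\infty e^{-\eps t}V_\eta\bigl(\omega(t)-\omega(0)\bigr)\,\d t$$
is bounded by $1/\eta$ and continuous on $\Omega_0$ (by dominated convergence on the time integral), so its expectation under $\Q_{n_k}$ converges to that under $\Q_\ast$. Meanwhile \eqref{E4.3} of Lemma \ref{lemma-Coulomb1}, applied with constant $K$, gives
$$\lim_{\eta\to 0}\,\sup_{k}\,\E^{\Q_{n_k}}\bigg[\eps\int_0^\infty e^{-\eps t}Y_\eta\bigl(\omega(t)-\omega(0)\bigr)\,\d t\bigg]=0,$$
and the same bound applies to $\Q_\ast$. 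A standard three-$\eps$ argument (first fix $\eta$ small to make the $Y_\eta$ remainders negligible, then let $k\to\infty$ for the $V_\eta$ piece) yields convergence of the full interaction term, hence $F(\Q_\ast)\ge\limsup_k F(\Q_{n_k})=g(\eps)$, so $\mathfrak m_\eps\ne\emptyset$.

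The only real obstacle is the Coulomb singularity, which blocks a direct weak-convergence argument. The pair \eqref{E4.2}--\eqref{E4.3} is exactly what is needed to overcome it: \eqref{E4.2} forces the uniform entropy bound along maximizing sequences, and \eqref{E4.3} then controls the cut-off error \emph{uniformly} over processes of bounded entropy, enabling the exchange of limits in the truncate-and-remove procedure.
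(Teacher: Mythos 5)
Your proof is correct and follows essentially the same route as the paper: use \eqref{E4.2} to get a uniform entropy bound along a maximizing sequence, invoke tightness/compactness of entropy sub-level sets to extract a weak limit, and conclude by semicontinuity. You are in fact more careful than the paper's own (terse) proof, which only cites lower semicontinuity of $H(\cdot|\P)$ and leaves implicit the convergence of the singular interaction term that you correctly handle via the $V_\eta$/$Y_\eta$ truncation and the uniform estimate \eqref{E4.3}.
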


The proof of the above result is based on the following well-known fact which crucially exploits that the independence of the increments of the underlying measure $\P$.  

\begin{lemma}\label{lemma:uniform:entropy}
Fix any $C<\infty$. Then the set $\{\mathbb Q\in \Mcal_{\mathrm{si}}(\Omega_0)\colon H(\mathbb Q| \P) \leq C\}$ is uniformly tight in the weak topology.\qed
\end{lemma}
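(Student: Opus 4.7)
My approach is to reduce the problem to a uniform modulus-of-continuity estimate on compact intervals and then combine the classical entropy inequality with sub-Gaussian bounds for Brownian motion.

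\emph{Reduction via Arzel\`a--Ascoli.} Since $\Omega_0$ carries the topology of uniform convergence on compact subsets of $\R$ and consists of continuous paths with $\omega(0)=0$, the Arzel\`a--Ascoli theorem implies that a subset $K\subset\Omega_0$ is precompact iff for every $T>0$ the restrictions of elements of $K$ to $[-T,T]$ are equicontinuous (pointwise boundedness then follows from $\omega(0)=0$). Writing $m_{T,\delta}(\omega)=\sup\bigl\{|\omega(t)-\omega(s)|:s,t\in[-T,T],\ |t-s|\le\delta\bigr\}$, tightness of the family $\{\Q:H(\Q|\P)\le C\}$ reduces to showing that for every $T,\eta>0$ there exists $\delta>0$ with
$$\sup_{H(\Q|\P)\le C}\Q\bigl(m_{T,\delta}>\eta\bigr)<\eta.$$
The key point is that $\{m_{T,\delta}>\eta\}$ is measurable with respect to the increment $\sigma$-field $\cF_{[-T,T]}$, so the entire argument takes place on increment $\sigma$-fields.

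\emph{Entropy inequality.} For any event $A\in\cF_{[0,t]}$, the Gibbs variational bound $\E^\Q[f]\le h_{\cF_{[0,t]}}(\Q|\P)+\log\E^\P[e^f]$ applied to $f=\lambda\mathbf{1}_A$, optimized at $\lambda=\log(1/\P(A))$, yields the classical estimate
$$\Q(A)\le\frac{h_{\cF_{[0,t]}}(\Q|\P)+\log 2}{\log(1/\P(A))}.$$
Because $\P$ has independent increments, the marginal entropy $h_{\cF_{[0,t]}}(\Q|\P)$ is dominated by the expected conditional entropy $H_t(\Q|\P)$ (by convexity of relative entropy applied to the disintegration along $\cF_{(-\infty,0)}$), and by Lemma \ref{lemma-entropy} this equals $tH(\Q|\P)\le tC$. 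To handle the two-sided event, decompose $\{m_{T,\delta}>\eta\}\subset\{m^+_{T,\delta}>\eta\}\cup\{m^-_{T,\delta}>\eta\}$ (one-sided moduli on $[0,T]$ and $[-T,0]$); the $\theta_T$-invariance of $\Q$ gives $\Q(m^-_{T,\delta}>\eta)=\Q(m^+_{T,\delta}>\eta)$, so it suffices to bound the one-sided event on $[0,T]$.

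\emph{Brownian modulus estimate and conclusion.} Partitioning $[0,T]$ into subintervals of length $\delta$ and combining the reflection principle with the Gaussian tail bound for three-dimensional Brownian increments yields constants $C_1,c_2>0$ (depending only on $d=3$) such that
$$\P\bigl(m^+_{T,\delta}>\eta\bigr)\le C_1\,\frac{T}{\delta}\,\exp\!\Bigl(-\frac{c_2\eta^2}{\delta}\Bigr).$$
Therefore $\log(1/\P(m^+_{T,\delta}>\eta))\ge c_2\eta^2/\delta-\log(C_1T/\delta)\to\infty$ as $\delta\to 0$. Inserting this into the entropy inequality,
$$\Q\bigl(m_{T,\delta}>\eta\bigr)\le\frac{2\bigl(TC+\log 2\bigr)}{c_2\eta^2/\delta-\log(C_1T/\delta)}\longrightarrow 0\quad\text{as }\delta\to 0,$$
uniformly over $\{\Q:H(\Q|\P)\le C\}$, which is the required estimate.

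The step needing the most care is the comparison $h_{\cF_{[0,t]}}(\Q|\P)\le tH(\Q|\P)$, bridging the intrinsic marginal entropy appearing in the Gibbs inequality with the specific-entropy functional of Lemma \ref{lemma-entropy}; this depends crucially on the independence of the $\P$-increments. Once this is in place, the remainder is a standard combination of Brownian tail bounds with the entropy inequality.
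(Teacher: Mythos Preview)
The paper does not supply a proof of this lemma: it is stated with a \qed\ as a ``well-known fact'' (and in fact the same coercivity statement already appears, again without proof, as one of the listed properties in Lemma~\ref{lemma-entropy}). So there is no paper proof to compare against; your task was effectively to fill in a standard result that the authors take for granted.

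Your argument is correct and is the standard route. The Arzel\`a--Ascoli reduction to a uniform modulus-of-continuity bound, the entropy inequality $\Q(A)\le (h+\log 2)/\log(1/\P(A))$, and the Gaussian modulus bound for Brownian motion are all fine. The step you flag as delicate, namely
\[
h_{\cF_{[0,T]}}(\Q|\P)\ \le\ \E^{\Q}\big[h_{\cF_{[0,T]}}(\Q_{0,\omega}\,|\,\P)\big]\ =\ H_T(\Q|\P)\ =\ T\,H(\Q|\P),
\]
is indeed the crux, and your justification is right: the restriction of $\Q$ to $\cF_{[0,T]}$ is the $\Q$-mixture of the conditional laws $\Q_{0,\omega}$, and because $\P$ has independent increments the reference measure on $\cF_{[0,T]}$ is the same in every fibre, so Jensen/convexity of $h(\cdot\,|\,\P)$ gives the inequality. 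The use of $\theta_T$-invariance to transfer the modulus estimate from $[-T,0]$ to $[0,T]$ is also clean. One cosmetic remark: the factor $2$ in your final display accounts for the union of the one-sided events, but the bound $TC+\log 2$ in the numerator already covers each one-sided event separately, so strictly speaking the $2$ multiplies the whole fraction rather than sitting inside the numerator; this does not affect the conclusion.
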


\begin{proof}[\bf{Proof of Lemma \ref{lemma:epsPekar}:}]
Let us consider any maximizing sequence $({\Q}_n)_n\subset \Mcal_{\mathrm{si}}(\Omega_0)$ such that
$$
\bigg[ E^{{\Q}_n}\big[\eps \int_0^\infty \e^{-\eps t} \frac{1}{|\omega(t)-\omega(0)|} dt\big]-H({\Q}_n|{\P})\bigg]\to g(\eps)
$$
By (\ref{E4.2}), for any  $\delta\in (0,1)$  there is a $C=C(\delta)$ such that for all $n$, 
$$
\E^{\mathbb Q_n}\bigg[\eps\int_0^\infty \e^{-\eps t}\frac{1}{|\omega(t)-\omega(0)|}\d t\bigg]\le \delta H({\Q_n}|{\P})+C(\delta)
$$
for all $\eps\le 1$. Combining the last two estimates imply that for some $C<\infty$, $\sup_n H({\Q}_n|{\P}) \leq C$, 
and then by Lemma \ref{lemma:uniform:entropy}, $\{\Q_n\}_n$ is also uniformly tight in $\Mcal_{\mathrm{si}}(\Omega_0)$. 
If
$$
{\Q}\stackrel{\mathrm{(def)}}= \lim_{n\to\infty} \Q_n
$$ 
is any subsequential limit point, the lower semicontinuity of $H(\cdot|\P)$ then 
implies that ${\Q}\in {\mathfrak m}_\eps$. 
\end{proof}

\begin{lemma}\label{lemma:meps:tight}
For any $\eps>0$ let $\mathfrak m_\eps$ be the maximizers of the variational formula \eqref{epsPekar}. Then for any $\eps_0>0$, 
$\cup_{0<\eps<\eps_0}{\mathfrak m}_\eps $ is uniformly tight in ${\mathcal M}_{\mathrm{si}}(\Omega_0)$.
\end{lemma}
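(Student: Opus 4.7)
The plan is to reduce tightness to a uniform relative entropy bound: by Lemma \ref{lemma:uniform:entropy} it will suffice to produce a finite constant $K = K(\eps_0)$ so that $H(\Q|\P) \le K$ for every $\Q \in \bigcup_{0<\eps<\eps_0}\mathfrak m_\eps$.

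To obtain such a bound, I first observe the trivial lower bound $g(\eps) \ge 0$ for every $\eps > 0$: the trial measure $\Q = \P$ lies in $\Mcal_{\mathrm{si}}(\Omega_0)$ (Brownian increments are stationary), has $H(\P|\P) = 0$, and produces a nonnegative interaction in \eqref{epsPekar}. For any $\Q \in \mathfrak m_\eps$ I then combine the defining identity
\begin{equation*}
g(\eps) = \E^\Q\bigg[\eps\int_0^\infty \frac{\e^{-\eps t}}{|\omega(t)-\omega(0)|}\,\d t\bigg] - H(\Q|\P)
\end{equation*}
with the Coulombic entropy estimate \eqref{E4.2} of Lemma \ref{lemma-Coulomb1}, applied with $\delta = 1/2$, to deduce
\begin{equation*}
0 \;\le\; g(\eps) \;\le\; -\tfrac{1}{2} H(\Q|\P) + C(1/2),
\end{equation*}
which forces $H(\Q|\P) \le 2C(1/2)$, uniformly over $\eps$ and over $\Q \in \mathfrak m_\eps$. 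The claim then follows from Lemma \ref{lemma:uniform:entropy}.

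The only mildly delicate point is that \eqref{E4.2} is formally stated for $\eps \le 1$, whereas I need it for $\eps \in (0,\eps_0)$. However, inspection of the proof of Lemma \ref{lemma-Coulomb1} shows that the $\eps$-dependence enters only through the identity $\eps\int_0^\infty \e^{-\eps t}\,\d t = 1$ and through a factor of $\eps/\rho$ in the Khasminski-type control of the regularized piece $Y_\eta$; both are absorbed into a constant depending only on $\delta$ and $\eps_0$, giving the uniform bound with $C(1/2,\eps_0)$ in place of $C(1/2)$. I do not expect any serious obstacle beyond this bookkeeping: this lemma is essentially a uniform-in-$\eps$ reprise of the coercivity argument already carried out in proving Lemma \ref{lemma:epsPekar}.
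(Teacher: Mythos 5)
Your argument is correct and is essentially identical to the paper's proof: both use $g(\eps)\ge 0$ together with the entropy estimate \eqref{E4.2} to get a uniform bound $H(\Q|\P)\le C(\delta)/(1-\delta)$ over all maximizers, and then invoke Lemma \ref{lemma:uniform:entropy}. Your added remark about extending \eqref{E4.2} beyond $\eps\le 1$ is reasonable bookkeeping (and is moot in the paper's application, where only small $\eps$ is relevant).
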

\begin{proof}
Since  $g(\eps)\ge 0$ for all $\eps>0$, if ${\Q}\in \cup_{\eps>0}{\bf m}_\eps$, then again (\ref{E4.2}) for any  $\delta\in (0,1)$ implies 
$$
0\le E^{\Q} \bigg[\eps\int_0^\infty  \,\,  \frac{\e^{-\eps t}}{|\omega(t)-\omega(0)|} \d t\bigg]-H({\Q}|{\P})\le  ( \delta -1)H({\Q}|{\mathbb P})+ C(\delta)
$$ 
providing a uniform upper bound 
$$
H({\Q}|{\P})\le\frac{C(\delta)}{1-\delta}
$$
on $H({\Q}|{\ P})$, which together with Lemma \ref{lemma:uniform:entropy} proves the lemma. 
\end{proof}

\subsection{A key result: Identification of any limiting maximizer.}\label{sec-mainresult}

We will now prove a key result which identifies any limiting maximizer 
of the variational problem $\mathfrak m_\eps$ as the increments of the Pekar process.
Combined with Lemma \ref{lemma:meps:tight} it constitutes 
the main argument for the convergence of the Polaron measure $\widehat\P_\eps$ to the increments of the Pekar process. 
\begin{theorem}\label{thmain}
Let $\eps_n\to 0$ and ${\Q}_n\subset {\bf m}_{\eps_n}$ so that $\Q_n\weak\Q$ on $\Mcal_1(\Omega_0)$. Then $\Q$ is the distribution of the increments of a stationary process ${\widetilde\Q_\psi}\in \Mcal_1(\Omega_0\otimes \R^3)$ which is the diffusion corresponding to the generator 
$$
\frac{1}{2}\Delta+\frac{\nabla\psi}{\psi}\cdot\nabla,
$$
 with invariant density $\psi^2(x)dx$,  where $\psi$ is any maximizer of 
$$
\sup_{\|\psi\|_2=1}\bigg[ \int_{R^3}\int_{\R^3}\frac{\psi^2(x)\psi^2(y)}{|x-y|} \d x \d y-\frac{1}{2}\int_{\R^3} |(\nabla\psi)(x)|^2 \d x\bigg]
$$
\end{theorem}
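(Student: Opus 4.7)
By Lemma \ref{lemma:meps:tight}, the family $\{\Q_n\}_n\subset \mathfrak m_{\eps_n}$ is uniformly tight in $\Mcal_{\mathrm{si}}(\Omega_0)$ with $\sup_n H(\Q_n|\P)\le C<\infty$. Passing to a subsequence, $\Q_n\weak\Q$ with $H(\Q|\P)\le C$ by lower semicontinuity; by linearity of $H(\cdot|\P)$ on mixtures (Lemma \ref{lemma-entropy}) and ergodic decomposition I may assume $\Q$ is ergodic. For the compactification I view each $\Q_n$ as $\Q_n\otimes\delta_0\in \Mcal_1(\Omega_0\otimes \R^3)$ and, for each $\omega$, form the $\eps_n$-resolvent occupation measure $\nu_{\eps_n,\omega}=\eps_n\int_0^\infty e^{-\eps_n t}\delta_{\theta_t(\omega,0)}\,\d t$. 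Its orbit $\widetilde\nu_{\eps_n,\omega}$ together with $\Q_n$ constitutes a point $\Theta_{n,\omega}=[\{\widetilde\nu_{\eps_n,\omega}\},\Q_n]\in\mathscr X$; the distribution $\Pi_n\in\Mcal_1(\mathscr X)$ of $\Theta_{n,\omega}$ under $\Q_n$ is tight by Corollary \ref{cor:main2}. A subsequence satisfies $\Pi_n\weak\Pi$, whose generic element $[\xi,\Q]$ carries $\xi=\{\widetilde\lambda_j\}$, a (possibly empty, finite, or countable) collection of orbits with $\sum_j\widetilde\lambda_j^{\ssup 1}\le\Q$. Corollary \ref{cor:cocycle} (flow version, via Lemma \ref{flow}) identifies each normalized orbit $\widetilde\lambda_j/p_j$, with $p_j:=\widetilde\lambda_j(\Omega_0\otimes\R^3)$, as a $\theta_t$-invariant probability on $\Omega_0\otimes\R^3$; by Lemma \ref{lemma:entropy} its $\R^3$-marginal has density $\widetilde\psi_j^2$ with $\widetilde\psi_j\in H^1(\R^3)$, $\|\widetilde\psi_j\|_2=1$, and $H(\widetilde\lambda_j^{\ssup 1}/p_j\,|\,\P)\ge\tfrac12\|\nabla\widetilde\psi_j\|_2^2$.

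The crux is to pass to the limit in the maximizer identity $g(\eps_n)=\E^{\Q_n}[\mathrm{energy}]-H(\Q_n|\P)$. Stationarity of increments under $\Q_n$ allows the symmetric rewriting
$$
\E^{\Q_n}\!\Bigl[\eps_n\!\int_0^\infty\! \frac{e^{-\eps_n t}\,\d t}{|\omega(t)|}\Bigr]=\E^{\Q_n}\!\Bigl[\iint \frac{\nu_{\eps_n,\omega}^{\ssup 2}(\d y_1)\,\nu_{\eps_n,\omega}^{\ssup 2}(\d y_2)}{|y_1-y_2|}\Bigr].
$$
For the truncation $V_\eta(y)=(\eta^2+|y|^2)^{-1/2}$, Remark \ref{rmk:main2} identifies $\Theta\mapsto\sum_{\widetilde\lambda\in\xi}\iint V_\eta(y_1-y_2)\widetilde\lambda^{\ssup 2}(\d y_1)\widetilde\lambda^{\ssup 2}(\d y_2)$ as a bounded continuous function on $\mathscr X$, so the $V_\eta$-energy passes to the limit under $\Pi_n\weak\Pi$ for each fixed $\eta$; Lemma \ref{lemma-Coulomb1} lets me then send $\eta\to 0$ uniformly in $n$ (using $H(\Q_n|\P)\le C$). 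Combining the lower semicontinuity and linearity of $H$ (so that $\liminf_n H(\Q_n|\P)\ge \sum_j p_j H(\widetilde\lambda_j^{\ssup 1}/p_j|\P)$ plus a non-negative dust term), with $\lim_n g(\eps_n)=g_0$ (upper bound from the Pekar conjecture proved in \cite{DV83}; matching lower bound from testing against $\widetilde\Q_{\psi_0}^{\ssup 1}$, where the Abelian theorem and mixing of $\widetilde\Q_{\psi_0}$ furnish the Pekar energy at $\psi_0$), the limit yields
$$
g_0\le\sum_j \Bigl[p_j^2\iint\frac{\widetilde\psi_j^2(x)\widetilde\psi_j^2(y)}{|x-y|}\,\d x\,\d y-\frac{p_j}{2}\|\nabla\widetilde\psi_j\|_2^2\Bigr].
$$

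Finally I extract the identification. By the scaling invariance of the Pekar functional, $\sup_{\|\psi\|_2=1}[q\iint\psi^2\psi^2/|x-y| -\tfrac12\|\nabla\psi\|_2^2]=q^2 g_0$ for every $q\in(0,1]$, so each summand above is bounded by $p_j\cdot p_j^2 g_0=p_j^3 g_0$; combined with $\sum_j p_j^3\le\sum_j p_j\le 1$ (equality forcing a single orbit of mass $1$ and no dust), the right-hand side is at most $g_0$. All inequalities must then saturate: $\xi$ consists of one orbit $\widetilde\lambda_1$ with $p_1=1$, $\widetilde\psi_1=\psi$ is a Pekar maximizer (unique up to translation by Lieb \cite{L76}), and equality in Lemma \ref{lemma:entropy} forces $H(\widetilde\lambda_1\,|\,\P_\psi)=0$, so the stationary lift is the Pekar diffusion $\widetilde\Q_\psi$ and $\Q=\widetilde\Q_\psi^{\ssup 1}$ as claimed. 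The principal obstacle throughout is controlling the singular Coulomb interaction against the translational non-compactness of $\R^3$, and this is precisely what the $\mathscr X$-compactification of Section \ref{notation} together with the Coulomb entropy estimate of Lemma \ref{lemma-Coulomb1} are engineered to resolve.
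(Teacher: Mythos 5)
Your proposal follows the paper's own proof in all essentials: tightness and the uniform entropy bound for the maximizers, the symmetric rewriting of the resolvent energy as a quadratic functional of the occupation measure $\nu_{\eps_n,\omega}$, compactness of the induced laws $\Pi_n$ on $(\mathscr X,\mathscr D)$, the $V_\eta$-truncation combined with Lemma \ref{lemma-Coulomb1}, and the entropy decomposition over orbits leading to a chain of inequalities sandwiching $g_0$. The one place you genuinely deviate is the endgame: the paper bounds each summand by $m(\widetilde\lambda)\,[\,\cdot\,]$ using $m^2\le m$ and then by the supremum over stationary processes, extracting $m\in\{0,1\}$ from equality in $m^2\le m$; you instead invoke the scaling identity $\sup_{\|\psi\|_2=1}[qE(\psi)-\tfrac12D(\psi)]=q^2g_0$ to bound each summand by $p_j^3g_0$ and conclude from $\sum_j p_j^3\le\sum_j p_j\le 1$. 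Both arguments are correct and rest on the same strict loss under mass splitting; yours makes the suboptimality of splitting slightly more quantitative, while the paper's is more elementary. Two small points to fix: the element of $\mathscr X$ should be $[\{\widetilde\nu_{\eps_n,\omega}\},\nu_{\eps_n,\omega}^{\ssup 1}]$ rather than $[\{\widetilde\nu_{\eps_n,\omega}\},\Q_n]$, since the domination condition $\sum\widetilde\lambda^{\ssup 1}\le\beta$ fails for a single path's occupation measure against the law $\Q_n$ (tightness of the $\beta$-marginal is then exactly what Corollary \ref{cor:cocycle} supplies); and the reduction ``I may assume $\Q$ is ergodic'' is not justified for a limit of maximizers of varying problems, but it is also never used, since the orbit decomposition handles non-ergodicity.
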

\begin{remark}\label{rmk:thmain}
Recall that the maximizer $\psi$ for the above variational problem is unique only up to translations. This is reflected in the fact that ${\widetilde\Q_\psi} =\{\Q_{\psi_0}\star \delta_a\colon a\in \R^3\in \widetilde\Mcal_1(\Omega_0\otimes\R^3)$ is an orbit under translations and {\it{any representative}} $\Q_{\psi_0}\star\delta_a$ of that orbit has $\Q:=\lim_{n\to\infty}\Q_n\in\Mcal_1(\Omega_0)$ as the distribution of its increments. 
\end{remark}

\begin{proof}[Proof of Theorem \ref{thmain}:] The proof is carried out in several steps. % into several steps. 

{\bf Step 1.} It is known that (\cite{DV83}) 
\begin{equation}\label{E4.0}
\lim_{\eps\to 0} g(\eps)=g_0=\sup_{\|\psi\|_2=1}\bigg[ \int_{\R^3}\int_{\R^3}\frac{\psi^2(x)\psi^2(y)}{|x-y|} \d x \d y-\frac{1}{2}\int_{\R^3}
 |(\nabla\psi)(x)|^2 \d x\bigg]
\end{equation}
and therefore for  $({\Q}_n)\subset {\mathfrak m}_{\eps_n}$ we have
$$
\bigg[ \E^{{\Q}_n}\big[\eps_n \int_0^\infty e^{-\eps_n t} \frac{1}{|\omega(t)-\omega(0)|} \d t\big]-H({\Q}_n|{\P})\bigg]\to g_0.
$$

{\bf Step 2.} Note that, $x\mapsto 1/|x|$ is an even function in $\R^3$. Moreover, since each $\Q_n$ is a process with stationary increments (i.e. $\theta_t$ invariant measure on $\Omega_0$), 
for each $n$ and $\eps_n>0$, we have the identity, 
$$\int_0^\infty \d s \, \eps_n \e^{-\eps_n s} (\Q_n \theta^{-1}_s)(\cdot) =\int_0^\infty \d s \eps_n\e^{-\eps_n s} \Q_n(\cdot)= \Q_n(\cdot).$$ 
Using these two facts, we have 
\begin{equation}\label{eq1:mainth}
\begin{aligned}
\E^{\Q_n}\bigg[&\eps_n\int_0^\infty \frac{\e^{-\eps_n t} }{|\omega(t)-\omega(0)|}\d t\bigg]\\
&=\E^{\Q_n}\bigg[\frac{\eps_n}{2}\int_{-\infty}^\infty \frac{\e^{-\eps_n |t|} }{|\omega(t)-\omega(0)|}dt\bigg]\\
&=\E^{\Q_n}\bigg[\eps_n^2\int_0^\infty \int_0^\infty\frac{\e^{-\eps_n t-\eps_n s} }{|\omega(t)-\omega(s)|}\d t \d s\bigg]\\
%&=E^{\Q_n}\bigg[  \int_{\R^3}\int_{\R^3}\frac{1}{|y_1-y_2|}\nu_{\eps_n}(\omega,\d y_1)\nu_{\eps_n}(\omega, \d y_2) \bigg]\\
&=\E^{\Pi_n}\bigg[  \int_{\R^3}\int_{\R^3}\frac{1}{|x_1-x_2|}{\widetilde\lambda}^{\ssup 2}(\d x_1){\widetilde\lambda}^{\ssup 2}(\d x_2) \bigg]
\end{aligned}
\end{equation}
where in the last display we used the definition for the averages $\nu_{\eps_n}(\cdot,\cdot)$ from \eqref{eq:nu} and as in \eqref{eq:Pi} we wrote $\Pi_n\in \Mcal_1(\Mcal_1(\Omega_0\otimes\R^3))$ for the distribution of $\nu_{\eps_n}$ under $\Q_n$. Then $\lambda^{\ssup 2}_n\in \Mcal_1(\R^3)$ is distributed according to the marginal of $\Pi_n$ on $\Mcal_1(\R^3)$ and moreover, the double integral 
$$
\int_{\R^3}\int_{\R^3} \frac 1 {|x_1-x_2|} \lambda^{\ssup 2}(\d x_1)\lambda^{\ssup 2}(\d x_2)
$$ is a function of the orbit $\widetilde\lambda^{\ssup 2}\in \widetilde\Mcal_1(\R^3)$, justifying the last identity in \eqref{eq1:mainth}.
%we wrote $\nu_{\eps_n}(\omega,\cdot)=\eps_n\int_0^\infty \e^{-\eps_n t} \1\{\omega \} \d t$
%is a random measure on $\Omega_0\times R^3$ that depends on $\omega\in \Omega_0$ and $\Pi_n$is its distribution under ${\Q}_n$. 

Now according to Corollary \ref{cor:cocycle}, the projection  ${\widehat\Pi}_n$ of $\Pi_n$ on ${\cM_1}(\Omega_0)$
 is a uniformly tight family and therefore, by Theorem \ref{main2} and Corollary \ref{cor:main2}, $\Pi_n$ itself is uniformly tight on $\Mcal_1(\mathscr X)$.
Moreover, by Remark \ref{rmk:main2}, for any continuous function $V:\R^3 \to \R$ vanishing at infinity the function 
$$
\Psi(V,[\xi,\beta])=\sum_{{\widetilde \lambda}\in \xi}\int_{\R^3}\int_{\R^3} V(x_1-x_2){\widetilde\lambda}^{\ssup 2}(\d x_1){\widetilde\lambda}^{\ssup 2}(\d x_2) 
$$
is continuous on the metric space $(\mathscr X,\mathscr D)$. Furthermore,  the same argument as in the proof of Lemma \ref{lemma:epsPekar} provides a
uniform bound in $H(\Q_n|\P)$ allowing us to invoke the uniform estimate \eqref{E4.3} to control the unboundedness of the Coulomb potential. This estimate,
combined with the above continuity of $[\xi,\beta]\mapsto \Psi(V_\eta,[\xi,\beta])$ (with $V_\eta(x)=(|x|^2+\eta^2)^{-1/2}$) 
shows that if we take the limit 
$$
\Pi=\lim_n \Pi_n
$$
in the weak topology on $\Mcal_1(\mathscr X)$ along a subsequence, then \eqref{eq1:mainth} dictates 
\begin{equation}\label{E4.10}
\E^{\Pi}\bigg[   \sum_{{\widetilde \lambda}\in S}\int_{\R^3}\int_{\R^3} \frac{1}{|x_1-x_2|}{\widetilde\lambda}^{\ssup 2}(\d x_1){\widetilde\lambda}^{\ssup 2}(\d x_2)     \bigg]-H(\Q|\P)\ge g_0.
\end{equation}
In the above assertion we have additionally used the lower semi continuity of $H(\cdot|\P)$.

\medskip\noindent
{\bf Step 3.} Recall that typical elements of $\mathscr X$ are denoted by $[\xi,\beta]$, where 
$\beta \in \Mcal_1(\Omega_0)$ and $\xi=\{\widetilde\lambda_j\}_j$ and any $\lambda\in \xi$ is an orbit of  a measure with total mass at most $1$ on $\Omega_0\otimes \R^3$ and ${\widetilde\lambda}^{\ssup 1}$ and  ${\widetilde\lambda}^{\ssup 2}$ are their
projections on $\Omega_0$ and $\R^3$ respectively. Let $\Pi\in \Mcal_1(\mathscr X)$ denote the limit of $\Pi_n= \Q_n \nu_{\eps_n}^{-1}$ from Step 2 above. 
Then for any $[\xi, \beta]\in \mathscr X$ which is distributed according to $\Pi$, we have
$$
\sum_{{\widetilde\lambda}\in \xi}{\widetilde\lambda}^{\ssup 1}(\cdot)\le \beta(\cdot) \quad\Pi-\,\mbox{a.s.},%,\qquad \xi=\{\widetilde\lambda_j\}_j,
$$ 
and we denote the difference by $\beta_0(\cdot)=\beta(\cdot)- \sum_{{\widetilde\lambda}\in \xi}{\widetilde\lambda}^{\ssup 1}(\cdot)$ which is again 
$\theta_t$-invariant on $\Omega_0$. We can write              
$\beta$ as a convex combination of probability measures on $\Omega_0$:
$$
\begin{aligned}
&\beta(\cdot)=m({\beta_0})\,{\overline\beta}_0(\cdot)+\sum_{{\widetilde\lambda}\in \xi} m\big( {\widetilde\lambda^{\ssup 1}}\big){\overline \lambda}^{\ssup 1}(\cdot) \qquad\mbox{where}\\ &\overline\lambda^{\ssup 1}(\cdot)=\frac 1 {m( {\widetilde\lambda^{\ssup 1}})} \widetilde\lambda^{\ssup 1}(\cdot) \,\,\,\, \mbox{and}\quad 
\overline\beta_0(\cdot)= \frac 1 {m(\beta_0)} \beta_0(\cdot).
\end{aligned}$$ 
where $m( {\widetilde\lambda^{\ssup 1}})$ and $m(\beta_0)$  are the total masses of $ {\widetilde\lambda^{\ssup 1}}$ and $\beta_0$, respectively. 
This convex decomposition, combined with the linearity of the map $H(\cdot |\P)$, we have 
\begin{align}\label{E4.8}
H(\Q|\P)=\E^{\Pi}\bigg[ m(\beta_0) H({\overline\beta}_0|\P)+\sum_{{\widetilde\lambda}\in \xi}m({\widetilde\lambda})H({\overline\lambda}^{\ssup 1}|\P)\bigg]
\ge \E^{\Pi}\bigg[ \sum_{{\widetilde\lambda}\in \xi}m({\widetilde\lambda})H({\overline\lambda}^{\ssup 1}|\P)\bigg].
\end{align}
On the other hand since $m({\widetilde\lambda})\le 1$, we have 
\begin{align}\label{E4.9}
&\E^{\Pi}\bigg[   \sum_{{\widetilde \lambda}\in \xi}\int_{\R^3}\int_{\R^3} \frac{1}{|x_1-x_2|}{\widetilde\lambda}^{\ssup 2}(\d x_1){\widetilde\lambda}^{\ssup 2}(\d x_2)     \bigg] \\
%&=\E^{\Pi}\bigg[\sum_{{\widetilde \lambda}\in \xi} m({\widetilde\lambda})^2\int_{\R^3}\int_{\R^3} \frac{1}{|x_1-x_2|}{\overline\lambda}^{\ssup 2}(\d x_1){\overline\lambda}^{\ssup 2}(\d x_2)  \bigg]\notag\\
&\le \E^{\Pi}\bigg[\sum_{{\widetilde \lambda}\in \xi} m({\widetilde\lambda})\int_{\R^3}\int_{\R^3} \frac{1}{|x_1-x_2|}{\overline\lambda}^{\ssup 2}(\d x_1){\overline\lambda}^{\ssup 2}(\d x_2)  \bigg].
\end{align}

\smallskip\noindent
{\bf Step 4.} Putting (\ref{E4.10}), (\ref{E4.8}) and (\ref{E4.9}) together  and observing that 
$ \sum_{{\widetilde \lambda}\in \xi}m({\widetilde\lambda})\le 1$, we have
\begin{align}
g_0&\le \E^{\Pi}\bigg[   \sum_{{\widetilde \lambda}\in \xi}\int_{\R^3}\int_{\R^3} \frac{1}{|x_1-x_2|}{\widetilde\lambda}^{\ssup 2}(\d x_1){\widetilde\lambda}^{\ssup 2}(\d x_2)     \bigg]-H(\Q|\P)\label{display1}\\
&\le \E^{\Pi}\bigg[\sum_{{\widetilde \lambda}\in \xi} [m({\widetilde\lambda})]^2\int_{\R^3}\int_{\R^3} \frac{1}{|x_1-x_2|}{\overline\lambda}^{\ssup 2}(\d x_1){\overline\lambda}^{\ssup 2}(\d x_2) -\sum_{{\widetilde\lambda}\in \xi}m({\widetilde\lambda})H({\overline\lambda}^{\ssup 1}|\P)\bigg]\\
&\le \E^{\Pi}\bigg[\sum_{{\widetilde \lambda}\in \xi} m({\widetilde\lambda})\int_{\R^3}\int_{\R^3} \frac{1}{|x_1-x_2|}{\overline\lambda}^{\ssup 2}(\d x_1){\overline\lambda}^{\ssup 2}(\d x_2) -\sum_{{\widetilde\lambda}\in \xi}m({\widetilde\lambda})H({\overline\lambda}^{\ssup 1}|\P)\bigg]\label{4.13}\\
&\le \sup_{{\widetilde\lambda}\in \widetilde{\cM}_{\mathrm s}(\Omega_0\otimes \R^3)}\bigg[\int_{\R^3}\int_{\R^3} \frac{1}{|x_1-x_2|}{\widetilde\lambda}^{\ssup 2}(\d x_1){\widetilde\lambda}^{\ssup 2}(\d x_2) -H({\widetilde\lambda}^{\ssup 1}|\P)\bigg]\\
&\le \sup_{\heap{\psi\in H^1(\R^3)}{\|\psi\|_2=1}}\bigg[\int_{\R^3}\int_{\R^3}\frac{1}{|x-y|}\psi^2(x)\psi^2(y)\d y-\frac{1}{2}\int_{\R^3} |\nabla\psi(x)|^2 \d x\bigg]\label{display2}\\
&=g_0\notag
\end{align}
In the penultimate  step we have  used the fact  from (\ref{E4.4}) that if $\widetilde\Q$ is a stationary proceess
with marginal distribution $\psi^2(x) \d x$, then 
\begin{equation}\label{eq:equal-entropy}
H({\widetilde\Q}^{\ssup 1}|\P)=\frac{1}{2}\int_{R^3} |(\nabla \psi)(x)|^2 dx+H({\widetilde\Q}^{\ssup 1}|{\Q}_\psi^{\ssup 1})\geq \frac{1}{2}\int_{\R^3} |(\nabla \psi)(x)|^2 dx
\end{equation}
%$$H({\Q}^{(1)}|\P)\ge \frac{1}{2}\int_{R^3} |(\nabla \psi)(x)|^2 dx$$
and equality above holds only when $\widetilde\Q^{\ssup 1}=\Q_\psi$ which is the common distribution of increments of the stationary  diffusion with generator $\frac{1}{2}\Delta+\frac{\nabla \psi_y}{\psi_y}\cdot\nabla$.

\medskip\noindent
{\bf Step 5.}  Note that equality should hold at every step between \eqref{display1} and \eqref{display2}. Then it is easy to see that $\Pi$ is concentrated on a single orbit $[\{\widetilde\lambda\},\lambda]\in \mathscr X$ 
and $\lambda$ is the diffusion $\Q_\psi$ corresponding to a $\psi$ that maximizes (\ref{E4.0}). Indeed, recall Corollary \ref{cor:cocycle}. Since equality in (\ref{4.13}) forces $m({\widetilde\lambda})$ to be $0$ or $1$, but since  the sum over $\widetilde \lambda$ is at most $1$,
there is only one orbit $\widetilde \lambda$ in $\xi$, and by the equality in \eqref{display2} and previous remark regarding equality in \eqref{eq:equal-entropy}, 
the stationary process $\lambda\in \Mcal_{\mathrm s}(\Omega_0\otimes\R^d)$ must be the diffusion $\Q_\psi$ with generator 
$\frac{1}{2}\Delta+\frac{\nabla \psi_y}{\psi_y}\cdot\nabla$, with $\psi$ being a maximizer of (\ref{E4.0}). This argument concludes the proof of Theorem \ref{thmain}. 
\end{proof}

\section{Identification of the strong coupling Polaron and Kac-interactions: Proofs of Theorem \ref{thm1} and Theorem \ref{thm2}}\label{sec-result}

\subsection{Proof of Theorem \ref{thm1}.} 

Recall the definition of the Polaron measure $\widehat\P_{\eps,T}$ from \eqref{eq-Polaron-eps}. 
In \cite[Theorem 5.1]{MV18} it was shown that for all sufficiently small (or sufficiently large) $\eps>0$, the limit $\widehat\P_\eps=\lim_{T\to\infty}\widehat\P_{\eps,T}\in \Mcal_{\mathrm{si}}(\Omega_0)$ exists.
The following result provides an explicit identification of 
$\widehat\P_\eps$ in the strong coupling limit $\lim_{\eps\to 0} \widehat\P_\eps$ which, combined with \cite[Theorem 5.1]{MV18} will also complete the proof of Theorem \ref{thm1}.  % exists and provides the identification of this limit.
 \begin{theorem}\label{theorem}
$\lim_{\eps\to 0}\widehat\P_\eps=\Q_\psi$ exists and is equal to the distribution of increments of the stationary diffusion with generator 
$$
\frac{1}{2}\Delta+\frac{\nabla\psi_y}{\psi_y}\cdot\nabla
$$
\end{theorem}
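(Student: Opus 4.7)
My plan is to combine three ingredients: (a) the identification $\widehat\P_\eps\in \mathfrak m_\eps$, (b) the tightness result from Lemma \ref{lemma:meps:tight}, and (c) the identification of any subsequential limit from Theorem \ref{thmain}. Given how much machinery has been developed, the remaining work is essentially to glue these pieces together and argue uniqueness of the limit.

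First I would establish that $\widehat\P_\eps\in \mathfrak m_\eps$ for every sufficiently small $\eps>0$. Let $R_{2T,\omega}$ denote the empirical process of Brownian increments on $[-T,T]$, so that $R_{2T,\omega}\in \Mcal_{\mathrm{si}}(\Omega_0)$ by construction. A standard Donsker--Varadhan type argument applied to the i.i.d. increment structure of $\P$ yields a full large deviation principle for the distribution of $R_{2T,\omega}$ under $\P$ with rate $2T$ and rate function $\Q\mapsto H(\Q|\P)$; here the i.i.d.\ structure provides exponential tightness, so we obtain upper bounds on closed sets and not just compact ones. The interaction functional
\begin{equation*}
\Phi_\eps(\Q)=\E^{\Q}\bigg[\eps\int_0^\infty \frac{\e^{-\eps t}}{|\omega(t)-\omega(0)|}\d t\bigg]
\end{equation*}
can be represented (modulo vanishing boundary terms as $T\to\infty$) as a continuous bounded functional of $R_{2T,\omega}$ after cutting off the Coulomb singularity; the entropy bound \eqref{E4.2} controls the cutoff error uniformly in $\Q$. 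A Varadhan lemma argument then gives
\begin{equation*}
\lim_{T\to\infty}\frac{1}{2T}\log Z(\eps,T)=g(\eps)
\end{equation*}
and, more importantly, that any subsequential weak limit of $\widehat\P_{\eps,T}$ as $T\to\infty$ is supported on $\mathfrak m_\eps$. Since by \cite{MV18} the full limit $\widehat\P_\eps=\lim_{T\to\infty}\widehat\P_{\eps,T}$ exists, it follows that $\widehat\P_\eps\in \mathfrak m_\eps$.

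Next, by Lemma \ref{lemma:meps:tight}, the family $\{\widehat\P_\eps\}_{0<\eps<\eps_0}\subset \cup_{\eps<\eps_0}\mathfrak m_\eps$ is uniformly tight in $\Mcal_{\mathrm{si}}(\Omega_0)$. Let $\eps_n\to 0$ be any sequence and let $\Q$ be a weak limit point of $\widehat\P_{\eps_n}$. Since $\widehat\P_{\eps_n}\in \mathfrak m_{\eps_n}$, Theorem \ref{thmain} applies and identifies $\Q$ as the distribution on $\Omega_0$ of the increments of the stationary diffusion $\widetilde\Q_\psi$ with generator $\tfrac12\Delta+(\nabla\psi/\psi)\cdot\nabla$, for some maximizer $\psi$ of the Pekar variational problem.

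Finally I would argue that this limiting distribution does not depend on which maximizer $\psi$ one chooses, so that all subsequential limits coincide and the full limit $\lim_{\eps\to 0}\widehat\P_\eps$ exists. By the result of Lieb cited in the introduction, the set of maximizers of the Pekar problem consists only of translates $\{\psi_y(\cdot)=\psi_0(\cdot-y):y\in\R^3\}$ of a single radial maximizer $\psi_0$. The diffusion with generator $\tfrac12\Delta+(\nabla\psi_y/\psi_y)\cdot\nabla$ is obtained from the one with generator $\tfrac12\Delta+(\nabla\psi_0/\psi_0)\cdot\nabla$ by shifting the starting point by $y$, which produces the same distribution of increments on $\Omega_0$ (as already noted in the introduction and in Remark \ref{rmk:thmain}). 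Hence every subsequential limit equals the same $\Q_\psi\in \Mcal_{\mathrm{si}}(\Omega_0)$, proving the theorem.

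The main obstacle is really in step (a): setting up the LDP for the empirical process of Brownian increments at full strength (so that the supremum in $g(\eps)$ is attained and the set of maximizers captures the subsequential limits of $\widehat\P_{\eps,T}$) and controlling the unbounded Coulomb interaction in Varadhan's lemma. Both issues are handled by the uniform entropy estimate \eqref{E4.2} combined with the cutoff $Y_\eta$ from Lemma \ref{lemma-Coulomb1}; the other steps are then essentially automatic given Theorem \ref{thmain} and the uniqueness result of Lieb.
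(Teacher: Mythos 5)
Your proposal is correct and follows essentially the same route as the paper: a strong LDP for the empirical process of increments (with exponential tightness from the i.i.d.\ structure and the Coulomb singularity handled by Lemma \ref{lemma-Coulomb1} and the entropy bound \eqref{E4.2}) gives $\widehat\P_\eps\in\mathfrak m_\eps$ once the existence of $\lim_{T\to\infty}\widehat\P_{\eps,T}$ from \cite{MV18} is invoked, and then Lemma \ref{lemma:meps:tight} plus Theorem \ref{thmain} together with the translation-invariance of the increment distribution (Lieb's uniqueness of the maximizer up to shifts) identify the unique limit. This is exactly the paper's argument via Proposition \ref{prop}, Lemma \ref{lemma:LDP} and Lemma \ref{lemma:tight}.
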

As already remarked earlier, the distribution of increments of a stationary diffusion with the above generator is independent of $y\in \R^3$. 
Theorem \ref{thmain}, combined with Lemma \ref{lemma:meps:tight} will conclude 
the proof of the above result once we show that for $\eps>0$ (small enough, but fixed),  the infinite volume Polaron measure $\widehat\P_\eps \in \mathfrak m_\eps$.
\begin{prop}\label{prop}
Let $\widehat\P_\eps=\lim_{T\to\infty}\widehat\P_{\eps,T}$ and $\mathfrak m_\eps$ be the set of maximizers of the variational problem \eqref{epsPekar}. Then $\widehat\P_\eps\in \mathfrak m_\eps$.
\end{prop}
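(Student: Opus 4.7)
The strategy is to combine the Donsker--Varadhan level-3 large deviation principle (LDP) for the empirical process of Brownian increments with the Gibbs variational principle, and then pass to the limit $T\to\infty$ by exploiting the convergence $\widehat\P_{\eps,T}\Rightarrow\widehat\P_\eps$ from \cite{MV18}, the stationarity of increments of $\widehat\P_\eps$, and the coercivity estimates of Lemma \ref{lemma-Coulomb1}. Throughout I write $\Phi(\Q)=\E^\Q[\eps\int_0^\infty \e^{-\eps t}/|\omega(t)-\omega(0)|\,\d t]$, so that $g(\eps)=\sup_{\Q\in\Mcal_{\mathrm{si}}(\Omega_0)}\{\Phi(\Q)-H(\Q|\P)\}$.

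The first step is to identify the partition-function limit. The i.i.d.\ structure of Brownian increments supplies an exponentially tight full LDP for the empirical process $R_T(\omega)=\frac{1}{2T}\int_{-T}^T\delta_{\theta_t\omega}\,\d t$ on $\Mcal_{\mathrm{si}}(\Omega_0)$ with rate $H(\cdot|\P)$. A Fubini computation with the substitution $u=t-s$ gives $F_T(\omega)=2T\,\Phi(R_T(\omega))+o(T)$, and a Varadhan-lemma argument, carried out first for the bounded truncation $V_\eta$ from \eqref{Y_eps} and then extended to $|x|^{-1}$ by means of \eqref{E4.3}, yields $\lim_T\tfrac{1}{2T}\log Z(\eps,T)=g(\eps)$.

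The second step identifies $\widehat\P_\eps$ as a maximizer by means of the Gibbs variational identity
\begin{equation*}
\tfrac{1}{2T}\log Z(\eps,T)=\tfrac{1}{2T}\E^{\widehat\P_{\eps,T}}[F_T]-\tfrac{1}{2T}H_T(\widehat\P_{\eps,T}|\P).
\end{equation*}
Lower semicontinuity and linearity of the specific relative entropy (Lemma \ref{lemma-entropy}) together with $\widehat\P_{\eps,T}\Rightarrow\widehat\P_\eps$ provide $\liminf_T \tfrac{1}{2T}H_T(\widehat\P_{\eps,T}|\P)\ge H(\widehat\P_\eps|\P)$. For the interaction term, stationarity of increments of any $\Q\in\Mcal_{\mathrm{si}}$ together with Fubini give
\begin{equation*}
\tfrac{1}{2T}\E^\Q[F_T]=\eps\int_0^{2T}\Big(1-\tfrac{u}{2T}\Big)\e^{-\eps u}\,\E^\Q[|\omega(u)-\omega(0)|^{-1}]\,\d u\ \nearrow\ \Phi(\Q),
\end{equation*}
and a truncation-plus-weak-convergence argument applied to $\widehat\P_{\eps,T}$, combined with the uniform entropy bound $\sup_T \tfrac{1}{2T}H_T(\widehat\P_{\eps,T}|\P)<\infty$ (obtained by inserting $\Q=\P$ in the Gibbs identity and invoking \eqref{E4.2}), upgrades this to $\limsup_T \tfrac{1}{2T}\E^{\widehat\P_{\eps,T}}[F_T]\le \Phi(\widehat\P_\eps)$. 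Chaining these bounds,
\begin{equation*}
g(\eps)=\lim_T \tfrac{1}{2T}\log Z(\eps,T)\le \Phi(\widehat\P_\eps)-H(\widehat\P_\eps|\P)\le g(\eps),
\end{equation*}
forces equality and therefore $\widehat\P_\eps\in\mathfrak m_\eps$.

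The principal technical obstacle is the upper bound $\limsup_T \tfrac{1}{2T}\E^{\widehat\P_{\eps,T}}[F_T]\le \Phi(\widehat\P_\eps)$: weak convergence of $\widehat\P_{\eps,T}$ alone does not license passing the limit through the singular Coulomb potential integrated over the growing box $[-T,T]^2$, and $\widehat\P_{\eps,T}$ is not \emph{a priori} stationary under $\theta_t$. I would address this by splitting $|x|^{-1}=V_\eta(x)+Y_\eta(x)$ from \eqref{Y_eps}; for the bounded continuous piece $V_\eta$ one combines weak convergence at finite-time marginals with the asymptotic shift-invariance inherited from the $\theta_t$-invariance of the limit $\widehat\P_\eps$ to obtain convergence of the averaged double integrals; the singular remainder $Y_\eta$ is dominated uniformly in $T$ by \eqref{E4.3} using the uniform entropy bound above, and a diagonal argument in $\eta\downarrow 0$ concludes.
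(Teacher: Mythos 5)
Your overall strategy --- the Gibbs variational identity $\frac{1}{2T}\log Z(\eps,T)=\frac{1}{2T}\E^{\widehat\P_{\eps,T}}[F_T]-\frac{1}{2T}h_{\cF_{[-T,T]}}(\widehat\P_{\eps,T}|\P)$ followed by a term-by-term passage to the limit --- is genuinely different from the paper's, but it has a gap at the entropy step, and the same issue undermines the interaction step. The inequality $\liminf_T\frac{1}{2T}h_{\cF_{[-T,T]}}(\widehat\P_{\eps,T}|\P)\ge H(\widehat\P_\eps|\P)$ does \emph{not} follow from $\widehat\P_{\eps,T}\weak\widehat\P_\eps$ together with Lemma \ref{lemma-entropy}: that lemma's lower semicontinuity concerns $H(\cdot|\P)$ on $\Mcal_{\mathrm{si}}(\Omega_0)$, whereas $\widehat\P_{\eps,T}$ is not a process with stationary increments and $\frac{1}{2T}h_{\cF_{[-T,T]}}(\widehat\P_{\eps,T}|\P)$ is not of the form $H(\Q_T|\P)$. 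Indeed the asserted inequality is false for general weakly convergent sequences: let $\mu_T$ agree with a fixed $\Q\in\Mcal_{\mathrm{si}}(\Omega_0)$ having $H(\Q|\P)>0$ on $\cF_{[-\sqrt{T},\sqrt{T}]}$ and follow $\P$ conditionally outside; then $\mu_T\weak\Q$ on every fixed window while $\frac{1}{2T}h_{\cF_{[-T,T]}}(\mu_T|\P)\to 0$. What is really needed is that the entropy (and, symmetrically, the interaction) of $\widehat\P_{\eps,T}$ is spread uniformly over the bulk of $[-T,T]$, i.e.\ that $\widehat\P_{\eps,T}\theta_s^{-1}\weak\widehat\P_\eps$ uniformly for $s$ in the bulk. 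This is an additional structural property of the Gibbs measures, not a consequence of convergence at the anchor point, and the ``asymptotic shift-invariance inherited from the $\theta_t$-invariance of the limit'' that you invoke for the interaction term does not supply it. The same objection applies to your appeals to \eqref{E4.2} and \eqref{E4.3}, which are proved only for elements of $\Mcal_{\mathrm{si}}(\Omega_0)$ with finite specific entropy, not for the finite-volume tilted measures.

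The clean repair is exactly the paper's device: push $\widehat\P_{\eps,T}$ forward under the empirical process $R_T$ of the periodized path \eqref{Rdef}, which is an honest element of $\Mcal_{\mathrm{si}}(\Omega_0)$ and averages over the bulk by construction. The strong LDP of Lemma \ref{lemma:LDP} plus a Varadhan-lemma tilt (your truncation $V_\eta+Y_\eta$ and Lemma \ref{lemma-Coulomb1} enter here legitimately, since the estimates are then applied to stationary objects) give Lemma \ref{lemma:tight}: every limit point of $\widehat\P_{\eps,T}R_T^{-1}$ is supported on $\mathfrak m_\eps$. Since the functional $\Q\mapsto\Phi(\Q)-H(\Q|\P)$ (in your notation) is affine by Lemma \ref{lemma-entropy}, $\mathfrak m_\eps$ is convex and closed, and the existence of $\lim_T\widehat\P_{\eps,T}=\widehat\P_\eps$ identifies the barycenter of any such limit point with $\widehat\P_\eps$, which yields the proposition without ever having to compare entropies of non-stationary measures with $H(\cdot|\P)$.
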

The proof of the above result depends on a {\it{strong large deviation principle}} for the distribution of the empirical process 
\begin{equation}\label{Rdef}
R_T(\omega, \cdot)=\frac 1 T\int_0^T \delta_{\theta_s\omega_T(\cdot)} \,\, \d s \in \Mcal_{\mathrm{si}}(\Omega_0)
\end{equation}
of increments, where 
$$
\omega_T(s)= 
\begin{cases}
\omega(s) \qquad\qquad\qquad\mbox{ if } 0\leq s \leq T,
\\
n \omega(T)+ \omega(r)\qquad \,\mbox{if} \,\,s=nT+r \quad n\in\N, 0\leq r<T.
\end{cases}
$$
Let $\mathbb Q_T= \P \, R_T^{-1} \in \Mcal_1(\Mcal_{\mathrm{si}}(\Omega_0))$ be the distribution of the empirical process under three-dimensional 
Brownian increments $\P$. Then % via the above map $\mathcal X\to\Mcal_{\mathrm{si}}$.
 \begin{lemma}\label{lemma:LDP}
The family $(\mathbb Q_{T})_{T>0}$  satisfies a strong large deviation principle as $T\to\infty$
in the space of probability measures on $\Mcal_{\mathrm{si}}(\Omega_0)$ with rate function $H(\cdot|\P)$. In other words, 
\begin{equation}\label{eq-ldp-statement}
\begin{aligned}
\liminf_{T\to\infty}\frac 1 T \log \mathbb Q_T(G) \geq -\inf_{\mathbb Q\in G} H(\mathbb Q|\P) \qquad \forall\,\,\,G\subset \Mcal_{\mathrm{si}}\,\,\,\mbox{open}\\
\limsup_{T\to\infty}\frac 1 T \log \mathbb Q_T(F) \leq -\inf_{\mathbb Q\in F} H(\mathbb Q|\P) \qquad \forall\,\,\,F\subset \Mcal_{\mathrm{si}}\,\,\,\mbox{closed}
\end{aligned}
\end{equation}
\end{lemma}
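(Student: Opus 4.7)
The plan is to reduce the continuous-time LDP to the classical process-level (level-$3$) Sanov theorem for an i.i.d.\ sequence, exploiting the fact that Brownian increments over disjoint unit intervals are independent. First, restrict to integer $T = N$ and set $\xi_n(s) = \omega(n+s) - \omega(n)$ for $0 \le s \le 1$. Under $\P$ the sequence $(\xi_n)_{n \in \Z}$ is i.i.d.\ with common law $\mu$ equal to three-dimensional Wiener measure on the unit interval, and the periodized path $\omega_N$ is a deterministic function of $(\xi_0,\ldots,\xi_{N-1})$. Consequently $R_N$ is the image under a continuous concatenation map of the shift-invariant empirical measure $L_N = \frac{1}{N}\sum_{n=0}^{N-1}\delta_{\sigma^n\bar\xi}$ on the sequence space $C([0,1],\R^3)^{\Z}$, where $\sigma$ denotes the shift and $\bar\xi$ the $N$-periodic extension of $(\xi_0,\ldots,\xi_{N-1})$.

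The process-level Sanov theorem for i.i.d.\ sequences in a Polish space (a standard application of the projective-limit approach to the LDP) yields a full LDP for $L_N$ on shift-invariant probability measures with rate function equal to the specific relative entropy
$$h(\nu|\mu^{\otimes\Z}) = \E^\nu\big[h_{\cF_0}(\nu_0^\omega|\mu)\big],$$
where $\nu_0^\omega$ is the regular conditional distribution of the zeroth coordinate given the past and $\cF_0$ is the $\sigma$-algebra of the zeroth coordinate. The concatenation $L_N \mapsto R_N$ is continuous and surjective onto $\Mcal_{\mathrm{si}}(\Omega_0)$, so by the contraction principle the law of $R_N$ satisfies a full LDP with rate function obtained as the infimum of $h(\nu|\mu^{\otimes\Z})$ over lifts $\nu$. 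Since $\P$ has independent increments across unit blocks, the chain rule for relative entropy on $\cF_{[0,N]}$ collapses to an additive sum over blocks, which using $\theta_t$-invariance of the limiting $\mathbb Q$ and comparing with \eqref{eq1:Ht}--\eqref{eq2:Ht} identifies this infimum as $H(\mathbb Q|\P)$; linearity of $H(\cdot|\P)$ from Lemma \ref{lemma-entropy} ensures the infimum is unambiguous.

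Exponential tightness, which upgrades the upper bound from compact to closed sets (the \textit{strong} in strong LDP), is obtained from Gaussian tail bounds on $\sup_{0\le s \le 1}|\omega(s)|$: at the block level these give exponentially tight compacts in $C([0,1],\R^3)$, which lift by the standard Sanov argument to exponentially tight compacts of shift-invariant laws on the sequence space, and therefore via the concatenation map to exponentially tight compacts in $\Mcal_{\mathrm{si}}(\Omega_0)$. For non-integer $T = N + r$ with $r \in [0,1)$, the empirical measures $R_T$ and $R_N$ differ by a fraction of mass $O(1/T)$, making them exponentially equivalent in the sense of large deviations, so the LDP extends from integer to continuous time.

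The main obstacle is the identification in the second step of the pushed-forward i.i.d.\ rate function with the intrinsic specific entropy $H(\cdot|\P)$ from Section \ref{sec-entropy}. This identification works cleanly precisely because $\P$ factorizes across unit-length blocks of increments, turning the usual $\lim_{N\to\infty} \frac{1}{N}$ in the definition of specific entropy into an exact $N = 1$ formula that matches \eqref{eq1:Ht} directly. A secondary technicality, handled in the exponential-equivalence step, is verifying that the periodization and the fractional boundary piece contribute only $o(T)$ to the log-probabilities, which follows from a uniform bound on $\sup_{|s-t|\le 1}|\omega(s)-\omega(t)|$ under $\P$ on windows of length $T$ together with a standard Borel-Cantelli argument.
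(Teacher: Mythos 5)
Your route is genuinely different from the paper's. The paper does not re-derive the weak LDP: it simply cites the arguments of Donsker--Varadhan (Part IV of \cite{DV75}) for the lower bound on open sets and the upper bound on compact sets, and then supplies only the missing ingredient, namely exponential tightness of $(\mathbb Q_T)$, to upgrade the upper bound to closed sets. You instead rebuild the full LDP from the process-level Sanov theorem for the i.i.d.\ unit blocks $\xi_n$ together with a contraction along the concatenation map. That is a legitimate and more self-contained strategy, and your identification of the contracted rate function with $H(\cdot|\P)$ is essentially correct: every lift $\nu$ of a given $\mathbb Q\in\Mcal_{\mathrm{si}}(\Omega_0)$ has the same specific entropy, by affinity (Lemma \ref{lemma-entropy}) and $\theta_t$-invariance of $\P$, and the one-block formula matches \eqref{eq1:Ht} exactly because $\P$ factorizes over unit blocks. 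One imprecision: $R_N$ is not the image of $L_N$ under concatenation alone; you must also average over the fractional shifts $\theta_t$, $t\in[0,1]$, to pass from $\theta_1$-invariance to full $\theta_t$-invariance. This extra averaging is continuous and leaves the specific entropy unchanged, so it does not affect the conclusion, but it must be part of the map to which you apply the contraction principle.

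The one step that is wrong as written is the exponential tightness argument. Gaussian tail bounds on $\sup_{0\le s\le 1}|\omega(s)|$ do not produce compact sets in $C([0,1],\R^3)$: sup-norm balls are bounded but not relatively compact in an infinite-dimensional function space, and by Arzel\`a--Ascoli (and then Prohorov at the level of $\Mcal_{\mathrm{si}}(\Omega_0)$) one needs uniform equicontinuity, not merely uniform boundedness. The fix is exactly what the paper does: control a H\"older norm such as $\|\omega^\star\|=\sup_{0\le s<t\le 1}|\omega(s)-\omega(t)|/|s-t|^{1/4}$, which has Gaussian tails by Borell's (Fernique's) inequality; balls in this norm are compact in $C([0,1],\R^3)$, and the rest of your lifting argument then goes through. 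Your treatment of non-integer $T$ by exponential equivalence is fine in substance, though the discrepancy between $R_T$ and $R_{\lfloor T\rfloor}$ is better described as an $O(L/T)$ error on every test function depending on a window of length $L$ (the two periodizations differ everywhere, not just on a set of mass $O(1/T)$); the conclusion is unaffected.
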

\begin{proof}
The proof of the lower bound for all open sets $G\subset \Mcal_{\mathrm{si}}$ and the upper bound for all compact sets $K\subset \Mcal_{\mathrm{si}}$ in \eqref{eq-ldp-statement} 
follows directly from the arguments of \cite{DV4} modulo slight changes, and the details are omitted.
To strengthen the upper bound to all closed sets $C\subset \Mcal_{\mathrm{si}}$, 
it suffices to show exponential tightness for the distributions $\mathbb Q_T$ that requires that for any $\ell>0$, existence of a compact set $K_\ell\subset\Mcal_{\mathrm{si}}$
so that, 
\begin{equation}\label{eq-1-pf-thm1}
\limsup_{T\to\infty} \frac 1 T\log \mathbb Q_T[K_\ell^{\mathrm c}] \leq -\ell.
\end{equation}
To prove the above claim, for $i=1,2,3$, if we set
$$
\|\omega_i^\star\|= \sup_{0\leq s \leq t\leq 1} \frac{|\omega_i(s)-\omega_i(t)|}{|s-t|^{1/4}}, 
$$
then by Borell's inequality, for  some constants $C_1, C_2>0$, 
$$
\P\big[\|\omega_i^\star\| > \lambda\big] \leq C_1 \exp\bigg[-\frac {\lambda^2}{2C_2}\bigg],
$$
and consequently, $\E^{\P}[\e^{\|\omega_i^\star\|}]<\infty$. Then 
$$
\limsup_{T\to\infty} \frac 1 T \log \E^{\mathbb Q_T}\big[\e^{T \sum_{i=1}^3 \|\omega_i^\star\|}\big] <\infty,
$$ 
and the desired exponential tightness \eqref{eq-1-pf-thm1} follows readily, proving the requisite upper bound in \eqref{eq-ldp-statement} for all closed sets. 
\end{proof}

The following lemma will complete the proof of Proposition \ref{prop}. 
\begin{lemma}\label{lemma:tight}
Fix any $\eps>0$. Then the distributions 
$\widehat\P_{\eps,T} \,R_T^{-1}$
 of the empirical process of increments under the Polaron measure satisfies a strong large deviation principle in $\Mcal_1(\Mcal_{\mathrm{si}}(\Omega_0))$ with rate function 
$$
\Q\mapsto g(\eps)- \bigg[\E^\Q\bigg(\eps \int_0^\infty \frac{\eps \e^{-\eps r}\,\d r}{|\omega(r)-\omega(0)|}\bigg)- H(\mathbb Q|\P)\bigg].
$$
Moreover any limit point of $\widehat\P_{\eps,T} \,R_T^{-1}$ (as $T\to\infty$) lies in $\mathfrak m_\eps$.
\end{lemma}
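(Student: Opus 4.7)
The plan is to derive Lemma \ref{lemma:tight} from the LDP of Lemma \ref{lemma:LDP} via a tilted large deviation argument in the spirit of Varadhan's lemma, after first rewriting the Polaron interaction as a functional of the empirical process of increments. A direct calculation, using the evenness of $x\mapsto 1/|x|$, the $\theta_s$-invariance encoded in \eqref{Rdef}, and the symmetrization already employed in Step 2 of the proof of Theorem \ref{thmain}, yields
\begin{equation*}
\frac{\eps}{2}\int_{-T}^T\int_{-T}^T \frac{\e^{-\eps|t-s|}}{|\omega(t)-\omega(s)|}\,\d s\,\d t \;=\; 2T\,\Phi_\eps(R_T) \,+\, o(T),
\end{equation*}
where $\Phi_\eps(\Q):=\E^\Q\bigl[\eps\int_0^\infty \e^{-\eps r}|\omega(r)-\omega(0)|^{-1}\,\d r\bigr]$, and the $o(T)$ error captures endpoint contributions and the periodization in \eqref{Rdef} (after matching $[-T,T]$ with $[0,2T]$ via time shift). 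This rewriting identifies $\widehat\P_{\eps,T}R_T^{-1}$ as an explicit Gibbs tilt of $\mathbb Q_T=\P R_T^{-1}$.

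The main obstacle is that $\Phi_\eps$ is neither bounded nor weakly continuous on $\Mcal_{\mathrm{si}}(\Omega_0)$ because of the Coulomb singularity, so Varadhan's lemma does not apply directly. To sidestep this I would approximate $1/|x|$ by the smoothed potential $V_\eta(x)=(|x|^2+\eta^2)^{-1/2}$ from \eqref{Y_eps} and write $\Phi_\eps^{\ssup{\eta}}$ for the corresponding bounded and weakly continuous functional. Varadhan's lemma applied to $\Phi_\eps^{\ssup{\eta}}$ against the LDP of Lemma \ref{lemma:LDP} produces a tilted LDP with rate function $g^{\ssup{\eta}}(\eps)-[\Phi_\eps^{\ssup{\eta}}(\Q)-H(\Q|\P)]$. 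The uniform entropy bound \eqref{E4.3} of Lemma \ref{lemma-Coulomb1} then controls $|\Phi_\eps-\Phi_\eps^{\ssup{\eta}}|$ on every sublevel set $\{H(\cdot|\P)\le C\}$, while the Khasminski-type computation in \eqref{est:entropy} supplies the complementary $\P$-side superexponential bound
\begin{equation*}
\limsup_{\eta\to 0}\,\limsup_{T\to\infty}\frac{1}{T}\log\E^\P\bigl[\exp\bigl(2T(\Phi_\eps-\Phi_\eps^{\ssup{\eta}})(R_T)\bigr)\bigr]\;=\;0.
\end{equation*}
Together with the exponential tightness already furnished by Lemma \ref{lemma:LDP}, these two ingredients let me pass to the limit $\eta\to 0$ and upgrade the rate function to the full $I(\Q)=g(\eps)-[\Phi_\eps(\Q)-H(\Q|\P)]$.

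Once the LDP is in hand, the second assertion is routine. By the defining formula \eqref{epsPekar} of $g(\eps)$ the rate $I$ is non-negative and its zero set is exactly $\mathfrak m_\eps$, so the upper bound in \eqref{eq-ldp-statement} forces every weak limit point of $\widehat\P_{\eps,T}R_T^{-1}$ in $\Mcal_1(\Mcal_{\mathrm{si}}(\Omega_0))$ to be supported on $\mathfrak m_\eps$. For the companion Proposition \ref{prop} I would then combine this concentration with the convexity of $\mathfrak m_\eps$ (which is the maximizing set of the concave functional $\Phi_\eps-H(\cdot|\P)$) and the observation that the barycenter $\E^{\widehat\P_{\eps,T}}[R_T]$ is the time-average of $\widehat\P_{\eps,T}\theta_s^{-1}$ over $s$, hence converges to $\widehat\P_\eps$ under the hypothesis $\widehat\P_{\eps,T}\Rightarrow \widehat\P_\eps$ from \cite{MV18}. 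Since $\mathfrak m_\eps$ is closed, this forces $\widehat\P_\eps\in \mathfrak m_\eps$.
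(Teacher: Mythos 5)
Your proposal is correct and follows essentially the same route as the paper: the paper also obtains the lemma by tilting the LDP of Lemma \ref{lemma:LDP} with the Coulomb functional of the empirical process, invoking Lemma \ref{lemma-Coulomb1} to handle the singularity, and then reads off concentration on $\mathfrak m_\eps$ from the upper bound together with compactness of the level sets of $H(\cdot|\P)$. The paper states this in two sentences; your write-up merely fills in the standard Varadhan-lemma approximation details (smoothing by $V_\eta$, the superexponential error bound via Khasminski), which is exactly the intended argument.
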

\begin{proof}
It follows from Lemma \ref{lemma:LDP} that $\lim_{T\to\infty}\frac 1 T\log Z_{\eps,T}=g(\eps)$ (we can again invoke Lemma \ref{lemma-Coulomb1} to control the unboundedness of the Coulomb potential in $\R^3$) 
and the required strong large deviation principle for $\widehat\P_{\eps,T} \,R_T^{-1}$ follows immediately. 
In particular, the upper bound of this large deviation implies that for any open neighborhood $U(\mathfrak m_\eps)$ of $\mathfrak m_\eps$, 
$\limsup_{T\to\infty} \frac 1 T\log\widehat\P_{\eps,T}[R_T\notin U(\mathfrak m_\eps)]<0$, which together with compactness of the level sets of $H(\cdot|\P)$ 
forces all limit points of $\widehat\P_{\eps,T} \,R_T^{-1}$ to be supported in $\mathfrak m_\eps$.
\end{proof}
\smallskip
\noindent{\bf{Proof of Proposition \ref{prop} and Theorem \ref{theorem}:}} Since the actual limit $\widehat\P_\eps=\lim_{T\to\infty} \widehat\P_{\eps,T}$ exists (\cite[Theorem 5.1]{MV18}) it follows, together with Lemma \ref{lemma:tight} that 
$\widehat\P_\eps\in \mathfrak m_\eps$. Theorem \ref{thmain} then concludes the proof of Theorem \ref{theorem}.\qed

\subsection{Proof of Theorem \ref{thm2}.} 

Recall the definition of the path measure $\widehat\P^{\ssup V}_{\eps,T}$ from \eqref{PV} defined w.r.t. any $V:\R^d\to \R$ which is rotationally symmetric, continuous and vanishes at infinity. 
For such a function $V$ in Theorem \ref{thm2} we also assume that the variational problem 
$$
\sup_{\heap{\psi\in H^1(\R^d)}{\|\psi\|_2=1}} \bigg[\int\int_{\R^{2d}} V(x-y) \psi^2(x) \psi^2(y) \d x \d y - \frac 12 \big\|\nabla \psi\big\|_2^2\bigg]
$$
admits a smooth maximizer $\psi^{\ssup V}$ which is unique, modulo translations in $\R^d$. For the proof of Theorem \ref{thm2}, we can essentially repeat the argument for proving Theorem \ref{thm1}, except that 
we can no longer appeal to \cite[Theorem 5.1]{MV18} to justify the existence of the thermodynamic limit 
\begin{equation}\label{thermo}
\lim_{T\to\infty} \widehat\P^{\ssup V}_{\eps,T}=: \widehat\P^{\ssup V}_\eps.
\end{equation}
The reason for this is that the method in our earlier article \cite{MV18} hinges upon the Gaussian representation 
$\frac 1 {|x|}= \sqrt{\frac 2\pi} \int_0^\infty \e^{-\frac{u^2|x|^2}2} \d u$ 
of the particular choice of the Coulomb potential
$V(x)=\frac 1 {|x|}$ in $d=3$. This representation is no longer available for a general interaction $V$ being considered in Theorem \ref{thm2}. However, in \cite{M17} we considered Gibbs measures with translation invariance of the form 
$$
\d\widehat\P^{\ssup{H}}_{\alpha,T}:= \frac 1 {Z_T} \exp\bigg\{ \alpha \int_{-T}^T\int_{-T}^T H(t-s, \omega(t)-\omega(s)) \d s \d t\bigg\} \d \P
$$
defined w.r.t. the law $\P$ of increments of $d$-dimensional Brownian paths. In the main result of \cite{M17} it has been shown that for any coupling parameter $\alpha>0$, the thermodynamic limit 
$\lim_{T\to\infty} \widehat\P^{\ssup{H}}_{\alpha,T}$ exists and a central limit theorem also holds for $\lim_{T\to\infty} \widehat\P^{\ssup{H}}_{\alpha,T}\big[\frac{\omega(T)-\omega(-T)}{\sqrt{2T}}\in \cdot\big]$, provided that 
the interaction $H$ satisfies 
\begin{equation}\label{H}
\sup_{x\in \R^d} | H(t,x) | \leq C(1+|t|)^{-\gamma} \qquad\mbox{for some} \,\, \gamma>2, \qquad\mbox{and } C\in (0,\infty)
\end{equation}
In the context of Theorem \ref{thm2}, we have 
$$
H(t,x)=\eps \e^{-\eps |t| }V(x) \qquad\mbox{with } \|V\|_\infty \leq C\in (0,\infty).
$$
Clearly, the above choice satisfies the desired requirement \eqref{H} for any fixed $\eps>0$ and 
we can appeal to \cite{M17} to justify \eqref{thermo}. Given the existence of the limit $\widehat\P^{\ssup V}_\eps$ in 
\eqref{thermo} and repeating the large deviation arguments leading to Proposition \ref{prop} (with $V(\cdot)$ replacing the Coulomb potential $1/|\cdot|$), we have 
that $\widehat\P^{\ssup V}_\eps$ is a maximizer of the variational problem \eqref{epsPekar}, again with $V(\omega(t)-\omega(0))$ replacing the Coulomb potential $\frac 1 {|\omega(t)-\omega(0)|}$ therein. Now we can exactly repeat the arguments for the proof of Theorem \ref{thm1}
to conclude the proof of Theorem \ref{thm2} as well. Certainly, owing to the boundedness of $V$ some arguments now simplify (e.g. the truncation arguments needed  to handle the singularity of the Coulomb potential carried out in Lemma \ref{lemma-Coulomb1} will no longer be required for a bounded function $V$). 

{\bf{Acknowledgement.}} The authors are very grateful to Herbert Spohn for helpful discussions and comments on a first draft of the manuscript.


\begin{thebibliography}
\bibliographystyle{}



\bibitem{BKM15}
{\sc E. Bolthausen}, {\sc W. K\"onig} and {\sc C. Mukherjee.}
\newblock Mean field interaction of Brownian occupation measures, II.: Rigorous construction of the Pekar process. 
\newblock{\it Comm.~Pure Appl.~Math.}, {\bf 70} 1598-1629, (2017), arXiv: 1511.05921


 


 
 \bibitem{DV4}
{\sc M.D.~Donsker} und {\sc S.R.S.~Varadhan.}
\newblock Asymptotic evaluation of certain Markov process expectations for large time. IV,
\newblock {\it Comm.~Pure Appl.~Math.} {\bf 36} (1983), no. 2, 183?212









\bibitem{DV83}
{\sc M.D.~Donsker} and {\sc S.R.S.~Varadhan.}
\newblock Asymptotics for the Polaron,
\newblock {\it Comm.~Pure Appl.~Math.}, 505-528 (1983).




\bibitem{DS19}
{\sc W.~Dybalski} and {\sc H.~Spohn.}
\newblock{Effective Mass of the Polaron- Revisited}
\newblock{\it Annales Henri Poincar\'e} {\bf 21}, 1573- 1594 (2020).



\bibitem{F72}
{\sc R. Feynman.}
\newblock{Statistical Mechanics}. 
\newblock{Benjamin}, Reading (1972).






\bibitem{KM15}
{\sc W. K\"onig} and {\sc C. Mukherjee.}
\newblock{Mean-field interaction of Brownian occupation measures. I: Uniform tube property of the Coulomb functional.}
\newblock{\it Ann. Inst. H. Poincar\'e Probab. Statist}, {\bf 53}, 2214-2228, (2017), arXiv: 1509.06672





\bibitem{L33}
{\sc L.D. Landau.}
\newblock{Electron motion in crystal lattices.}
{\textit{Phys. Z. Sowet.}}, {\bf 3}, (1933), 664-665







\bibitem{L76}
{\sc E.H.~Lieb.}
\newblock Existence and uniqueness of the minimizing solution of Choquard`s nonlinear equation. 
\newblock{Studies in Appl. Math.} {\bf 57}, 93-105 (1976).




\bibitem{LS19}
{\sc E.H.~Lieb} and {\sc R.~Seiringer.}
\newblock{Divergence of the effective mass of a polaron in the strong coupling limit.}
\newblock{\it Journal of Statistical Physics}, (2019), DOI: https://doi.org/10.1007/s10955-019-02322-3



\bibitem{LT97}
{\sc E.H.~Lieb} and {\sc L. Thomas.}
\newblock{ Exact ground state energy of the strong-coupling Polaron}
\newblock{\it Comm. Math. Phys.}, {\bf 183} 511-519, (1997)



\bibitem{M17}
{\sc C. Mukherjee.}
\newblock{Central limit theorem for Gibbs measures including long range and singular interactions and homogenization of the stochastic heat equation}
\newblock{\it Annals of Applied Probability.}, to appear,  arXiv: 1706.09345, (2017)




\bibitem{MV14}
{\sc C. Mukherjee} and {\sc S. R. S. Varadhan.}
\newblock{Brownian occupations measures, compactness and large deviations.}
\newblock{\it Ann. Probab.}, {\bf 44} 3934-3964, (2016), arXiv: 1404.5259



\bibitem{MV18}
{\sc C. Mukherjee} and {\sc S. R. S. Varadhan.}
\newblock{Identification of the Polaron Measure I: Fixed Coupling Regime and the Central Limit Theorem for Large Times}
\newblock{\it Comm.~Pure Appl.~Math.}, {\bf 73} 350-383, (2020), arXiv: 1802.05696




\bibitem{P49}
{\sc S. I. Pekar.}
\newblock {Theory of Polarons.} 
{\textit{Zh. Eksperim. i Teor. Fiz.}}, {\bf 19}, 1949



\bibitem{S87}
{\sc H.~Spohn.}
\newblock Effective mass of the polaron: A functional integral approach. 
\newblock{\it Ann. Phys.}  {\bf 175}, 278-318 (1987).


 \end{thebibliography}
 \end{document}